\documentclass[11pt]{article}
\usepackage[margin=1in]{geometry}
\usepackage{amsmath,amssymb,amsthm,mathtools}
\usepackage{enumitem}
\usepackage{float}
\usepackage{tikz}
\usetikzlibrary{decorations.pathreplacing}
\usepackage[colorlinks=true,linkcolor=blue,citecolor=blue,urlcolor=blue]{hyperref}
\newtheorem{theorem}{Theorem}[section]
\newtheorem{proposition}[theorem]{Proposition}
\newtheorem{lemma}[theorem]{Lemma}
\theoremstyle{remark}
\newtheorem*{remark}{Remark}
\newcommand{\E}{\mathbb E}
\newcommand{\Pp}{\mathbb P}
\newcommand{\one}{\mathbf 1}
\newcommand{\tcov}{\tau_{\mathrm{cov}}}
\newcommand{\dd}{\,\mathrm d}
\newcommand{\cF}{\mathcal F}
\newcommand{\cC}{\mathcal C}
\newcommand{\cU}{\mathcal U}
\newcommand{\z}{\mathbf{0}}
\title{Precise cover times for branching random walks on Hamming graphs:
(iterated) logarithmic corrections}
\author{Zhenyuan Zhang\thanks{Department of Mathematics, Stanford University.
Email: \href{mailto:zzy@stanford.edu}{\texttt{zzy@stanford.edu}}.}}
\date{July 18, 2026}
\hypersetup{
  pdftitle={Precise cover times for branching random walks on Hamming graphs:
(iterated) logarithmic corrections},
  pdfauthor={Zhenyuan Zhang},
  pdfsubject={Precise cover times for branching random walks on Hamming graphs:
(iterated) logarithmic corrections},
  pdfkeywords={Yule process, first-passage asymptotics,
  hitting probabilities, genealogical dependence}
}

\begin{document}
\maketitle

\begin{abstract}
We prove tight asymptotics of the cover time $\tau_{\mathrm{cov}}(d)$ of a continuous-time branching random walk on
the Hamming graph $\{0,1,\dots,b-1\}^d$,  as $d\to\infty$. We focus on the slow-branching regime, where particles move at rate one and branch at rate $\lambda\in(0,1)$. For $b>2$, we show that $\tau_{\mathrm{cov}}(d)=x_\star d+\lambda^{-1}\log d+O_{\mathbb P}(1)$. For $b=2$, we show that $\tau_{\mathrm{cov}}(d)=x_\star d+\chi^{-1}\log\log d+O_{\mathbb P}(1)$. Here, $x_\star$ and $\chi$ are explicit positive constants depending only on $b$ and $\lambda$. 
Our results sharpen previously known linear-order estimates. The dichotomy reflects the geometry of the last uncovered region: for $b>2$, there are exponentially many antipodes, whereas the binary hypercube has a unique antipode and its neighbors govern the final coverage. Our proofs combine classic spine change of measure techniques and many-to-few estimates with a multiscale decomposition of the genealogy and a weighted martingale analysis of the early population. 

\end{abstract}

\noindent\textbf{Keywords.}
Yule process, first-passage asymptotics, hitting probabilities,
genealogical dependence, hypercube.

\medskip
\noindent\textbf{2020 Mathematics Subject Classification.}
Primary 60J80; secondary 60J27.

\setcounter{tocdepth}{3}
\tableofcontents
\section{Introduction}\label{sec:introduction1}

\subsection{Model and main results}\label{sec:results1}

Fix an integer $b\ge2$. A continuous-time branching random walk on the Hamming graph $\{0,1,\ldots,b-1\}^d$ can be described as follows. Start from a single particle at the origin $\z$. Each alive particle independently performs continuous-time nearest-neighbor random walk with rate one. At the same time, it independently splits
into two particles at the same location with rate $\lambda$.  Write $V_t$ for the particles alive at time $t$,
$X_u(t)\in\{0,1,\ldots,b-1\}^d$ for the location of $u\in V_t$, and
$\Pp$ and $\E$ for probability and expectation when the process starts
from $\z$. 

In this paper, we focus on the slow-branching case $\lambda\in(0,1)$. For notational convenience, we let $\beta= b/(b-1)$. 
For $y\in\{0,1,\ldots,b-1\}^d$, define the \textit{first-passage time} to $y$ as
\[
 \tau_y=\inf\{t\ge0:\ X_u(t)=y\text{ for some }u\in V_t\}.
\]
The \textit{cover time} is defined as the maximum of first-passage times over all vertices:
\begin{equation}\label{eq:cover3}
 \tcov(d)=\max_{y\in\{0,1,\ldots,b-1\}^d}\tau_y .
\end{equation}

Our main goal is to study asymptotics of the cover times as $d\to\infty$. To state our main results, we need to define some preliminary constants. 
Put\footnote{This definition of $\Phi$ is related to the expected particle counts at an antipode vertex; see \eqref{eq:Phi} below.}
\begin{equation}\label{eq:rate1}
 \Phi(x)=\lambda x-\log b+\log(1-e^{-\beta x}).
\end{equation}
Since $\Phi'(x)>0$, $\Phi(x)\to-\infty$ as $x\downarrow0$, and
$\Phi(x)\to\infty$ as $x\to\infty$, there is a unique $x_\star>0$ with
$\Phi(x_\star)=0$.  Define
\begin{equation}\label{eq:constants1}
 \chi=\Phi'(x_\star)
 =\lambda+\frac{\beta e^{-\beta x_\star}}{1-e^{-\beta x_\star}}.
\end{equation}
We are now ready to present the main results of this paper.

\begin{theorem}\label{thm:cover1}
Let $b>2$ and $\lambda\in(0,1)$.  Then
\begin{equation}\label{eq:cover1}
 \tcov(d)=x_\star d+\lambda^{-1}\log d+O_{\Pp}(1).
\end{equation}
\end{theorem}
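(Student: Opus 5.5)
We reduce the cover time to first-passage times of the vertices at maximal Hamming distance. A vertex $y$ at distance $k$ from $\z$ is reached no later, up to $O_{\Pp}(1)$, than a vertex at distance $d$. So the main work is on the set $A_d$ of antipodes, the vertices with all coordinates nonzero. There are $(b-1)^d$ of them.

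\textbf{Single-vertex tail.} First we compute the expected number of particles at a fixed antipode $y$ at time $t$. Coordinates evolve independently; each coordinate changes to a uniform other value at rate $1/d$. A single coordinate starting at $0$ is nonzero at time $t$ with probability $(b-1)b^{-1}(1-e^{-\beta t/d})$. Hence
\[
\E\,\#\{u\in V_t: X_u(t)=y\}=e^{\lambda t}\,b^{-d}(1-e^{-\beta t/d})^d=\exp\bigl(d\,\Phi(t/d)\bigr).
\]
Write $t=x_\star d+s$. Then $d\Phi(t/d)=\chi s+O(s^2/d)$, which gives expected occupation $\approx e^{\chi s}$.

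However, the hitting probability is not the occupation. Near time $x_\star d$, hitting $y$ is a rare-event question driven by the genealogy. A spine change of measure shows that, conditioned on hitting $y$ at time $t$, the spine's ancestors branch at rate $2\lambda$, and the typical family is large. To get $\Pp(\tau_y\le t)$, we divide $\E[\text{occupation}]$ by the expected occupation given a hit. The latter is of order the sojourn size: the clan that arrives at $y$ keeps growing at rate $\lambda$ while staying within $O(1)$ of $y$ for time $O(d)$. This normalization is exactly what produces the $\lambda^{-1}\log d$ term.

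Concretely, we aim to prove, for fixed $s$ and uniformly over antipodes $y$,
\[
\Pp(\tau_y\le x_\star d+\lambda^{-1}\log d+s)\asymp (b-1)^{-d}e^{c s},
\]
via first and second moments of a truncated count. The count is the number of particles that reach $y$ for the first time in a window, weighted by $e^{-\lambda\cdot(\text{time since arrival})}$, with good trajectories only. A many-to-one computation gives the first moment. A many-to-two computation over the split time of two spines gives the second moment.

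\textbf{Upper bound.} For $t=x_\star d+\lambda^{-1}\log d+K$, a union bound over all $b^d$ vertices gives $\Pp(\tcov>t)\le\sum_y\Pp(\tau_y>t)$. This requires a lower-tail bound $\Pp(\tau_y>t)$ that is summably small, of order $e^{-cK}b^{-d}$. Once the population exceeds $e^{\lambda t}$, the non-hitting events are nearly independent across the $\gg b^d$ families, so the probability is doubly exponentially small.

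We carry this out as follows. Run the process until a deterministic time $t_0=\lambda^{-1}(d\log b)+O(d)$ at which there are at least $\epsilon e^{\lambda t_0}$ particles, approximately spread according to the walk law; the martingale $W_t=e^{-\lambda t}|V_t|$ converges to a positive limit. Each of these particles independently hits $y$ in the remaining time with some probability $p$. Then $\Pp(\tau_y>t)\le \E[(1-p)^{|V_{t_0}|}]$, and the $O(1)$ from $W_\infty$ is absorbed into $O_{\Pp}(1)$. Doing this uniformly over $y$ at all distances, not just antipodes, requires that closer vertices are easier to hit. We prove this by a coupling on the distance process.

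\textbf{Lower bound and main obstacle.} For $t=x_\star d+\lambda^{-1}\log d-K$, we show with high probability that some antipode is unhit. Let $N=\#\{y\in A_d:\tau_y>t\}$. Then $\E N\ge(b-1)^d(1-\Pp(\tau_y\le t))\to$ a large quantity times $(1-o(1))$. We need $\Pp(N=0)\to0$ as $K\to\infty$. The mean is only useful when $\Pp(\tau_y\le t)$ is bounded away from 1, so we instead count hit antipodes: $M=\#\{y\in A_d:\tau_y\le t\}$, with $\E M\le(b-1)^d e^{-cK}$. The problem is that $M$ is not concentrated because of genealogical dependence. A single early lineage that runs ahead covers many antipodes.

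Our plan is a multiscale decomposition of the genealogy. Condition on the first $\delta d$ time units, then apply Markov's inequality to $\E[M\mid\cF_{\delta d}]$, showing it is $o((b-1)^d)$ with high probability. We control $\E[M\mid\cF_{\delta d}]$ through the weighted martingale $\sum_{u}e^{-\lambda\delta d}\,\theta^{|X_u(\delta d)|}$, with the exponential tilt $\theta$ chosen to match the optimal trajectory toward antipodes. We expect this step to be the hardest: we must bound the fluctuations of the weighted martingale of the early population so that no atypically advanced early subtree destroys the $\lambda^{-1}\log d$ correction. The final claim then follows from the observation that $M<(b-1)^d$ implies $\tcov>t$.
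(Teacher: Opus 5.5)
There is a genuine gap at the foundation: your single-vertex estimate is false, and with it your explanation of where $\lambda^{-1}\log d$ comes from. At time $x_\star d$ the expected number of particles at a fixed antipode is exactly $e^{d\Phi(x_\star)}=1$. The many-to-two formula, which factorizes over coordinates, gives $\E_x[N_y(x_\star d)^2]\ll\E_x[N_y(x_\star d)]^2$ uniformly in $x,y$. Paley--Zygmund then yields $\Pp_x(\tau_y\le x_\star d)\ge p_0>0$ for \emph{every} start $x$ and target $y$ (Lemma~\ref{lem:cluster1}). So $\Pp(\tau_y\le x_\star d+\lambda^{-1}\log d+s)$ tends to $1$; it is not $\asymp(b-1)^{-d}e^{cs}$. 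There is also no ``sojourn'' normalization: a particle at $y$ moves away from $y$ at rate about one, rather than staying within $O(1)$ of it for time $O(d)$.

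The correction is a coupon-collector effect. Each independent cluster hits any given vertex within time $x_\star d$ with probability at least $p_0$, so one needs $\Theta(d)$ independent clusters to make every miss probability $o(b^{-d})$. The Yule population is about $Wde^{\lambda C}$ precisely at time $\lambda^{-1}\log d+C$. That is the paper's whole upper bound: condition on $\cF_{\lambda^{-1}\log d+C}$ on $\{W\ge w\}$, bound each miss probability by $\exp(-p_0wde^{\lambda C}/2)$, and union-bound over all $b^d$ vertices. No reduction to antipodes and no monotone coupling is needed. Your version waits until a time linear in $d$ to have $b^d$ families and leaves the per-particle probability $p$ unspecified. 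It rests on the wrong picture, and nothing in it pins down the $\lambda^{-1}\log d$ term.

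The lower-bound plan fails for the same reason. By time $t=x_\star d+\lambda^{-1}\log d-K$, almost every antipode \emph{is} hit. Conditioning at time $\lambda^{-1}\log d-K$ and applying Lemma~\ref{lem:cluster1} to each cluster gives $\E[(b-1)^d-M\mid\cF_{\lambda^{-1}\log d-K}]\le(b-1)^d(1-p_0)^{|V_{\lambda^{-1}\log d-K}|}$, an exponentially small fraction on $\{W\ge w\}$. Hence $\E M\le(b-1)^de^{-cK}$ is false, and no Markov bound on $\E[M\mid\cF_{\delta d}]$ can give $M<(b-1)^d$. Conditioning at a linear time, when the population is already exponentially large, also misses the relevant scale. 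What must be shown is a second-moment occupancy statement: among exponentially many well-chosen candidates, at least one of the individually rare survivors persists.

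The paper does this as follows.
\begin{itemize}
\item It freezes the population at $s_{B,d}=\lambda^{-1}\log d-B$, when there are about $de^{-\lambda B}$ particles and at most $\epsilon d$ touched coordinates.
\item It picks targets using least-saturated directions, so that $\sum_u\vartheta^{L_u}\ll d$ for some $\vartheta\in(R,1+\lambda)$ (Proposition~\ref{prop:load1}). This interval is nonempty by Lemma~\ref{lem:ratio1}.
\item It packs these targets into an $e^{hd}$-size, $\delta d$-separated set.
\item It uses Lemma~\ref{lem:endpoint1}(i) to show most targets survive until a final window of length $\kappa\log d$, with $1/\chi<\kappa<1/\lambda$.
\item In that window, separation means each truncated cluster hits at most one target. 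This makes the unhit indicators negatively correlated, and Lemma~\ref{lem:boxes1} finishes.
\end{itemize}
Treating $A_d$ symmetrically would not work. A typical antipode already agrees with each early particle on about $\log d/(\lambda(b-1))$ coordinates. Then $\sum_u\vartheta^{d-d_{\rm H}(X_u(s_{B,d}),y)}$ is polynomially larger than $d$, and the final occupancy bound breaks down. The least-saturated directions exist precisely to prevent this.
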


\begin{theorem}\label{thm:cover2}
Let $b=2$ and $\lambda\in(0,1)$.  Then
\begin{equation}\label{eq:cover2}
 \tcov(d)=x_\star d+\chi^{-1}\log\log d+O_{\Pp}(1).
\end{equation}
\end{theorem}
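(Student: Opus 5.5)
We prove Theorem~\ref{thm:cover2} by a first/second moment analysis of the number of particles sitting at vertices near the antipode $\mathbf 1=(1,\dots,1)$. The decisive heuristic is a computation of $\E[\#\{u\in V_t: X_u(t)=y\}]$. Coordinates evolve independently, and a coordinate that starts at $0$ differs from $0$ at time $t$ with probability $(1-e^{-\beta t/d})/\beta$ when the walk moves at total rate one. Hence for $y$ at Hamming distance $k$ from $\z$, and $t=xd$,
\[
\E N_t(y)=e^{\lambda t}\,b^{-d}\bigl(1-e^{-\beta x}\bigr)^{k}\bigl(1+(b-1)e^{-\beta x}\bigr)^{d-k}.
\]
At the antipode ($k=d$) this equals $e^{d\Phi(x)}$. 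At a vertex at distance $d-j$ it is $e^{d\Phi(x)}\rho^{j}$ with $\rho=(1+e^{-\beta x_\star})/(1-e^{-\beta x_\star})>1$ (for $b=2$), so the antipode is the hardest vertex. Writing $t=x_\star d+s$ gives $\E N_t(\mathbf 1)\approx e^{\chi s}$.

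The key structural point is that for $b=2$ there is a unique antipode, but it has only $d$ neighbors. Conditional on the early genealogy, the arrivals at $\mathbf 1$ are roughly Poisson with mean $W e^{\chi s}$, where $W$ is the limit of a weighted (additive-type) martingale of the early population. The catch is that $W$ has a heavy lower tail coming from the initial slow-branching phase. A naive guess would put $\tcov=x_\star d+O(1)$, so the $\log\log d$ correction must come from a near-degeneracy. My plan is to identify it as follows. The last vertex to be covered is chosen among the $\binom{d}{j}$ vertices at distance $d-j$, and their hitting events are driven by which early subtrees survive in which coordinate directions. Balancing $d^{j}$ choices against $e^{-\chi s}\rho^{-j}\cdots$ over $j$ up to about $\log d/\log\log d$ produces the scale $s\approx\chi^{-1}\log\log d$.

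The steps are as follows.
\begin{enumerate}[label=(\roman*)]
\item Establish the many-to-one formula above, together with a many-to-two bound via the spine change of measure. The bound should show that $N_t(y)$, conditional on $\cF_{r}$ for a mesoscopic time $r$, concentrates around $\sum_{u\in V_r}\E_{X_u(r)}N_{t-r}(y)$.
\item \textbf{Upper bound.} Take $t=x_\star d+\chi^{-1}\log\log d+K$ and apply a union bound over all $y$, grouped by distance. Use the conditional Poisson-type estimate $\Pp(N_t(y)=0\mid\cF_r)\le\exp(-c\,M_r(y)e^{\chi s})$, where $M_r(y)$ is a weighted martingale depending on the early displacements relative to $y$. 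Then bound $\E\sum_y\exp(-c M_r(y)e^{\chi s})$ using lower-tail estimates on $M_r$.
\item \textbf{Lower bound.} Exhibit, with probability bounded below, a vertex near $\mathbf 1$ that is unvisited at $t=x_\star d+\chi^{-1}\log\log d-K$. Use a second moment over a family of about $\log d$ candidate vertices, chosen so that the early population is unfavorably displaced relative to them.
\end{enumerate}

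The main obstacle is step~(ii)/(iii): obtaining matching sharp lower-tail estimates for the weighted early-population martingale uniformly over $y$. This requires handling the correlations between $M_r(y)$ for different $y$, which share a genealogy. My first approach will be a multiscale decomposition of the genealogy (splitting at branching generations $1,2,\dots$). I will show that the dominant way $W$ becomes small is for the first $\log\log d$-many branchings to be delayed, each contributing a factor $e^{-\chi\cdot}$. If this route succeeds, it would pin down the constant $\chi^{-1}$.
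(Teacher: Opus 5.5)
Your first-moment computation is correct, but you misidentify where the $\log\log d$ term comes from, and both halves of the plan rest on that misidentification. The limit $W$ of the weighted early martingale (in the paper, $M_R(\infty)$) is a fixed, almost surely positive random variable that does not depend on $d$. Its lower tail only affects the $O_{\Pp}(1)$ window. An event such as ``the first $\log\log d$ branchings are delayed'' has probability tending to zero, so it cannot produce a correction that holds in probability. The correction is a coupon-collector effect, and the naive count already sees it once the neighbors of the antipode are included. At time $x_\star d+s$, a vertex with $\ell$ zeros receives roughly $\operatorname{Pois}(cR^\ell e^{\chi s})$ arrivals, nearly independently across vertices. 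So the expected number of missed vertices is about $\sum_\ell\binom d\ell\exp(-cR^\ell e^{\chi s})$. This sum is dominated by bounded $\ell$ and is of order one exactly when $e^{\chi s}\asymp\log d$. The correct balance is against $\exp(-cR^\ell e^{\chi s})$, not against $e^{-\chi s}\rho^{-j}$, and the claim that a naive guess gives $x_\star d+O(1)$ is the misstep.

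As a result, your lower bound fails quantitatively. At $t=x_\star d+\chi^{-1}\log\log d-K$, each near-antipodal candidate is missed with probability about $d^{-c_\ell e^{-\chi K}}$. A family of about $\log d$ candidates therefore has expected number of misses tending to zero. Choosing candidates where the early population is ``unfavorably displaced'' does not help: on an event of probability close to one, every vertex with boundedly many zeros has intensity of order $\log d$, which is exactly what Lemma~\ref{lem:binary3} yields. You need a family of size $d^{1-o(1)}$. The paper takes the $d$ vertices with a single zero, and the real difficulty is then the second moment, because these targets are at mutual distance $2$ and one cluster can hit several. The paper handles this in three steps: it splits at $r_d=T-\kappa\log\log d$ with $\kappa>\chi^{-1}$, so that almost all targets are unvisited at $r_d$ by Lemma~\ref{lem:endpoint1}(i); it bounds within-cluster pairs in the short final window with the factorial-moment estimate of Lemma~\ref{lem:cluster2}; and it uses a good/bad-pair covariance decomposition. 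For the upper bound, your estimate $\exp(-cM_r(y)e^{\chi s})$ has the right shape, but uniformity in $y$ needs no sharp lower-tail analysis. On the event that no coordinate is repeated, $\sum_vR^{d-d_{\rm H}(X_v(s),y)}\ge Z_R(s)R^{\ell-2H_s\sqrt\ell}$ with $H_s$ tight, and $\Pp(M_R(s)\ge w)\ge1-\epsilon$ is enough. What your plan does miss is the far shells. With a fixed conditioning time $r$, the best available bound $\exp(-c|V_r|)$ cannot beat $\binom d\ell$ once $\ell\log(d/\ell)\gg|V_r|$. The conditioning time must therefore grow with $\ell$; the paper uses $\delta\ell$ together with the uniform bound of Lemma~\ref{lem:binary2}(ii).
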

In the remainder of the introduction, we discuss the motivation of this problem as well as relevant literature (Section \ref{sec:motivation1}) and heuristics that lead to the main asymptotics (Section \ref{sec:targets1}), and finally provide a proof sketch (Section \ref{sec:proof1}).
\subsection{Motivation and related work}\label{sec:motivation1}

In this section, we describe the motivation of the work and related literature from four aspects.

\paragraph{Cover times before mixing.}
The classic result of Aldous
\cite[Theorem~1.12; see also Section~7(A)]{AldousCube} shows that the cover time $\tau_{\rm cov}^{\rm RW}(d)$ of the simple random walk on the Hamming graph $\{0,1,\ldots,b-1\}^d$ satisfies
\[
 \frac{\tau_{\rm cov}^{\rm RW}(d)}{b^d\log(b^d)}
 \xrightarrow[d\to\infty]{L^1}1.
\]
Aldous' proof employs a coupon-collector argument after mixing. Under our
total-rate-one convention, the corresponding Hamming-graph walk has a
total-variation cutoff centered at
$\frac{b-1}{2b}\,d\log d$
with a window of order $d$ \cite[Theorem~5]{HoraHamming}.  When $b=2$,
this is the classic $(d\log d)/4+O(d)$ cutoff result of Diaconis, Graham, and
Morrison \cite{DiaconisGrahamMorrison}. 

Our results can thus be regarded as a variation of Aldous' theorem with branching.
Both works apply coupon-collecting arguments, but the techniques are different. The work \cite{AldousCube} relies on the near-independence of the walk far-enough in time by mixing, thus realizing it as an independent coupon-collecting problem. 
However, in our case, the branching population covers the hypercube in time of order $d$, well before the trajectory of a single particle has mixed. Instead, we create independence by bootstrapping the branching process, before identifying a coupon-collecting structure.

For $b=2$, Matthews proved a Gumbel limit theorem for
$\tau_{\rm cov}^{\rm RW}(d)$ \cite{MatthewsCube}. Precise cover-time
asymptotics have also been obtained for random subgraphs of the hypercube
above the connectivity threshold \cite{CooperFriezePegden}. Cover-time bounds
and speedup estimates are also known for independent parallel walks on the
hypercube and broader classes of graphs
\cite{ParallelWalks,ElsasserSauerwald,MultipleWalks}. 


\paragraph{Branching particle systems on sequence spaces.}

In recent years, many studies have focused on branching particle systems on the hypercube or sequence spaces, thanks to connections to evolution processes in biology and ecology \cite{BMW,BZ,KoenigSurvey}. For instance, the branching random walk on the hypercube serves as a neutral mutation--reproduction process on
sequence space, without selection or a fitness landscape, and the cover time represents the first time all sequences are seen.

The closest result to ours is \cite[Corollary~4]{BZ}, where it is shown that $\tcov(d)\asymp d$ in probability. In other words, the cover time has linear order, but even its leading constant remains unknown \cite[Section~5.2(ii)]{BZ}.  In a similar vein, COBRA walks give another
model of branching exploration and information spread on graphs, where cover times measure the completion of dissemination of information. For COBRA walks, branching is
coupled with moving, and particles coalesce when they meet. This creates difficulty since the processes no longer have independent genealogical clusters. 

Balelli, Mili\v{s}i\'c, and Wainrib obtained bounds on partial cover times for
branching models on binary sequence spaces \cite{BMW}.\footnote{Here, the partial cover time refers to the time needed to visit a fixed positive fraction of the state space.}  For COBRA walks,
both partial and full cover times have been studied, with upper and lower bounds established 
\cite{CobraOriginal,CobraExpanders,CobraBounds}. However, we are not aware of results that provide sharp asymptotics of cover times for branching particle systems on sequence spaces.

\paragraph{First passage and extrema of branching random walks.}
Much of the classical theory of branching random walks studies the frontier on the real line, described by the largest or smallest particle in
a generation \cite{AddarioBerryReed,Aidekon}.  However, there is no ordering structure on general state spaces such as $\mathbb{R}^d$. On those spaces, first-passage times describe similar extremal behavior to one-dimensional maxima.  First-passage asymptotics
for branching random walks on $\mathbb{R}^d$ have been studied in
\cite{BCMZ,BZEuclideanTight}.
In this paper, we focus on the maximum of the first-passage times,
namely the cover time $\tcov(d)$ defined in \eqref{eq:cover3}. The
difficulty is thus to control the
first-passage times of all $b^d$ vertices simultaneously.

Cover times for other branching models are sparse in the literature and depend heavily on the underlying
space.  Benjamini and Kozma \cite{BenjaminiKozma} considered a branching random walk with $\Delta$ descendants at each step on the $\Delta$-regular tree, and proved that it covers
by time $n$ a complete subtree of height $n-O(\log n)$.  Roberts \cite{Roberts} later proved that the time to cover a ball of
radius $r$ on the regular tree is almost surely
$r+2\log\log r/\log(3/2)+o(\log\log r)$ (observe the iterated logarithmic correction term similar to that in Theorem \ref{thm:cover2}). For a
tree-indexed walk on a discrete torus of side length $N$ and fixed dimension
$D\ge5$,
Zhu proved concentration of the cover time at
$N^D\log(N^D)/\operatorname{BCap}(\{0\})$, where
$\operatorname{BCap}(\{0\})$ is the branching capacity of one vertex in that
model \cite{ZhuTori}.  Zhu's genealogy is a critical Galton--Watson tree
conditioned on its size, rather than the supercritical tree used
here.

\paragraph{Comparison with one-dimensional extrema.}
Sharp results for one-dimensional branching random walks show that 
the extremal particle is located at a linear distance in time with a logarithmic correction, and a na\"{i}ve first moment computation predicts incorrect asymptotics \cite{AddarioBerryReed,Zeitouni}. The logarithmic correction term is connected to modified second moment techniques and appears in related works on maximum modulus \cite{Mallein} and first-passage times \cite{BZEuclideanTight}.

On the other hand, our cover time analysis reveals a fundamentally different regime for the correction term, which will be explained in
Section~\ref{sec:targets1}. Our correction term features a dichotomy: logarithmic if $b\geq 3$ and iterated logarithmic if $b=2$. Roughly speaking, the radius of the branching random walk grows roughly linearly, and the leading linear term corresponds to the first time the antipode is hit, but this does not mean the entire hypercube has been covered. The correction term corresponds to the extra time it takes to reach \textit{all} vertices. 

\subsection{Heuristics of the asymptotics}
\label{sec:targets1}

In this section, we derive the asymptotics of \eqref{eq:cover1} and \eqref{eq:cover2} heuristically through first-moment asymptotics. We start by deriving the leading order $x_\star d$, followed by the (iterated) logarithmic correction terms.  We say two vertices on the Hamming graph are \textit{antipodal} if they differ in every coordinate, and a vertex is an \textit{antipode} if it is antipodal to the origin $\z$. The number of antipodes is exponential in $d$ if $b\geq 3$, and the antipode is unique if $b=2$.

The key idea is that once the particle system reaches a fixed antipode, it will soon cover the Hamming graph. The remaining time can be explained by vertices close to the antipodes.

\paragraph{The leading term $x_\star d$.}  Fix an antipode $y$.  Let
\[
 N_y(t)=\#\{u\in V_t:X_u(t)=y\}
\]
be the number of particles at $y$ at time $t$.  A standard first-moment computation shows that at time $xd$, its
expectation is exactly
\begin{align}
    \E[N_y(xd)]=\exp(d\Phi(x)).\label{eq:Phi}
\end{align}
Since $\Phi(x_\star)=0$, $x_\star d$ is the first-moment threshold at which the expected antipodal
occupancy is of order one.

\paragraph{The logarithmic correction term $\lambda^{-1}\log d$ for $b>2$.} 

For a fixed $a\in\mathbb R$, consider a bootstrapping procedure where we run the process until time 
$\lambda^{-1}\log d+a$, producing roughly 
$de^{\lambda a}$ many particles.  Their descendant clusters are independent, and
each has a uniformly positive probability of reaching a prescribed
target during the next $x_\star d$ units of time. Therefore, for some $c>0$, a fixed target is unhit at time $x_\star d+\lambda^{-1}\log d+a$ with probability around 
$\exp(-cde^{\lambda a})$.  Summing over the $b^d$ targets gives a non-coverage probability of around
\[
 \exp(d(\log b-ce^{\lambda a})).
\]
Therefore, changing the constant $a$ within a window of order $O(1)$ already changes the order of the heuristic coverage probability. This leads to \eqref{eq:cover1}.

\paragraph{The iterated logarithmic correction term $\chi^{-1}\log\log d$ for $b=2$.} 
Expanding $\Phi$ around $x_\star$ shows that, if $t=x_\star d+s$ and $s=o(\sqrt d)$, then
\[
 d\Phi\left(x_\star+\frac{s}{d}\right)
 =\chi s+O\left(\frac{s^2}{d}\right).
\]
Thus, for targets that are close to the antipode, the expected occupancy grows roughly like $e^{\chi s}$. In the binary hypercube, there are $d$ vertices adjacent to the antipode. At time
$x_\star d+s$, the number of arrivals at each of the $d$ vertices is approximately
Poisson with mean $ce^{\chi s}$, where $c>0$ is fixed. Therefore, the expected number of unvisited vertices in this
collection is approximately $d\exp(-ce^{\chi s}).$ Equating this with one yields
\[
 d\exp(-ce^{\chi s})\asymp 1\quad\Longleftrightarrow\quad s=\chi^{-1}\log\log d+O(1),
\]
giving \eqref{eq:cover2}.

\subsection{Proof sketch}\label{sec:proof1}

The general strategies for the upper and lower bounds are similar in the two cases $b>2$ and $b=2$. For the upper bound, we bootstrap the process until a certain time, possibly depending on the desired targets, after which we obtain independent clusters and hence independent hitting events. For the lower bound, we identify a set of potentially last-hit vertices, typically near the antipode, and give a lower bound on the probability that some of them remain unhit. 


\paragraph{The case $b>2$.}
For the upper bound, fix $C$ and condition on the process at time
$\lambda^{-1}\log d+C$. The population at this time is $\asymp d e^{\lambda C}$, whose descendants are independent.
Lemma~\ref{lem:cluster1} states that each cluster has a uniformly positive
chance of reaching any prescribed vertex during the next $x_\star d$ time. Hence, the probability that a fixed vertex is missed is at most $\exp(-ce^{\lambda C}d)$.  Taking $C$ large makes this
 $o(b^{-d})$, so the coverage probability tends to one.  The
key input of Lemma~\ref{lem:cluster1} is the second moment method, following \cite{BZ}.

For the lower bound, we need to carefully construct a large collection of possible late targets. Intuitively, at time
$\lambda^{-1}\log d-B$, we examine the visited and unvisited symbols on each coordinate, and preselect a subset on the Hamming graph that contains targets far from the current range; validating this step is the key technical difficulty and requires a weighted martingale analysis (Proposition~\ref{prop:load1}). We then apply a packing argument on this set to further select an exponential subset $\mathcal C_d$ that is well-separated of order $d$ (Lemma~\ref{lem:packing1}). Lemma~\ref{lem:endpoint1}(i) then shows that
most of the targets in $\mathcal C_d$ remain unhit until the final interval of length
$\kappa\log d$, for some properly chosen $\kappa$:
\begin{itemize}
    \item $\kappa$ cannot be too small, because we want to avoid early hits;
    \item $\kappa$ cannot be too large, because we need to control the hitting probabilities in the final  $\kappa\log d$ time interval.
\end{itemize}
Next, using the well-separated property and the fact that particles rarely mutate order $d$ times in a $\kappa\log d$ time interval (Lemma~\ref{lem:truncation1}), each cluster in the final interval of length $\kappa\log d$ can only hit a single target in $\mathcal C_d$.  Elementary counting arguments (Lemma~\ref{lem:boxes1}) then yield a lower bound on the non-coverage probability.


\paragraph{The case $b=2$.}
The preceding techniques do not generalize to $b=2$, since the number of near-antipode vertices does not grow exponentially in $d$.

For the lower bound, we instead use the $d$
neighbors of the antipode. Lemma~\ref{lem:endpoint1}(i) shows
that almost all of them are still unvisited at the start of the final
$O(\log\log d)$ time interval. We then apply the second moment method to bound the number of unhit vertices in the final
$O(\log\log d)$ time interval from below. The difficulty in controlling the second moment is that these targets are only distance two
apart, so one descendant cluster can visit more than one of them (unlike the case $b>2)$. 
\begin{itemize}
    \item Since $\log\log d$ is small, a crude bound suffices to control the contribution from a single cluster (Lemma~\ref{lem:cluster2}).
    \item For contributions from different clusters, we use independence and  borrow the
good-pair/bad-pair argument from Falgas-Ravry, Larsson, and Markstr\"om 
\cite[proof of Theorem~5(2)]{StructuredCoupons}.
\end{itemize}

For the upper bound, we split the targets according to the distance $\ell$ from the antipode (so of distance $d-\ell$ from the origin), and the bootstrapping procedure depends on $\ell$.
\begin{itemize}
    \item First, we consider $\ell\leq C\log\log d$ for some $C>0$. In this case, we condition on the process at time $\kappa\log\log d$ for some $\kappa>0$. Lemmas~\ref{lem:binary3} and~\ref{lem:binary4} show that the early mutations are mostly friendly (meaning that they bring particles closer to the targets), by controlling the adversarial mutations. Then, Proposition~\ref{prop:weights1} and
Lemma~\ref{lem:martingale2} quantify how each friendly move contributes to the hitting events, through Lemma~\ref{lem:binary2}(i) which provides a lower bound for hitting probabilities uniformly for near-antipodal targets. 
\item Second, for $\ell>C\log\log d$, we condition on the process at time
$\delta\ell$. By then, many
particles have made at most $\ell/2$ mutations, so are at most $d-\ell/2$ away from the target. In this case, Lemma~\ref{lem:binary2}(ii)
gives a uniformly positive lower bound on the probability of hitting
the target.
\end{itemize}
 Finally, summing over $\ell$ leads to a total probability of $o(1)$ for the non-coverage event.

\section{Notation and basic probabilistic identities}
\label{sec:identities1}\label{sec:notation1}

 We write
$A\ll B$, or $B\gg A$, when $A\le CB$, where $C$ is independent of $d$ but may depend on the
fixed parameters $(b,\lambda)$. If $A\ll B\ll A$ we write $A\asymp B$.   For
deterministic positive sequences,
$a_d=O(b_d)$ means $|a_d|\le Cb_d$ for all large $d$,
$a_d=o(b_d)$ means $a_d/b_d\to0$, and $a_d\sim b_d$ means
$a_d/b_d\to1$.  For random variables, $X_d=O_{\Pp}(b_d)$ means that
$(X_d/b_d)_d$ is tight, while $X_d=o_{\Pp}(b_d)$ means that
$X_d/b_d\to0$ in probability.  Constants denoted by $c$ and $C$ may change
from line to line.   The notation $\operatorname{Pois}(\mu)$ denotes a Poisson
random variable with mean $\mu$. 
For an event $A$, write $\one_A$ for its indicator. In the rest of this section, we collect a few fundamental facts that will be frequently used throughout this paper.

The population of the branching random walk forms a Yule process, and specifically a continuous-time
\mbox{Galton--Watson} process in which every particle splits into two at rate
$\lambda$.  Thus, if $Z_t=|V_t|$, then from state $n$ it jumps to $n+1$ at
rate $\lambda n$.  In particular,
\begin{equation}\label{eq:yule1}
 \E[Z_t]=e^{\lambda t},
 \qquad e^{-\lambda t}Z_t\longrightarrow W>0
 \quad\text{almost surely}.
\end{equation}
These standard results on Yule processes may be found in
\cite{AthreyaNey}.  In particular, the population has the geometric law
\begin{equation}\label{eq:yule2}
 \Pp(Z_t=k)=e^{-\lambda t}(1-e^{-\lambda t})^{k-1},\qquad k\ge1.
\end{equation}
Together with nonnegative-martingale convergence, this also shows that
$W$ has the mean-one exponential law and is positive almost surely.

Let $(X(t))$ denote the underlying random walk.  For
$x\in\{0,1,\ldots,b-1\}^d$, let $\Pp_x$ and $\E_x$ denote the probability and
expectation operator when the initial particle is at $x$.  For
$u\in V_t$, let $X_u^{\mathrm{anc}}(s)$ be the location at time $s$ of the
ancestor of $u$.  The \emph{many-to-one identity} is
\begin{equation}\label{eq:spine1}
 \E_x\!\left[\sum_{u\in V_t}
 F((X_u^{\mathrm{anc}}(s))_{0\le s\le t})\right]
 =e^{\lambda t}\E_x[F((X(s))_{0\le s\le t})]
\end{equation}
for every nonnegative path functional $F$.  It follows by conditioning on
the first branching time, or equivalently by checking that both sides solve
the same backward equation.  We will also use the Poisson--Chernoff bound
\cite[Theorem~5.4]{MitzenmacherUpfal}
\begin{equation}\label{eq:chernoff1}
 \Pp\!\left(\operatorname{Pois}(\mu)\ge x\right)
 \le \exp\!\left(-x\log\frac{x}{\mu}+x-\mu\right)
 \le\left(\frac{e\mu}{x}\right)^x,\qquad x\ge\mu>0.
\end{equation}
We also use the following identity.
If an event occurs on each live particle at predictable rate $r_u(s)$, and $H_u(s)$ is a nonnegative predictable mark,
then 
\begin{equation}\label{eq:event1}
 \E\!\left[\sum_{\substack{\text{events }(u,s)\\s\le t}}H_u(s)\right]
 =
 \E\!\left[\int_0^t\sum_{u\in V_s}r_u(s)H_u(s)\dd s\right].
\end{equation}


Next, we recall the spine change of measure for the Yule tree; see, for
example, \cite{LyonsPemantlePeres,HR}.  If $\mathcal T_t$ is the Yule tree, the measure
$\mathbb Q_t$ on pairs $(\mathcal T_t,U)$ is defined by
\begin{equation}\label{eq:palm1}
 \E_{\mathbb Q_t}[F(\mathcal T_t,U)]
 =e^{-\lambda t}\E\!\left[\sum_{u\in V_t}F(\mathcal T_t,u)\right].
\end{equation}
Under $\mathbb Q_t$, the tree is biased by its number $Z_t$ of leaves at time
$t$.  Moreover, conditional on the tree, $U$ is selected uniformly among the
leaves at time $t$.  We call $U$ the spine particle at time $t$ and its
ancestral line the spine.

For vertices $x,y\in\{0,1,\ldots,b-1\}^d$, write $d_{\rm H}(x,y)$ for the number of coordinates
in which they differ.  For the underlying walk, write
$P_t(x,y)=\Pp_x(X(t)=y)$.  Then by \cite[Lemma~6]{BZ},
\begin{equation}\label{eq:kernel1}
 P_t(x,y)=b^{-d}A_t^{d-d_{\rm H}(x,y)}B_t^{d_{\rm H}(x,y)},
 \qquad A_t=1+(b-1)e^{-\beta t/d},\quad B_t=1-e^{-\beta t/d}.
\end{equation}

We also use the Mecke equation for a Poisson point process.  If $\eta$ has
intensity measure $\mu$ and $F$ is nonnegative and measurable, then
\begin{equation}\label{eq:mecke1}
 \E\!\left[\sum_{p\in\eta}F(p,\eta)\right]
 =\int \E[F(p,\eta+\delta_p)]\,\mu(\mathrm d p),
\end{equation}
where $\delta_p$ denotes the unit point mass at $p$.

\section{Proof of Theorem~\ref{thm:cover1}}\label{sec:nonbinary1}

\subsection{Proof of the upper bound}\label{sec:kernel1}


For $y\in\{0,1,\ldots,b-1\}^d$, define
\[
 J_y(t)=\#\{\text{mutation jumps into }y\text{ during }(0,t]\}.
\]
This quantity records each hit on $y$ separately,
including repeated hits along one ancestral line; a branching birth
at $y$ is not counted.

\begin{lemma}\label{lem:arrival1}
Let $x,y\in\{0,1,\ldots,b-1\}^d$ and
$\ell=d-d_{\rm H}(x,y)$.  Then
\begin{equation}\label{eq:arrival1}
 \E_x[J_y(t)]=\int_0^t e^{\lambda s}b^{-d}
 A_s^\ell B_s^{d-\ell}H_{d,\ell}(s)\dd s,
\end{equation}
where
\begin{equation}\label{eq:predecessor1}
 H_{d,\ell}(s)=\frac\ell d\frac{B_s}{A_s}
 +\frac{d-\ell}{d(b-1)}\left(\frac{A_s}{B_s}+b-2\right).
\end{equation}
In particular, if $x$ and $y$ are antipodal (meaning $\ell=0$), then
\begin{equation}\label{eq:arrival2}
 \E_x[J_y(t)]=\int_0^t e^{\lambda s}b^{-d}
 \left(\frac{A_sB_s^{d-1}}{b-1}
       +\frac{b-2}{b-1}B_s^d\right)\dd s.
\end{equation}
\end{lemma}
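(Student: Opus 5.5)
The plan is to compute $\E_x[J_y(t)]$ directly from the event identity \eqref{eq:event1}, then the many-to-one identity \eqref{eq:spine1}, and finally the explicit kernel \eqref{eq:kernel1}. The relevant events are mutation jumps of a particle $u$ into $y$. Under the total-rate-one convention, a particle at $z$ jumps to each of its $d(b-1)$ Hamming neighbours at rate $1/(d(b-1))$. So the predictable rate of the event ``$u$ jumps into $y$'' at time $s$ is $r_u(s)=\frac{1}{d(b-1)}\one\{d_{\rm H}(X_u(s-),y)=1\}$. Applying \eqref{eq:event1} with $H\equiv1$ and Tonelli gives
\[
\E_x[J_y(t)]=\frac{1}{d(b-1)}\int_0^t \E_x\Big[\sum_{u\in V_s}\one\{d_{\rm H}(X_u(s),y)=1\}\Big]\dd s .
\]
Here the left limit can be replaced by the value at $s$, since this changes only a Lebesgue-null set of times. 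Branching births at $y$ are not mutation jumps and so are automatically excluded, which is consistent with the definition of $J_y$.

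Next I apply \eqref{eq:spine1} to the functional $F=\one\{d_{\rm H}(X(s),y)=1\}$. This turns the inner expectation into $e^{\lambda s}\sum_{z:\,d_{\rm H}(z,y)=1}P_s(x,z)$. It then remains to sum the kernel \eqref{eq:kernel1} over the neighbours $z$ of $y$, sorted by $d_{\rm H}(x,z)$. Each neighbour $z$ differs from $y$ in exactly one coordinate $i$, and there are three cases.
\begin{enumerate}[label=(\roman*)]
\item $x_i=y_i$. This happens for $\ell$ coordinates $i$, with $b-1$ choices of $z_i$ for each. Then $d_{\rm H}(x,z)=d-\ell+1$, so $P_s(x,z)=b^{-d}A_s^{\ell}B_s^{d-\ell}\cdot (B_s/A_s)$.
\item $x_i\ne y_i$ and $z_i=x_i$. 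This happens for $d-\ell$ coordinates, with one choice each. Then $d_{\rm H}(x,z)=d-\ell-1$, giving the base term times $A_s/B_s$.
\item $x_i\ne y_i$ and $z_i\notin\{x_i,y_i\}$. This happens for $d-\ell$ coordinates, with $b-2$ choices each. Then $d_{\rm H}(x,z)=d-\ell$, giving the base term times $1$.
\end{enumerate}
Summing the three cases and dividing by $d(b-1)$, the total is
\[
b^{-d}A_s^\ell B_s^{d-\ell}\Big[\tfrac{\ell}{d}\tfrac{B_s}{A_s}+\tfrac{d-\ell}{d(b-1)}\big(\tfrac{A_s}{B_s}+b-2\big)\Big]=b^{-d}A_s^\ell B_s^{d-\ell}H_{d,\ell}(s).
\]
This is exactly \eqref{eq:arrival1}.

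For the antipodal case, set $\ell=0$. Then $H_{d,0}(s)=\frac{1}{b-1}(A_s/B_s+b-2)$, and multiplying by $B_s^d$ gives \eqref{eq:arrival2}.

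The only step that needs care is the justification of \eqref{eq:event1} in this setting. This requires checking that the jump-into-$y$ intensity is $\frac{1}{d(b-1)}$ times the indicator of being adjacent to $y$, and that Fubini/Tonelli applies. Both hold because all quantities are nonnegative and $\E[Z_s]=e^{\lambda s}<\infty$. The rest is the bookkeeping of the neighbour count above.
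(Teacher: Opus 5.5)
Your proposal is correct and is essentially the paper's proof. Both use the event-rate identity \eqref{eq:event1} together with the many-to-one formula to reduce to $\frac{e^{\lambda s}}{d(b-1)}\sum_{w:\,d_{\rm H}(w,y)=1}P_s(x,w)$, and then sum the kernel over the neighbours of $y$ according to the flipped coordinate; your three cases are the paper's two cases with the differing-coordinate case split in two, and your exponents of $A_s$ and $B_s$ are right.
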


\begin{proof}
By \eqref{eq:spine1}, at time $s$, the expected number of particles at a vertex $w$ is
$e^{\lambda s}P_s(x,w)$.  A jump into $y$ must be made from a neighbor
$w$ with $d_{\rm H}(w,y)=1$, and the mutation $w\to y$ has rate
$1/(d(b-1))$.  Therefore, applying \eqref{eq:event1} from
Section~\ref{sec:identities1} gives
\[
 \E_x[J_y(t)]
 =\int_0^t\frac{e^{\lambda s}}{d(b-1)}
 \sum_{w:\,d_{\rm H}(w,y)=1}P_s(x,w)\dd s.
\]
We evaluate the sum over the neighbors of $y$. There are two cases.
\begin{itemize}
    \item If the different coordinate of $w,y$
is one of the $\ell=d-d_{\rm H}(x,y)$ matching coordinates, there are $b-1$
choices for $w$, each with transition weight
$b^{-d}A_s^{\ell-1}B_s^{d-\ell+1}$. 
\item If it is one of the $d-\ell$
coordinates where they differ, one choice for $w$ uses the symbol of $x$
and has weight $b^{-d}A_s^{\ell+1}B_s^{d-\ell-1}$; the other $b-2$
choices have weight $b^{-d}A_s^\ell B_s^{d-\ell}$.
\end{itemize}   Using
$P_s(x,y)=b^{-d}A_s^\ell B_s^{d-\ell}$ then proves
\eqref{eq:arrival1} and \eqref{eq:predecessor1}.
Setting $\ell=0$ gives \eqref{eq:arrival2}.
\end{proof}


For the next lemma, set
\begin{align}
    q_\star=e^{-\beta x_\star},\qquad
 R=\frac{1+(b-1)q_\star}{1-q_\star},\label{eq:R}
\end{align}
so that $R>1$.

\begin{lemma}\label{lem:endpoint1}
\begin{enumerate}[label=\textup{(\roman*)}]
\item For every $C_{\rm ov},C_D<\infty$ and $\vartheta>R$, there is a
constant $C<\infty$ such that, for all large $d$, all
$|D|\le C_D\log d$, and all vertices $x,y$ satisfying
$d-d_{\rm H}(x,y)=\ell\le C_{\rm ov}\log d$,
\begin{equation*}
 \E_x[J_y(x_\star d-D)]
 \le C\vartheta^\ell e^{-\chi D}.
\end{equation*}

\item For each fixed integer $\ell\ge0$ and every $S_d=o(d^{1/2})$,
uniformly over vertices $x,y$ satisfying $d-d_{\rm H}(x,y)=\ell$,
\begin{equation}\label{eq:endpoint2}
 \E_x[J_y(x_\star d+S_d)]
 \sim
 \frac{1+(b-1)q_\star+(b-2)(1-q_\star)}
 {(b-1)(1-q_\star)\chi}e^{\chi S_d}R^\ell.
\end{equation}
\end{enumerate}
\end{lemma}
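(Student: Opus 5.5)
Both parts follow from the exact formula \eqref{eq:arrival1} of Lemma~\ref{lem:arrival1} by a Laplace-type analysis of the integrand near its right endpoint. First rewrite the integrand in exponential form. With $s=zd$ we have $A_s=1+(b-1)e^{-\beta z}$ and $B_s=1-e^{-\beta z}$, so
\[
 e^{\lambda s}b^{-d}B_s^{d}=\exp\bigl(d\Phi(z)\bigr),
\]
and the integrand equals
\[
 \exp\bigl(d\Phi(s/d)\bigr)\,(A_s/B_s)^{\ell}\,H_{d,\ell}(s).
\]
Since $\Phi$ is increasing, the integral is dominated by $s$ near $t$. Substituting $s=t-r$, I will expand $d\Phi((t-r)/d)$ around $x_\star$.

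For part (ii), take $t=x_\star d+S_d$. Taylor expansion gives
\[
 d\Phi\bigl(x_\star+(S_d-r)/d\bigr)=\chi(S_d-r)+O\bigl((S_d-r)^2/d\bigr),
\]
which is $\chi(S_d-r)+o(1)$ uniformly for $r\le M$, since $S_d=o(\sqrt d)$. On the same range, $A_s/B_s\to R$ and
\[
 H_{d,\ell}(s)\to \frac{1}{b-1}\bigl(R+b-2\bigr),
\]
because $\ell$ is fixed and $\ell/d\to0$. This limit equals
\[
 \frac{1+(b-1)q_\star+(b-2)(1-q_\star)}{(b-1)(1-q_\star)}.
\]
Integrating $e^{\chi(S_d-r)}$ over $r\in[0,\infty)$ gives the factor $e^{\chi S_d}/\chi$, which yields \eqref{eq:endpoint2}.

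It remains to show that the contribution of $r>M$ is negligible relative to $e^{\chi S_d}$ as $M\to\infty$. By concavity of $\log(1-e^{-\beta x})$, $\Phi$ is concave, so $\Phi(z)\le \Phi'(z_0)(z-z_0)+\Phi(z_0)$ for $z\le z_0$. Moreover $\Phi'$ is decreasing and $\Phi'\ge\lambda$. On $z\in[x_\star/2,\,x_\star+S_d/d]$ this gives
\[
 d\Phi(s/d)\le \chi(S_d-r)+o(1)-c\min(r,\ldots),
\]
up to the quadratic error, which is harmless. I will handle it directly by $d\Phi(s/d)\le d\Phi(t/d)-\lambda r$, which follows from $\Phi'\ge\lambda$; this gives geometric decay $e^{-\lambda r}$. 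The factors $(A_s/B_s)^\ell$ and $H$ are bounded for $s\ge \epsilon d$. For small $s\le\epsilon d$, $B_s^{d}$ is tiny, but $A_s/B_s$ blows up like $d/s$. I will bound this crudely: $(A_s/B_s)^\ell B_s^d\le B_s^{d-\ell}\cdot b^\ell$, using $A_s\le b$. For $s\le\epsilon d$, $B_s^{d-\ell}\le (1-e^{-\beta\epsilon})^{d/2}$, which is exponentially small and beats $e^{\lambda s}$ and the polynomial factor $H\ll d/s$. The integrability issue at $s\to0$ disappears because $H B_s^{d-\ell}$ is bounded there, as $B_s^{d-\ell-1}\le1$.

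For part (i), the same decomposition applies with $t=x_\star d-D$ and $\ell\le C_{\rm ov}\log d$. Here the Taylor error is $O(D^2/d)=o(1)$ and $\ell/d\to0$. However, $(A_s/B_s)^\ell$ is now no longer close to $R^\ell$ uniformly. Near $s\approx x_\star d$, the ratio is $R(1+O((D+r)/d))$, so its $\ell$-th power is at most $R^\ell e^{O(\ell(D+r)/d)}\le \vartheta^\ell e^{o(r)}$, which is absorbed by the $e^{-\lambda r}$ decay. Away from this region I will use the monotonicity of $A_s/B_s$ in $s$: it is decreasing, so for $s$ in $[\epsilon d,t]$ it is at most $A_{\epsilon d}/B_{\epsilon d}$. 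The resulting factor $K^{\ell}=d^{O(1)}$ is beaten by the exponential gap $d\Phi(s/d)\le -c d$ once $s\le(x_\star-\eta)d$. On $[(x_\star-\eta)d,t]$, choosing $\eta$ small ensures $A_s/B_s\le\vartheta$, and the bound $e^{d\Phi(s/d)}\le e^{-\chi D+o(1)-\lambda r}$ handles this range. Small $s$ is treated as in (ii).

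The main obstacle is exactly this balancing in (i): for a polynomially growing $\ell$, one must trade the $\vartheta^\ell$ slack against the loss in the ratio $A_s/B_s$ across the whole integration range, uniformly in $|D|\le C_D\log d$.
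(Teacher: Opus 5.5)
Your proposal is correct and is essentially the paper's proof. You split \eqref{eq:arrival1} into three pieces: a small-$s$ piece, killed by $B_s^{d-\ell-1}$ after absorbing one factor of $B_s$ into $H_{d,\ell}$; a bulk piece, where $\Phi<0$ gives $e^{-cd}$ against the $d^{O(1)}$ loss from $(A_s/B_s)^\ell$; and a near-endpoint piece, where $(A_s/B_s)^\ell\le\vartheta^\ell$. For part (ii) you add dominated convergence on a window $r\le M$, as the paper does. The only difference is bookkeeping near the endpoint. You take the exponential decay from $\Phi'\ge\lambda$ alone and bound $(A_s/B_s)^\ell H_{d,\ell}(s)$ pointwise. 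The paper instead proves the log-derivative bound \eqref{eq:growth1} for the full integrand and compares with its endpoint value. Your version therefore avoids needing $|H_{d,\ell}'|\ll d^{-1}$.
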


The proof is deferred to Appendix~\ref{app:endpoint1}.  It applies the many-to-one identity \eqref{eq:spine1} and adapts an integral decomposition estimate in
\cite[Lemma~22]{BZ}.

We also need the following uniform lower bound on first-passage times with arbitrary targets.

\begin{lemma}\label{lem:cluster1}
There is a constant $p_0>0$, depending only on $(b,\lambda)$, such that, for
all large $d$ and all $x,y\in\{0,1,\ldots,b-1\}^d$,
\begin{equation}\label{eq:cluster1}
 \Pp_x\!\left(\tau_y\le x_\star d\right)\ge p_0.
\end{equation}
\end{lemma}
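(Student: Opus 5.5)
We would prove \eqref{eq:cluster1} by the second moment method, applied to a count of hits confined to the last unit of time. Fix $x,y$ with $\ell=d-d_{\rm H}(x,y)$, set $t=x_\star d$, and let $N$ be the number of mutation jumps into $y$ during $(t-1,t]$. Then $\Pp_x(\tau_y\le t)\ge\Pp_x(N\ge1)\ge(\E_x N)^2/\E_x[N^2]$. So it suffices to show $\E_x N\gg1$ and $\E_x[N^2]\ll\E_xN+(\E_xN)^2$, uniformly in $x,y$ and large $d$. Restricting to a unit window is meant to keep targets close to $x$ from producing a huge, badly correlated count of early hits.

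\emph{First moment.} We write $\E_xN$ by Lemma~\ref{lem:arrival1}, integrated over $s\in(t-1,t]$, and use \eqref{eq:kernel1}. At $s\approx x_\star d$ we have $A_s/B_s\to R>1$, so the integrand is at least of order $e^{\lambda s}b^{-d}B_s^d\,R^\ell=e^{d\Phi(s/d)}R^\ell$. Moreover $H_{d,\ell}(s)$ is bounded below by a positive constant uniformly in $\ell$, since it is a convex combination of $B_s/A_s$ and a term at least of order $R$. Since $d\Phi(s/d)=O(1)$ on the window, this gives $\E_xN\ge c\,R^\ell\ge c$, consistent with Lemma~\ref{lem:endpoint1}(ii).

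\emph{Second moment.} Let $m_r(z)$ denote the expected number of window hits on $y$ by descendants of a particle at $z$ at time $t-r$; $m_r(z)$ is again explicit from \eqref{eq:kernel1}. The many-to-two formula (decompose ordered pairs of hits by the branching time $s$ of their most recent common ancestor, using \eqref{eq:event1} and \eqref{eq:spine1}) gives
\[
\E_x[N^2]\le \E_x[N_{\rm same}]+2\lambda\int_0^t e^{\lambda s}\sum_z P_s(x,z)\,m_{t-s}(z)^2\dd s .
\]
Here $N_{\rm same}$ counts ordered pairs of hits along one ancestral line. The key point is that $\sum_zP_s(x,z)P_r(z,y)^2$ factorizes over coordinates, because all three kernels are products of one-coordinate kernels. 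Hence the integrand is $e^{\lambda s}e^{2\lambda(t-s)}$ times an explicit product $\prod_{i=1}^d f_i(s)$. We would compare this with $(\E_xN)^2\asymp e^{2\lambda t}P_t(x,y)^2$ and split the integral into three ranges.
\begin{itemize}[leftmargin=*]
\item For $s=O(1)$, the per-coordinate correlation factor is $1+O(1/d)$. The product is therefore $O(1)$, and this range contributes $\ll(\E_xN)^2$.
\item For intermediate $s$, the prefactor $e^{-\lambda s}$ has to beat the growth of the correlation product. This is where $\lambda<1$ and the defining relation $\Phi(x_\star)=0$ enter, and it is essentially the computation in \cite{BZ}. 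We would bound $\log\prod_i f_i(s)$ by $d\,g(s/d)$ and check that $-\lambda\theta+g(\theta)<0$ on $(0,x_\star)$ with strictly negative slope at $0$. Since the $\ell$ matching coordinates enter both sides in the same way, the ratio should be uniform in $\ell$.
\item For $s$ near $t$, i.e.\ $r=t-s=O(\log d)$, the contribution is roughly $\E_xN$ times the expected number of window hits on $y$ generated from a near neighborhood of $y$ in time $r$. After leaving $y$ a walk needs at least two specific mutations to return, which costs $O(1/d)$ per return attempt, while the population grows only like $e^{\lambda r}$. We expect this to give $\ll\E_xN$. The single-line term $N_{\rm same}$ is handled the same way.
\end{itemize}

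The main obstacle is the intermediate range of branching times. There we must show, uniformly in $\ell\in[0,d]$, that the per-coordinate second-moment factor integrated against $e^{-\lambda s}$ stays bounded. The first thing we would try is the exact product formula together with the convexity of $\theta\mapsto\log f(\theta)$, reducing the question to a one-variable inequality at $\theta\in(0,x_\star)$. If uniformity in $\ell$ fails for $\ell$ comparable to $d$, the fallback is to reduce to a smaller effective dimension. The walk restricted to the $d-\ell$ differing coordinates evolves, up to time change, like the same model in dimension $d-\ell$. Reaching the matching coordinates then has probability bounded below, so the case of large $\ell$ reduces to the antipodal case in a smaller dimension, which takes less time.
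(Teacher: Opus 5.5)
\textbf{Same strategy, but the decisive step is missing.} Your skeleton matches the paper's: Paley--Zygmund, many-to-two by latest branching time, coordinatewise factorization, and convexity in the branching time. But the step you defer as the ``main obstacle'' is the entire content of the lemma, and your stated reason for uniformity in $\ell$ is wrong.

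Matching coordinates do not enter both sides in the same way. Let $r$ be the branching time, $p=e^{-\beta r/d}$ and $q=e^{-\beta t/d}$. After dividing $\sum_zP_r(x,z)P_{t-r}(z,y)^2$ by $P_t(x,y)^2$:
\begin{itemize}
\item each matching coordinate contributes $K_+(p,q)=1+\frac{(b-1)q^2(1-p)(1+(b-1)p)}{p^2(1+(b-1)q)^2}>1$;
\item each differing coordinate contributes $K_-(p,q)=1+\frac{q^2(1-p)(b-1+p)}{p^2(1-q)^2}$.
\end{itemize}
For $x=y$ the correlation product is $K_+^d$, exponentially large once $r\asymp d$, and only $e^{-\lambda r}$ compensates it.

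\textbf{What is missing.} At $t=x_\star d$, write $p=q_\star^u$ and $\alpha=d_{\rm H}(x,y)/d$. The exponent is then $G_\alpha(u)=(1-\alpha)f_+(u)+\alpha f_-(u)$, where $f_\pm(u)=\frac{\lambda}{\beta}\log q_\star^u+\log K_\pm(q_\star^u,q_\star)$.
\begin{itemize}
\item Both $f_\pm$ are strictly convex. For $f_-$ with $b>2$ this needs a real check: positivity of a concave quadratic at two endpoints.
\item $f_\pm(0)=0$, and $\Phi(x_\star)=0$ gives $f_-(1)=0$ and $f_+(1)=-\log R$.
\item Convexity then yields $f_+(u)\le-u\log R$ and $f_-(u)\le-c\min\{u,1-u\}$.
\item Since $G_\alpha$ is affine in $\alpha$, $\int_0^{x_\star d}e^{dG_\alpha}\dd r\ll1$ uniformly over all $\alpha\in[0,1]$.
\end{itemize}
Saying this is ``essentially the computation in \cite{BZ}'' does not settle it. The remark after the lemma notes that the first-passage estimates there are stated only for $d_{\rm H}(x,y)\le d/L_1$, and uniformity over all distances is exactly what is new.

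\textbf{Your fallback does not work.} Time-changing the walk on the $d-\ell$ differing coordinates to rate one multiplies the branching rate by $d/(d-\ell)$. So it is not the same model, and not even slow-branching when $\ell$ is large. In addition, hitting $y$ requires all $\ell$ matching coordinates to be correct at the same instant, which is not an event of probability bounded below.

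\textbf{A secondary point: the window.} The unit window costs you the exact factorization you then invoke. Your $m_r(z)=\int_{r-1}^{r}e^{\lambda u}\frac{1}{d(b-1)}\sum_{w:\,d_{\rm H}(w,y)=1}P_u(z,w)\dd u$ is a sum of products, not a product. So $\sum_zP_s(x,z)m_{t-s}(z)^2$ is not $\prod_if_i(s)$, and replacing it by $\sum_zP_s(x,z)P_{t-s}(z,y)^2$ needs an extra step: for example Jensen over $w$ and $u$, then bounds uniform in $q$ near $q_\star$.

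The paper avoids this by working with $N_y(x_\star d)$, the number of particles at $y$ at time $x_\star d$. Then:
\begin{itemize}
\item the factorization is exact;
\item the first moment is exactly $R^{d-d_{\rm H}(x,y)}\ge1$, so the diagonal term $1/\mu$ is harmless;
\item late branching is absorbed by $f_-(u)\le-c(1-u)$, so no separate $O(\log d)$ analysis is needed.
\end{itemize}
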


The proof uses the exact many-to-two identity for the particle count at the
target \cite{HR}.  After dividing the pair formula by the square of the first moment,
the resulting ratio factors coordinate by coordinate.  A uniform convexity
bound then controls the second moment at every Hamming distance, and
Paley--Zygmund gives \eqref{eq:cluster1}.  The details are elaborated in
Appendix~\ref{app:cluster1}.

\begin{remark}
Lemma~\ref{lem:cluster1} can be compared to estimates of first-passage times in Blanchet and Zhang \cite[Theorem~2]{BZ}.  Their result provides tight asymptotics of the first-passage times for $d_{\rm H}(x,y)\le d/L_1$, where $L_1$
is a sufficiently large fixed constant. Lemma \ref{lem:cluster1} does not assume this condition.
\end{remark}

\begin{proof}[Proof of the upper bound in Theorem~\ref{thm:cover1}]
Let $(\cF_t)_{t\ge0}$ be the natural filtration.
We apply the independent-cluster argument of
\cite[Lemma~25 and the proof of Corollary~4]{BZ}.  
Fix $C\in\mathbb R$.  By
\eqref{eq:yule1},
$e^{-\lambda s}Z_s\to W>0$ almost surely.  Fix $w>0$.  On $\{W\ge w\}$,
almost-sure convergence gives
$Z_{\lambda^{-1}\log d+C}\ge \frac12wd\,e^{\lambda C}$ for all sufficiently
large $d$.  Dominated convergence therefore gives
\[
 \Pp\!\left(
 \left\{Z_{\lambda^{-1}\log d+C}<\frac12wd\,e^{\lambda C}\right\}
 \cap\{W\ge w\}\right)\longrightarrow0.
\]

Conditional on $\cF_{\lambda^{-1}\log d+C}$, the descendant clusters are
independent, so Lemma~\ref{lem:cluster1} gives, for every target $y$,
\[
 \Pp\!\left(\tau_y>x_\star d+\lambda^{-1}\log d+C
 \mid\cF_{\lambda^{-1}\log d+C}\right)
 \le(1-p_0)^{Z_{\lambda^{-1}\log d+C}}
 \le\exp\!\left(-p_0Z_{\lambda^{-1}\log d+C}\right).
\]
Consequently, on
$\{Z_{\lambda^{-1}\log d+C}\ge\frac12wd\,e^{\lambda C}\}$,
\[
 \Pp\!\left(\tcov(d)>x_\star d+\lambda^{-1}\log d+C
 \mid\cF_{\lambda^{-1}\log d+C}\right)
 \le\exp\!\left(d\log b-\frac12p_0wd\,e^{\lambda C}\right).
\]
Choose $C_0=C_0(w)$ so that
$p_0we^{\lambda C_0}/2>\log b$.  For every $C\ge C_0$,
\[
\begin{split}
 &\Pp\!\left(\tcov(d)>x_\star d+\lambda^{-1}\log d+C\right)\\
 &\quad\le \Pp(W<w)
 +\Pp\!\left(
 \left\{Z_{\lambda^{-1}\log d+C}<\frac12wd\,e^{\lambda C}\right\}
 \cap\{W\ge w\}\right)\\
 &\qquad\quad
 +\exp\!\left(d\log b-\frac12p_0wd\,e^{\lambda C}\right).
\end{split}
\]
The second and third terms tend to zero as $d\to\infty$.  Hence,
\[
 \limsup_{d\to\infty}
 \Pp\!\left(\tcov(d)>x_\star d+\lambda^{-1}\log d+C\right)
 \le \Pp(W<w).
\]
Finally let $w\downarrow0$; since $W>0$ almost surely,
$\Pp(W<w)\to0$. This completes the proof.
\end{proof}
\subsection{Proof of the lower bound}

\subsubsection{The lower-bound argument}\label{sec:nonbinary2}
\suppressfloats[t]

Throughout this section, we define $s_{B,d}=\lambda^{-1}\log d-B$.  We say a coordinate is \emph{saturated} if every nonzero symbol in $\{0,\dots,b-1\}$ has
appeared on that coordinate by time $s_{B,d}$.  For each coordinate $i$, we define the \textit{least-saturated
direction} $c_i$ as follows. 
\begin{itemize}
    \item If $i$ is saturated, let $c_i$ be the smallest
nonzero symbol among those carried by the fewest particles at time
$s_{B,d}$. 
\item  Otherwise, let $c_i$ be the smallest unseen
nonzero symbol.  In particular, $c_i=1$ on an untouched coordinate. 
\end{itemize}
Figure~\ref{fig:symbols1} illustrates these possible
types of coordinates and the rule for choosing the least-saturated
direction.

\begin{figure}[t]
\centering
\begin{tikzpicture}[x=1cm,y=1cm,
  every node/.style={font=\footnotesize},
  leaf/.style={circle,draw,minimum size=4.5mm,inner sep=0pt},
  chosen/.style={circle,draw=orange!80!black,fill=orange!18,
    line width=.8pt,minimum size=4.5mm,inner sep=0pt}]
  \draw[rounded corners=2pt,fill=gray!4] (0,1.15) rectangle (4.8,3.65);
  \node[font=\small\bfseries] at (2.4,3.33) {untouched};
  \node[anchor=west] at (.22,2.86) {nonzero symbols seen: $\emptyset$};
  \node[anchor=west] at (.22,2.38) {symbols at the leaves:};
  \foreach \x in {.72,1.48,2.24,3.00,3.76}
    \node[leaf] at (\x,1.91) {$0$};
  \node[anchor=north west,align=left,font=\scriptsize] at (.22,1.58)
    {\textcolor{blue!65!black}{least-saturated direction: $c_i=1$}};

  \draw[rounded corners=2pt,fill=blue!3] (5.05,1.15) rectangle (9.85,3.65);
  \node[font=\small\bfseries] at (7.45,3.33) {touched, unsaturated};
  \node[anchor=west] at (5.27,2.86) {nonzero symbols seen: $\{1,2\}$};
  \node[anchor=west] at (5.27,2.38) {symbols at the leaves:};
  \node[leaf] at (5.77,1.91) {$0$};
  \node[leaf] at (6.53,1.91) {$1$};
  \node[leaf] at (7.29,1.91) {$2$};
  \node[leaf] at (8.05,1.91) {$1$};
  \node[leaf] at (8.81,1.91) {$0$};
  \node[anchor=north west,align=left,font=\scriptsize] at (5.27,1.58)
    {\textcolor{blue!65!black}{least-saturated direction: $c_i=3$}};

  \draw[rounded corners=2pt,fill=orange!4] (10.10,1.15) rectangle (15.25,3.65);
  \node[font=\small\bfseries] at (12.675,3.33) {saturated};
  \node[anchor=west] at (10.32,2.86) {nonzero symbols seen: $\{1,2,3\}$};
  \node[anchor=west] at (10.32,2.38) {symbols at the leaves:};
  \node[leaf] at (10.82,1.91) {$1$};
  \node[leaf] at (11.58,1.91) {$1$};
  \node[chosen] at (12.34,1.91) {$2$};
  \node[leaf] at (13.10,1.91) {$3$};
  \node[leaf] at (13.86,1.91) {$3$};
  \node[anchor=north west,align=left,font=\scriptsize] at (10.32,1.58)
    {\textcolor{blue!65!black}{least-saturated direction: $c_i=2$}};
\end{tikzpicture}
\caption{The coordinate classification, illustrated for $b=4$. }
\label{fig:symbols1}
\end{figure}

 For $u\in V_{s_{B,d}}$, let $L_u$ be the number of saturated coordinates
on which $u$ carries the least-saturated direction $c_i$, i.e.,
$X_u(s_{B,d})(i)=c_i$.
 The next proposition bounds the sum of the exponential weights
$\vartheta^{L_u}$ over the particles alive at time $s_{B,d}$.

\begin{proposition}\label{prop:load1}
Assume $b>2$.  For every fixed $B\ge0$ and every fixed
$1<\vartheta<1+\lambda$, the following estimate holds for all sufficiently
large $d$:
\begin{equation*}
 \E\!\left[\sum_{u\in V_{s_{B,d}}}\vartheta^{L_u}\right]
 \ll_{B,\vartheta} d.
\end{equation*}
\end{proposition}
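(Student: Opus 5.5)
The plan is to condition on the genealogy and use the fact that, given the Yule tree $\mathcal T_{s}$ with $s=s_{B,d}$, the mutation processes on the $d$ coordinates are independent. Each coordinate receives mutations as an independent Poisson process of rate $1/d$ along the branches of the tree, with uniformly chosen nonzero-difference symbols. This follows by Poisson thinning of the rate-one walk. Writing the expectation through the spine measure \eqref{eq:palm1},
\[
 \E\Big[\sum_{u\in V_s}\vartheta^{L_u}\Big]=e^{\lambda s}\,\E_{\mathbb Q_s}\big[\vartheta^{L_U}\big]=de^{-\lambda B}\,\E_{\mathbb Q_s}\big[\vartheta^{L_U}\big].
\]
So it suffices to show $\E_{\mathbb Q_s}[\vartheta^{L_U}]=O(1)$. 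Conditionally on $(\mathcal T_s,U)$, the quantity $L_U=\sum_i\one_{E_i}$ is a sum of independent indicators. Here $E_i$ is the event that coordinate $i$ is saturated and $X_U(s)(i)=c_i$. Hence
\[
 \E_{\mathbb Q_s}[\vartheta^{L_U}\mid\mathcal T_s,U]=\prod_i\big(1+(\vartheta-1)p_i\big)\le\exp\Big((\vartheta-1)\sum_i p_i\Big),
\]
with $p_i=\Pp(E_i\mid\mathcal T_s,U)$. By symmetry all $p_i$ are equal, so the task reduces to bounding $d\,p_1$ in an exponential-moment sense under $\mathbb Q_s$.

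Next I estimate $p_1$. The event $E_1$ forces the spine to have mutated on coordinate $1$ at some time $r\le s$, giving probability density $\approx \dd r/d$. Saturation also forces at least $b-2\ge1$ other mutations on coordinate $1$ elsewhere in the tree. The key constraint is the least-saturated rule. The symbol carried by $U$ is carried by (at least) the leaves of the spine subclade founded at time $r$. Call this clade size $K_r$; it is roughly geometric with mean $\asymp e^{\lambda(s-r)}$. Every other nonzero symbol must therefore be carried by at least $K_r$ leaves, which requires a competing mutation on a branch whose clade at time $s$ has at least $K_r$ leaves. Such branches exist essentially only before time $\approx r$ (up to geometric fluctuations). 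Their total length is at most the tree length $\Lambda_{r'}$ up to a time $r'$ with $e^{\lambda(s-r')}\approx K_r$. Each is hit on coordinate $1$ at rate $1/d$. This yields the target bound
\[
 d\,p_1\ \lesssim\ \int_0^s \min\Big(1,\ \frac{\Lambda^{(K_r)}}{d}\Big)\dd r,
\]
where $\Lambda^{(k)}$ denotes the length of branches whose clades have at least $k$ leaves. Typically $\Lambda^{(K_r)}\approx e^{\lambda r}/\lambda$, so the integral is $\approx e^{\lambda s}/(\lambda d)=e^{-\lambda B}/\lambda=O(1)$. This is what one needs.

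The main obstacle is that this typical estimate is not uniform. Under $\mathbb Q_s$ the spine's clades are size-biased, and clade sizes have geometric fluctuations. The exponential moment of $(\vartheta-1)\,d\,p_1$ is therefore governed by atypical trees in which small competing clades make early spine mutations count. Here the restriction $\vartheta<1+\lambda$ should enter. I would first bound directly, without exponentiating the conditional sum, the contribution of each spine mutation at time $r$: it contributes a factor $1+(\vartheta-1)\pi(r)$, where $\pi(r)$ is the probability (averaged over the off-spine subtrees, which are independent Yule trees under $\mathbb Q_s$) that the symbol is least-saturated. The resulting bound is $\exp\big((\vartheta-1)\int_0^s\pi(r)\dd r\big)$. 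I then need $\int_0^s\pi(r)\dd r=O(1)$, or at least $\le(\lambda/(\vartheta-1))\,s-\omega(1)$ against the available $e^{\lambda s}$ growth. The worst case is a rare tree where a spine clade is unusually small. I would compare the geometric tail $\Pp(K_r\le k)\asymp k e^{-\lambda(s-r)}$ with the gain $\vartheta$. Summing over the number of mutated coordinates should give a geometric series in $\vartheta/(1+\lambda)$, which converges precisely when $\vartheta<1+\lambda$.
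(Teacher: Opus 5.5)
Your first step coincides with the paper's Lemma~\ref{lem:load1}. The spine change of measure and conditional independence of the coordinates given the tree reduce the proposition to $\E_{\mathbb Q_s}[e^{(\vartheta-1)d\,q_U(\mathcal T_s)}]=O(1)$, where $q_U(\mathcal T_s)$ is your $p_1$. The gap is in how you then control this exponential moment.

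All $d$ coordinates share the same genealogy. So you cannot average over the off-spine subtrees one spine mutation (one coordinate) at a time and then multiply the factors $1+(\vartheta-1)\pi(r)$. The events $E_i$ are positively correlated through the tree: a spine clade that stays small for a long time, or a large off-spine mass, favors every coordinate at once. Conditional Jensen gives $\E_{\mathbb Q_s}[(1+(\vartheta-1)q_U(\mathcal T_s))^d\mid\text{spine}]\ge(1+(\vartheta-1)\E_{\mathbb Q_s}[q_U(\mathcal T_s)\mid\text{spine}])^d$, which is the wrong direction. The tree randomness must stay inside the exponential, and that is the whole difficulty.

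Your fallback criterion $\int_0^s\pi(r)\dd r\le\lambda s/(\vartheta-1)-\omega(1)$ would not suffice either. It only gives $\E_{\mathbb Q_s}[\vartheta^{L_U}]\le e^{\lambda s-\omega(1)}$, hence a total bound $o(d^2)$ rather than $O(d)$, because the prefactor $e^{\lambda s}=de^{-\lambda B}$ already uses all the allowed growth.

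Several inputs to your heuristic are also off:
\begin{itemize}
\item The leaves below the spine mutation need not all keep symbol $a$. The paper retains at least half of them at a Markov cost $2u/d$, which integrates to $s^2/d=o(1)$.
\item A competing symbol can reach count $K_r$ through many small clades rather than one large one. The paper handles this with the compound-Poisson Lemma~\ref{lem:compound1}.
\item Under $\mathbb Q_s$ the spine clade is size-biased: $\mathbb Q_s(n_s(p_u)\le k)\asymp k^2e^{-2\lambda u}$ for $k\ll e^{\lambda u}$, not $ke^{-\lambda u}$.
\end{itemize}

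The missing idea is to write $d\,q_U(\mathcal T_s)$ as a tree functional whose exponential moment can actually be estimated. The paper uses the Mecke equation along the spine to get $d\,q_U(\mathcal T_s)\le s^2/d+I_s$, with $I_s=\sum_{k\ge1}\Pi_{\mathcal T_s}(k)^{b-2}(\mathsf T_k-\mathsf T_{k-1})$. Here $\mathsf T_k$ is the time during which the spine clade has at most $2k$ leaves. The quantity $\Pi_{\mathcal T_s}(k)$ is the conditional tail of the total number of leaves below the marks of one competing symbol.

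Writing $I_s$ as a conditional mean of $\mathsf T_{N_{\min}}$ and applying Jensen gives $\E_{\mathbb Q_s}[e^{\alpha I_s}]\le1+\sum_k\E_{\mathbb Q_s}[(e^{\alpha\mathsf T_k}-e^{\alpha\mathsf T_{k-1}})\Pi_{\mathcal T_s}(k)^{b-2}]$. H\"older then decouples the spine from the rest of the tree. It uses $\|e^{\alpha\mathsf T_k}-e^{\alpha\mathsf T_{k-1}}\|_{L^p(\mathbb Q_s)}\ll(1+k)^{\alpha/\lambda-1/p}$ for $p\alpha<2\lambda$, and $\E_{\mathbb Q_s}[\Pi_{\mathcal T_s}(k)^P]\ll(1+k)^{-\gamma}$ for $\gamma<P-1$.

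The restriction on $\vartheta$ therefore comes from a polynomial comparison: growth $k^{\alpha/\lambda}$ against decay $k^{-(b-2)}$. Together with the spine moment condition, this gives $\alpha=\vartheta-1<\lambda\min\{2,b-2\}$; for $b=3$ this is exactly $\vartheta<1+\lambda$. It does not come from a geometric series in $\vartheta/(1+\lambda)$ over mutated coordinates, and your sketch gives no argument that such a series controls the exponential moment.
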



\begin{lemma}\label{lem:ratio1}
If $b>2$, then $R-1<\lambda$.
\end{lemma}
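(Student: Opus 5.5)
The plan is to reduce $R-1<\lambda$ to an explicit inequality in the single variable $q_\star$, using $\Phi(x_\star)=0$ to eliminate $\lambda$, and then to settle that inequality with the standing assumption $\lambda<1$. By \eqref{eq:R}, $R-1=\frac{1+(b-1)q_\star-(1-q_\star)}{1-q_\star}=\frac{bq_\star}{1-q_\star}$. Since $q_\star=e^{-\beta x_\star}$, we have $x_\star=\beta^{-1}\log(1/q_\star)$. The defining relation $\Phi(x_\star)=0$ reads $\lambda x_\star=\log b-\log(1-q_\star)$, so
\[
 \lambda=\frac{\beta\log\bigl(b/(1-q_\star)\bigr)}{\log(1/q_\star)} .
\]
Hence the claim is equivalent to
\begin{equation*}
 b\,q_\star\log(1/q_\star)<\beta(1-q_\star)\log\bigl(b/(1-q_\star)\bigr).
\end{equation*}

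Next I would use $\lambda<1$ to bound $q_\star$. The formula for $\lambda$ gives $\beta\log(b/(1-q_\star))<\log(1/q_\star)$, that is, $q_\star<(1-q_\star)^\beta b^{-\beta}\le b^{-\beta}$. For $b\ge3$ this means $q_\star< 3^{-3/2}<e^{-1}$. The function $q\mapsto q\log(1/q)$ is increasing on $(0,e^{-1})$, so
\[
 b\,q_\star\log(1/q_\star)\le b\cdot b^{-\beta}\cdot\beta\log b=\beta b^{1-\beta}\log b.
\]
For the right-hand side I would use $\log(b/(1-q_\star))\ge\log b$ and $1-q_\star>1-b^{-\beta}$, which give
\[
 \beta(1-q_\star)\log\bigl(b/(1-q_\star)\bigr)>\beta(1-b^{-\beta})\log b .
\]

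It therefore suffices to show $b^{1-\beta}\le 1-b^{-\beta}$, i.e.\ $(b+1)b^{-\beta}\le1$. Since $\beta=1+\frac1{b-1}$, this is $1+\frac1b\le b^{1/(b-1)}$. I would verify it via $b^{1/(b-1)}=\exp(\log b/(b-1))\ge1+\frac{\log b}{b-1}$ together with $\frac{\log b}{b-1}\ge\frac1b$. The latter is $b\log b\ge b-1$, which follows from $\log b\ge 1-1/b$.

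The only delicate point is the middle step. One must check that the upper bound on $q_\star$ coming from $\lambda<1$ lies in the increasing range of $q\log(1/q)$, so that the bound can be evaluated at the endpoint $q=b^{-\beta}$. For $b\ge3$ this is the numerical check $b^{-\beta}<e^{-1}$ made above. All remaining inequalities are elementary, and strictness comes from the strict inequality $q_\star<b^{-\beta}$.
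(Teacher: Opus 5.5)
Your proof is correct, and it takes a different route from the paper's.

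\emph{The paper's route.} It keeps $\lambda$ as the free parameter and rewrites $R-1<\lambda$ as $q_\star<\lambda/(b+\lambda)$. It then uses that $q\mapsto q^{-\lambda/\beta}(1-q)$ is strictly decreasing and equals $b$ at $q_\star$ to reduce the claim to $(b+\lambda)^{1-\lambda/\beta}\lambda^{\lambda/\beta}>1$. Finally it proves $(1-\frac{b-1}{b}\lambda)\log b+\frac{b-1}{b}\lambda\log\lambda>0$ on $(0,1]$ by noting that the derivative $\frac{b-1}{b}(1+\log\lambda-\log b)$ is negative and the value at $\lambda=1$ is positive. That derivative is negative precisely because $\log b>1$, which is where $b\ge3$ enters. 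The case $b=2$ is then handled separately, by a numerical bound, in Lemma~\ref{lem:binary1}.

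\emph{Your route.} You take $q_\star$ as the free parameter and eliminate $\lambda$ through $\Phi(x_\star)=0$. You use $\lambda<1$ only to confine $q_\star$ below $b^{-\beta}$. You then bound the two sides separately, using monotonicity of $q\log(1/q)$ on one side and crude monotone bounds on the other, which leaves the parameter-free inequality $(b+1)b^{-\beta}\le1$.

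\emph{Comparison.} The paper's reduction tracks the exact threshold $q_\star<\lambda/(b+\lambda)$ more closely. Your decoupling is cruder but uniform in $b$. Since $2^{-2}=\tfrac14<e^{-1}$ and $3\cdot2^{-2}\le1$, your argument works verbatim for $b=2$. It therefore also gives the first assertion of Lemma~\ref{lem:binary1} without a case split.

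A minor point: your claim $b^{-\beta}\le3^{-3/2}$ for $b\ge3$ is true, since $b^{-\beta}$ is decreasing in $b$. You do not need it, though, because $b^{-\beta}<b^{-1}\le\tfrac13<e^{-1}$ already suffices.
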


The proof of Lemma~\ref{lem:ratio1} is deferred to
Appendix~\ref{app:ratio1}, and the proof of
Proposition~\ref{prop:load1} is postponed to
Section~\ref{sec:weights1}.  By Lemma~\ref{lem:ratio1}, Proposition~\ref{prop:load1}  applies to every fixed $\vartheta>R$ sufficiently close to $R$.
Before applying it to the lower bound in Theorem~\ref{thm:cover1}, we
extract well-separated
targets that the early population has
not visited.  A set
$\mathcal C$ in the Hamming graph is called \textit{$r$-separated} if
$d_{\rm H}(y,z)\ge r$ for every distinct $y,z\in\mathcal C$.

\begin{lemma}\label{lem:packing1}
For every $\epsilon>0$, there is $B_0<\infty$ such that, for every fixed
$B\ge B_0$, there are constants $h,\delta,C_{\rm path}>0$ and an
$\cF_{s_{B,d}}$-measurable set $\cC_d$ such that, with probability at least
$1-\epsilon$ for all sufficiently large $d$:
\begin{enumerate}[label=(\roman*)]
\item $|\cC_d|\ge e^{hd}$ and $\cC_d$ is $\delta d$-separated;
\item no point of $\cC_d$ was hit before $s_{B,d}$;
\item for every
$u\in V_{s_{B,d}}$ and $y\in\cC_d$,
\[
 d-d_{\rm H}(X_u(s_{B,d}),y)=L_u\le C_{\rm path}\log d.
\]
\end{enumerate}
\end{lemma}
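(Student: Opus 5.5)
The plan is to build $\cC_d$ explicitly from the coordinate classification at time $s_{B,d}$, so that (ii) and the identity in (iii) hold deterministically by construction. Only the cardinality and separation in (i), and the logarithmic bound on $L_u$ in (iii), then require probabilistic input.

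\textbf{Construction.} Call a coordinate \emph{untouched} if no particle has mutated it by time $s_{B,d}$. Let $\cU$ be the set of untouched coordinates. I would define $\cC_d$ as a subset of
\[
\mathcal Y=\{y:\ y_i=c_i \text{ for } i\notin\cU,\ \ y_i\in\{1,\dots,b-1\}\text{ for } i\in\cU\}.
\]
Every $y\in\mathcal Y$ satisfies (ii) and the equality in (iii), by the following three observations.
\begin{itemize}
\item On an unsaturated coordinate $i$, including untouched ones, $y_i$ is a nonzero symbol that no particle has ever carried by time $s_{B,d}$. For $i\in\cU$ every nonzero symbol is unseen, so any choice is allowed. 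For touched unsaturated $i$, $y_i=c_i$ is unseen by definition of $c_i$.
\item Hence, as soon as there is at least one unsaturated coordinate, no particle was ever located at $y$ before $s_{B,d}$. This gives (ii), since being at $y$ is required for a hit.
\item For $u\in V_{s_{B,d}}$, $X_u(s_{B,d})$ agrees with $y$ only on saturated coordinates, and there exactly when $X_u(s_{B,d})(i)=c_i$. Therefore $d-d_{\rm H}(X_u(s_{B,d}),y)=L_u$.
\end{itemize}

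\textbf{Many untouched coordinates.} By \eqref{eq:event1} and \eqref{eq:yule1}, the expected number of mutation events by time $s_{B,d}$ is
\[
\int_0^{s_{B,d}}e^{\lambda t}\dd t\le \lambda^{-1}de^{-\lambda B}.
\]
By Markov's inequality, with probability at least $1-\epsilon/2$, at most $2\lambda^{-1}\epsilon^{-1}de^{-\lambda B}$ coordinates are touched. Choose $B_0$ so that this is at most $d/2$ for $B\ge B_0$. Then $|\cU|\ge d/2$, and in particular some coordinate is unsaturated.

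\textbf{Packing.} Since $b>2$, the alphabet $\{1,\dots,b-1\}$ on $\cU$ has at least two letters, so $|\mathcal Y|\ge 2^{d/2}$. I would apply a greedy Gilbert--Varshamov packing inside $\{1,\dots,b-1\}^{\cU}$. A Hamming ball of radius $\delta d$ has volume at most $\binom{|\cU|}{\le\delta d}(b-2)^{\delta d}\le e^{H(2\delta)d/2+\delta d\log b}$, which for small $\delta$ is exponentially smaller than $2^{d/2}$. This yields a $\delta d$-separated subset of size $e^{hd}$ with $h>0$ depending only on $b$. Distances measured on $\cU$ are lower bounds for full Hamming distances, so (i) holds. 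The choice can be made measurably, for instance lexicographically greedy, so $\cC_d$ is $\cF_{s_{B,d}}$-measurable.

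\textbf{Bound on $L_u$.} By Lemma~\ref{lem:ratio1} we may fix $\vartheta\in(1,1+\lambda)$. Proposition~\ref{prop:load1} together with Markov's inequality gives, with probability at least $1-\epsilon/2$,
\[
\sum_{u}\vartheta^{L_u}\le K_\epsilon d.
\]
Hence $\max_u L_u\le \log(K_\epsilon d)/\log\vartheta\le C_{\rm path}\log d$ for large $d$. A union bound over the two good events completes the argument.

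The only genuinely hard input is Proposition~\ref{prop:load1}, which is assumed here. Within this lemma the delicate point is making sure that the choice of $y$ on touched-but-unsaturated coordinates creates no extra agreements, which is exactly why $c_i$ is taken to be an unseen symbol there.
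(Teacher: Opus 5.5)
Your proof is correct. The construction of $\cC_d$ matches the paper's: least-saturated symbols on touched coordinates, arbitrary nonzero symbols on untouched ones, then greedy packing. So does the Markov bound on touched coordinates. The genuine difference is how you obtain the logarithmic bound in (iii).

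The paper does not use Proposition~\ref{prop:load1} here. It observes that every $y\in\cC_d$ is an antipode, since all its symbols are nonzero. Hence each coordinate on which $X_u(s_{B,d})$ agrees with $y$ must have been mutated along the ancestral line of $u$, so $L_u$ is at most the number of mutations on that lineage. The paper then bounds these counts for all lineages at once by the many-to-one formula and the Poisson--Chernoff bound: $e^{\lambda s_{B,d}}\Pp(\operatorname{Pois}(s_{B,d})\ge C_{\rm path}\log d)=o(1)$. That route is elementary, holds with probability $1-o(1)$, and keeps the lemma independent of the weighted-martingale estimate.

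Your route instead deduces $\max_u L_u\le\log(K_\epsilon d)/\log\vartheta$ from Proposition~\ref{prop:load1} plus Markov's inequality. This is legitimate, since that proposition is proved without this lemma, so there is no circularity. It also shows that (iii) holds automatically on the event \eqref{eq:load2}, which the lower-bound proof imposes anyway. The cost is that it uses the hardest estimate of the section where a short tail bound suffices. You also do not need Lemma~\ref{lem:ratio1} to pick $\vartheta\in(1,1+\lambda)$; that interval is nonempty because $\lambda>0$.

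One minor slip in your packing step. When $|\cU|>d/2$, the radius-$\delta d$ ball in $\{1,\dots,b-1\}^{\cU}$ is not bounded by $e^{H(2\delta)d/2+\delta d\log b}$. The correct bound is $e^{|\cU|H(2\delta)+\delta d\log b}$, and it should be compared with $|\mathcal Y|=(b-1)^{|\cU|}\ge 2^{|\cU|}$ rather than with $2^{d/2}$. Since $|\cU|\ge d/2$, the ratio is still at least $e^{hd}$ with $h=\tfrac12(\log 2-H(2\delta))-\delta\log b>0$ for small $\delta$, so the conclusion stands.
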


\begin{proof}
It suffices to consider $0<\epsilon<1/2$; for larger $\epsilon$, apply the
result with $\epsilon=1/4$. We split the proof into two steps.
\par\smallskip\noindent\emph{Step 1: mutation counts.}
The expected total number of mutations before $s_{B,d}$ is
$\int_0^{s_{B,d}} e^{\lambda r}\dd r\ll d\,e^{-\lambda B}$.
If more than $\epsilon d$ coordinates were touched, there would be more
than $\epsilon d$ mutations.  Hence,
\[
 \Pp\!\left(\#\{\text{touched coordinates}\}>\epsilon d\right)
 \ll \frac{Ce^{-\lambda B}}{\epsilon}.
\]
Choose $B_0$ so that this is at most $\epsilon/2$ for $B\geq B_0$.  By the many-to-one formula \eqref{eq:spine1},
\begin{align*}
 \E\!\left[\#\{u\in V_{s_{B,d}}:\text{the line to }u\text{ has }
 \ge C_{\rm path}\log d\text{ mutations}\}\right]
 &=e^{\lambda s_{B,d}}\Pp\!\left(\operatorname{Pois}(s_{B,d})
 \ge C_{\rm path}\log d\right)\\
 &\le d\left(\frac{e}{\lambda C_{\rm path}}\right)^{C_{\rm path}\log d}
 =o(1)
\end{align*}
for a sufficiently large fixed $C_{\rm path}$, where we used
$s_{B,d}\le\lambda^{-1}\log d$ and the bound \eqref{eq:chernoff1}.  Thus, outside an
event of probability $o(1)$, every ancestral line ending at time $s_{B,d}$ has
at most $C_{\rm path}\log d$ mutations.

\par\smallskip\noindent\emph{Step 2: a packing procedure.}
  On the event from
Step~1, there are at least $(1-\epsilon)d$ untouched coordinates, each
contributing $b-1$ independent choices.  Hence, we have an untouched set with at least
$(b-1)^{(1-\epsilon)d}$ elements, by choosing all directions in the untouched coordinates and the least-saturated directions in the touched ones. 
Lemma~\ref{lem:packing2} below then yields constants $h,\delta>0$
and a $\delta d$-separated subset $\cC_d$ with at least $e^{hd}$ points.
 Every matching coordinate between $X_u(s_{B,d}),y$ requires a
mutation on the corresponding ancestral line, so Step~1 gives
$L_u\le C_{\rm path}\log d$ simultaneously for all $u$.
\end{proof}


\begin{lemma}\label{lem:truncation1}
Fix $\delta,\kappa,C_0>0$ and let $0\leq r_d \le C_0d-\kappa\log d$.  From every particle alive at $r_d$, follow
its descendants for time $\kappa\log d$ and kill each  lineage upon its
$\lfloor\delta d/3\rfloor$th mutation.  For every nonempty, possibly
$\cF_{r_d}$-measurable, $\delta d$-separated set $\cC_d$, the following statements hold:
\begin{enumerate}[label=\textup{(\roman*)}]
\item each truncated descendant cluster hits at most one point of $\cC_d$;
\item $\Pp(\text{truncation deletes a visit to }\cC_d)=o(1)$.
\end{enumerate}
\end{lemma}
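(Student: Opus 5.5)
Part (i) is deterministic, and (ii) is a first-moment bound via the many-to-one identity \eqref{eq:spine1} plus a union over $\cC_d$ carried by the separation. Here is the plan for each part.

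\emph{Part (i).} Fix a particle $v$ alive at $r_d$ and its truncated cluster. Suppose the cluster hits two distinct points $y,z\in\cC_d$, at positions reached by descendants $u_1,u_2$. Follow the path in the Hamming graph from $y$ back along $u_1$'s ancestral line to $X_v(r_d)$, then forward along $u_2$'s line to $z$. By truncation each line after $r_d$ makes at most $\lfloor\delta d/3\rfloor-1$ mutations. Each mutation changes one coordinate, so the triangle inequality gives
\[
 d_{\rm H}(y,z)\le d_{\rm H}(y,X_v(r_d))+d_{\rm H}(X_v(r_d),z)\le 2(\delta d/3)<\delta d.
\]
This contradicts $\delta d$-separation. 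The argument is pathwise, so it holds for every realization and every $\cF_{r_d}$-measurable $\cC_d$.

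\emph{Part (ii).} A visit to $\cC_d$ is deleted only if some lineage makes at least $m:=\lfloor\delta d/3\rfloor$ mutations during $[r_d,r_d+\kappa\log d]$. I would bound the probability that any such lineage exists at all, which avoids summing over the exponentially large $\cC_d$. Conditional on $\cF_{r_d}$, apply \eqref{eq:spine1} to each particle at $r_d$. The expected number of descendants at time $r_d+\kappa\log d$ whose line (possibly extended past the killing) has at least $m$ mutations in the window is
\[
 Z_{r_d}\,d^{\kappa\lambda}\,\Pp(\operatorname{Pois}(\kappa\log d)\ge m).
\]
Lines are never killed by death here (Yule process), so every lineage alive during the window has a descendant alive at its end, and this count dominates the event. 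Taking expectations gives $\E[Z_{r_d}]=e^{\lambda r_d}\le e^{\lambda C_0 d}$. By \eqref{eq:chernoff1}, $\Pp(\operatorname{Pois}(\kappa\log d)\ge m)\le (3e\kappa\log d/(\delta d))^{\delta d/3}=\exp(-(\delta/3)d\log d\,(1+o(1)))$. This superexponential decay beats $e^{\lambda C_0 d}d^{\kappa\lambda}$, so the total is $o(1)$, and Markov's inequality finishes the proof.

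\emph{Main obstacle.} The only delicate point is the crude growth factor $e^{\lambda r_d}$, which can be as large as $e^{\lambda C_0 d}$. The fix is to keep the full $d\log d$ exponent from the Chernoff bound rather than a merely exponential one. One also needs $m\ge\kappa\log d$ for \eqref{eq:chernoff1} to apply, which holds for large $d$. Measurability of $\cC_d$ plays no role, since the bad event does not reference $\cC_d$.
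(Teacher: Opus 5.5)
Your proposal is correct and follows the paper's proof. Part (i) is the same pathwise triangle-inequality argument, and part (ii) bounds the deletion probability by the probability that some lineage reaches $\lfloor\delta d/3\rfloor$ mutations, estimated as $e^{\lambda(r_d+\kappa\log d)}\Pp(\operatorname{Pois}(\kappa\log d)\ge\lfloor\delta d/3\rfloor)\le\exp(Cd-cd\log d)=o(1)$ via the many-to-one identity \eqref{eq:spine1} and the Poisson--Chernoff bound \eqref{eq:chernoff1} (your remark that in the Yule process a lineage reaching the threshold leaves a descendant at the end of the window is a detail the paper leaves implicit).
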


\begin{proof}
(i) This follows immediately from the triangle inequality.

(ii) We bound the deletion probability by the probability that any lineage reaches the threshold. By the many-to-one formula \eqref{eq:spine1}, the latter probability is at most 
\begin{equation}\label{eq:truncation1}
 e^{\lambda(r_d+\kappa\log d)}
 \Pp\!\left(\operatorname{Pois}(\kappa\log d)
 \ge\lfloor\delta d/3\rfloor\right)
 \le \exp(Cd-cd\log d)=o(1).
\end{equation}
Indeed, for all large $d$, $\lfloor\delta d/3\rfloor>\kappa\log d$, and
\eqref{eq:chernoff1} gives the exponent in
\eqref{eq:truncation1}.
\end{proof}

Finally, we need the following elementary occupancy bound.
It is a finite, non-identically distributed version of the second-moment
coupon calculation in \cite[Lemma~1(2)]{StructuredCoupons}: the condition
that each coupon contains at most one target makes the indicators of two
empty boxes negatively correlated.  The proof is deferred to Appendix~\ref{app:boxes1}.

\begin{lemma}\label{lem:boxes1}
Let $\mathcal U$ be a nonempty finite set and let
$(S_v)_v$ be a finite collection of independent random subsets
of $\mathcal U$, each containing at most one point.  Suppose that
$\Pp(y\in S_v)\le p$ for every $v,y$, where $0\le p<1$.  Then, for every
$y\in\mathcal U$,
\begin{equation}\label{eq:boxes1}
 \Pp\!\left(y\notin\bigcup_vS_v\right)
 \ge \exp\!\left(-\frac{\sum_v\Pp(y\in S_v)}{1-p}\right),
\end{equation}
and
\begin{equation}\label{eq:boxes2}
 \Pp\!\left(\bigcup_vS_v=\mathcal U\right)
 \le\left(
 \sum_{y\in\mathcal U}
 \exp\!\left(-\frac{\sum_v\Pp(y\in S_v)}{1-p}\right)
 \right)^{-1}.
\end{equation}
\end{lemma}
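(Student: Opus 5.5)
The plan is to prove \eqref{eq:boxes1} with a product formula and then obtain \eqref{eq:boxes2} from a Paley--Zygmund type bound on the number of uncovered points.

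\textbf{Step 1: the single-point bound \eqref{eq:boxes1}.} By independence of the $S_v$,
\[
\Pp\!\left(y\notin\bigcup_vS_v\right)=\prod_v\bigl(1-\Pp(y\in S_v)\bigr).
\]
Write $p_v(y)=\Pp(y\in S_v)\le p<1$. The elementary inequality $\log(1-x)\ge -x/(1-x)\ge -x/(1-p)$ holds for $x\in[0,p]$; it follows from $-\log(1-x)=\sum_k x^k/k\le \sum_k x^k=x/(1-x)$. Applying it to each factor and summing gives \eqref{eq:boxes1}.

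\textbf{Step 2: the second-moment bound \eqref{eq:boxes2}.} Let $N=\sum_{y\in\cU}\one\{y\notin\bigcup_vS_v\}$ be the number of uncovered points, so that the event $\{\bigcup_vS_v=\cU\}$ is exactly $\{N=0\}$. By Cauchy--Schwarz,
\[
\Pp(N>0)\ge \frac{\E[N]^2}{\E[N^2]},
\]
and hence $\Pp(N=0)\le 1-\E[N]^2/\E[N^2]$. It is more convenient to use the cruder route $\Pp(N=0)\le \mathrm{Var}(N)/\E[N]^2$.

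\textbf{Step 3: negative correlation.} For distinct $y,z\in\cU$, the events $\{y\in S_v\}$ and $\{z\in S_v\}$ are disjoint because each $S_v$ contains at most one point. Therefore
\[
\Pp(y,z\notin S_v)=1-p_v(y)-p_v(z)\le (1-p_v(y))(1-p_v(z)).
\]
Taking the product over $v$ and using independence gives $\Pp(y,z \text{ uncovered})\le \Pp(y\text{ uncovered})\,\Pp(z\text{ uncovered})$. Writing $q_y=\Pp(y\text{ uncovered})$, this yields
\[
\mathrm{Var}(N)\le\sum_y q_y(1-q_y)\le \E[N],
\]
so $\Pp(N=0)\le 1/\E[N]$. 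Finally, $\E[N]=\sum_y q_y$, and bounding each $q_y$ from below by \eqref{eq:boxes1} gives \eqref{eq:boxes2}.

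Both steps are elementary. The only point that needs care is the disjointness argument in Step 3, which is exactly where the hypothesis that each $S_v$ contains at most one point is used. No earlier results of the paper are needed beyond independence.
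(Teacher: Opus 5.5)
Your proposal is correct and follows essentially the same route as the paper: the product formula with $-\log(1-x)\le x/(1-p)$ for \eqref{eq:boxes1}, then pairwise negative correlation from the disjointness of $\{y\in S_v\}$ and $\{z\in S_v\}$, giving $\operatorname{Var}(N)\le\E[N]$ and hence, by Chebyshev, $\Pp(N=0)\le 1/\E[N]$. The only difference is cosmetic: you bound $-\log(1-x)$ by its power series, whereas the paper uses the integral $\int_0^x\frac{\mathrm ds}{1-s}$.
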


\begin{proof}[Proof of the lower bound in Theorem~\ref{thm:cover1}] We split the proof into four steps. 

\par\smallskip\noindent\emph{Step 1: setting up stages.}
Fix $0<\eta<1/4$.  Choose $B$ large enough that the event in
Lemma~\ref{lem:packing1} has probability at least $1-\eta$.  By
Proposition~\ref{prop:load1}, Lemma~\ref{lem:ratio1}, and
Markov's inequality, fix $\vartheta\in(R,1+\lambda)$ and choose
$M<\infty$ so that, outside another event of probability at most $\eta$,
\begin{equation}\label{eq:load2}
 \sum_{u\in V_{s_{B,d}}}\vartheta^{L_u}\le Md.
\end{equation}

For $A>B$, put
\[
 T_{A,d}=x_\star d+\lambda^{-1}\log d-A.
\]
Our goal is to prove
\begin{equation*}
 \lim_{A\to\infty}\limsup_{d\to\infty}
 \Pp\!\left(\tcov(d)\le T_{A,d}\right)=0.
\end{equation*}
Also pick
\begin{equation}\label{eq:window1}
 \frac1\chi<\kappa<\frac1\lambda.
\end{equation}
Let
\begin{equation*}
 r_{A,d}=T_{A,d}-\kappa\log d.
\end{equation*}
We split the post-$s_{B,d}$ evolution of the process at $r_{A,d}$, leaving a terminal
interval of length $\kappa\log d$.

\begin{figure}[ht]
\centering
\begin{tikzpicture}[x=.96\linewidth,y=1cm,>=stealth,
  every node/.style={font=\footnotesize,inner sep=1pt}]
  \draw[->] (.02,0)--(.99,0);
  \foreach \x in {.02,.16,.61,.79,.96}
    \draw (\x,-.07)--(\x,.07);
  \node[below=2pt] at (.02,0) {$0$};
  \node[above=2pt] at (.16,0) {$s_{B,d}$};
  \node[above=2pt,text=gray] at (.61,0) {$x_\star d$};
  \node[above=2pt] at (.79,0) {$r_{A,d}$};
  \node[above=2pt] at (.96,0) {$T_{A,d}$};
  \draw[decorate,decoration={brace,mirror,amplitude=3pt}]
    (.16,-.30)--(.79,-.30)
    node[midway,below=4pt]
    {$r_{A,d}-s_{B,d}=x_\star d-(A-B)-\kappa\log d$};
  \draw[decorate,decoration={brace,mirror,amplitude=3pt}]
    (.79,-.30)--(.96,-.30)
    node[midway,below=4pt]
    {$T_{A,d}-r_{A,d}=\kappa\log d$};
\end{tikzpicture}
\caption{The key time points considered here.}
\label{fig:nonbinary1}
\end{figure}

\par\smallskip\noindent\emph{Step 2: the interval $[s_{B,d}, r_{A,d}]$.}
We condition on $\cF_{s_{B,d}}$ and restrict to the event on which
\eqref{eq:load2} and the conclusions of
Lemma~\ref{lem:packing1} hold.   For $y\in\cC_d$, the branching property,
Lemma~\ref{lem:packing1}(iii), Lemma~\ref{lem:endpoint1}(i), and
\eqref{eq:load2} give
\begin{equation}\label{eq:arrivals1}
\begin{split}
 &\E\!\left[\#\{\text{jumps into }y\text{ during }
 (s_{B,d},r_{A,d}]\}\,\middle|\,\cF_{s_{B,d}}\right]\\
 &\quad=\sum_{u\in V_{s_{B,d}}}
 \E_{X_u(s_{B,d})}\!\left[
 J_y\!\left(x_\star d-(A-B)-\kappa\log d\right)\right]\\
 &\quad\ll e^{-\chi(A-B+\kappa\log d)}
 \sum_{u\in V_{s_{B,d}}}\vartheta^{L_u}\\
 &\quad\le Md\,e^{-\chi(A-B)}d^{-\chi\kappa}
 =o(1).
\end{split}
\end{equation}
Since every target in $\cC_d$
was unvisited at time $s_{B,d}$, define
\begin{equation*}
 \cU_d=\{y\in\cC_d:\tau_y>r_{A,d}\}.
\end{equation*}
Summing \eqref{eq:arrivals1} over $y\in\cC_d$ and applying
conditional Markov's inequality shows that
\begin{equation*}
 \Pp\!\left(
 |\cC_d\setminus\cU_d|>\frac{|\cC_d|}{4}
 \,\middle|\,\cF_{s_{B,d}}\right)=o(1).
\end{equation*}
Thus $|\cU_d|\ge3|\cC_d|/4$ with conditional probability $1-o(1)$.

\par\smallskip\noindent\emph{Step 3: the interval $[r_{A,d},T_{A,d}]$.}
Condition now on $\cF_{r_{A,d}}$.  For each fixed $A$, all
large $d$ satisfy $r_{A,d}\ge0$ and
$r_{A,d}+\kappa\log d=T_{A,d}\le(x_\star+1)d$. Moreover, on the event from Step~2,
$\cU_d$ is nonempty.  Thus, Lemma~\ref{lem:truncation1} applies with
$C_0=x_\star+1$ and the separation constant $\delta$ given by 
Lemma~\ref{lem:packing1}, and we may without loss of generality kill each lineage upon its
$\lfloor\delta d/3\rfloor$th mutation.  In addition, under this truncation, each cluster
hits at most one point of $\cU_d$ by Lemma~\ref{lem:truncation1}.  For each $v\in V_{r_{A,d}}$, let
\[
 S_v=\{y\in\cU_d:\text{the truncated descendants of $v$ hit $y$
 during }[r_{A,d},T_{A,d}]\}.
\]
Conditional on $\cF_{r_{A,d}}$, these sets are independent and $|S_v|\le1$.
Since $X_v(r_{A,d})\ne y$ for every $v\in V_{r_{A,d}}$ and $y\in\cU_d$,
the incoming jump rate to a fixed vertex and the
many-to-one identity \eqref{eq:spine1} give
\begin{equation}\label{eq:probability1}
 \max_{\substack{v\in V_{r_{A,d}}\\y\in\cU_d}}
 \Pp(y\in S_v\mid\cF_{r_{A,d}})
 \le \frac1{d(b-1)}\int_0^{\kappa\log d}e^{\lambda u}\dd u
 \ll d^{\lambda\kappa-1}=o(1),
\end{equation}
where we have used \eqref{eq:window1}.

Note that every hit of $y\in\cU_d$ contains an incoming jump into $y$ by
the cluster.  Since $\cC_d$ is
$\cF_{s_{B,d}}$-measurable, the tower property, Lemma~\ref{lem:packing1}(iii), Lemma~\ref{lem:endpoint1}(i), and \eqref{eq:load2}  give
\begin{equation*}
\begin{split}
 &\E\left[\sum_{y\in\cU_d}\sum_{v\in V_{r_{A,d}}}
 \Pp(y\in S_v\mid\cF_{r_{A,d}})
       \,\middle|\,\cF_{s_{B,d}}\right]
 \\
 &\quad\le\sum_{y\in\cC_d}\sum_{u\in V_{s_{B,d}}}
 \E_{X_u(s_{B,d})}\!\left[J_y\!\left(x_\star d-(A-B)\right)\right]
 \\
 &\quad\ll e^{-\chi(A-B)}
 \sum_{y\in\cC_d}\sum_{u\in V_{s_{B,d}}}\vartheta^{L_u}
 \\
 &\quad\le M|\cC_d|d\,e^{-\chi(A-B)}.
\end{split}
\end{equation*}
Consequently, except on an event of conditional probability at most
$Me^{-\chi(A-B)/2}$,
\begin{equation}\label{eq:terminal2}
 \sum_{y\in\cU_d}\sum_{v\in V_{r_{A,d}}}
 \Pp(y\in S_v\mid\cF_{r_{A,d}})
 \le |\cC_d|d e^{-\chi(A-B)/2}.
\end{equation}
On the event $|\cU_d|\ge3|\cC_d|/4$ obtained in Step~2, define the
$\cF_{r_{A,d}}$-measurable set
$\mathcal L_d:=\{y\in\cU_d:
\sum_{v\in V_{r_{A,d}}}
\Pp(y\in S_v\mid\cF_{r_{A,d}})
\le4d\,e^{-\chi(A-B)/2}\}$.
Equation~\eqref{eq:terminal2} shows that at most
$|\cC_d|/4$ points of $\cU_d$ lie outside $\mathcal L_d$, and hence
$|\mathcal L_d|\ge|\cC_d|/2$.

\par\smallskip\noindent\emph{Step 4: targets with small total hitting probability.}
Conditional on $\cF_{r_{A,d}}$, the random sets $S_v\cap\mathcal L_d$ are
independent and each contains  at most one point.  By 
Lemma~\ref{lem:boxes1} and  \eqref{eq:probability1}, for $d$ large enough, the
conditional probability that every target in $\mathcal L_d$ is hit is at
most
\begin{equation}\label{eq:coverage1}
\begin{split}
 &\left(
 \sum_{y\in\mathcal L_d}
 \exp\!\left(
 -\frac{\sum_v\Pp(y\in S_v\mid\cF_{r_{A,d}})}
 {1-\max_{\substack{v\in V_{r_{A,d}}\\z\in\cU_d}}
 \Pp(z\in S_v\mid\cF_{r_{A,d}})}
 \right)\right)^{-1}\\
 &\quad\le\frac{2}{|\cC_d|}
 \exp\!\left(5d\,e^{-\chi(A-B)/2}\right).
\end{split}
\end{equation}
Choose $A-B$ large enough that $5e^{-\chi(A-B)/2}<h/2$.  By Lemma~\ref{lem:packing1}(i), 
$|\cC_d|\ge e^{hd}$, so 
the right-hand side of \eqref{eq:coverage1} is at most
$e^{-hd/3}$ for all large $d$.

Combining the preceding estimates, we get
\[
 \Pp\!\left(\tcov(d)\le T_{A,d}\right)
 \le2\eta+Me^{-\chi(A-B)/2}+o(1).
\]
Therefore,
\[
 \limsup_{d\to\infty}\Pp\!\left(\tcov(d)\le T_{A,d}\right)
 \le 2\eta+Me^{-\chi(A-B)/2}.
\]
Finally, let $A\to\infty$ with $B,\eta$ fixed, and then let $\eta\downarrow0$.
This proves the lower-tail assertion in Theorem~\ref{thm:cover1}.
\end{proof}
\subsubsection{Proof of Proposition~\ref{prop:load1}}\label{sec:weights1}

Fix $B\ge0$ and take $d$ large enough that $s:=s_{B,d}\ge0$.  We first reduce
the sum over all particles to the spine particle using the change of measure
in \eqref{eq:palm1}. Recall that  for $u\in V_{s_{B,d}}$, $L_u$ is the number of saturated
coordinates $i$ for which $X_u(s_{B,d})(i)=c_i$, where $c_i$ is the least-saturated direction.

\begin{lemma}\label{lem:load1}
Under $\mathbb Q_s$, let $U$ be the spine particle at time $s$, and let
$q_U(\mathcal T_s)$ be the conditional probability that one coordinate is
saturated by time $s$ and $U$ carries its least-saturated direction.  Then, for
every $\vartheta>1$,
\begin{equation}\label{eq:load3}
 \E\!\left[\sum_{u\in V_s}\vartheta^{L_u}\right]
 \le d\,e^{-\lambda B}
 \E_{\mathbb Q_s}\!\left[
 e^{(\vartheta-1)dq_U(\mathcal T_s)}\right].
\end{equation}
\end{lemma}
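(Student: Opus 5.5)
We reduce the sum over particles to a spine computation with the change of measure \eqref{eq:palm1}, and then use that the $d$ coordinates behave conditionally independently given the genealogical tree. By \eqref{eq:palm1},
\[
 \E\Big[\sum_{u\in V_s}\vartheta^{L_u}\Big]=e^{\lambda s}\,\E_{\mathbb Q_s}\big[\vartheta^{L_U}\big],
 \qquad e^{\lambda s}=d\,e^{-\lambda B}.
\]
So it suffices to show $\E_{\mathbb Q_s}[\vartheta^{L_U}\mid\mathcal T_s]\le e^{(\vartheta-1)dq_U(\mathcal T_s)}$ and then take the $\mathbb Q_s$-expectation.

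Fix the tree $\mathcal T_s$ and the spine leaf $U$. In the continuous-time walk with total rate one, the mutation events are a Poisson process of rate one along each branch. Each mutation independently picks a uniform coordinate and a uniform new symbol. By Poisson thinning, the mutations affecting coordinate $i$ form independent rate-$1/d$ Poisson processes on the tree, one for each $i$, and the symbol changes on different coordinates are independent. The whole configuration of coordinate $i$ at the leaves is therefore an i.i.d.\ (over $i$) functional of $\mathcal T_s$. This configuration determines:
\begin{itemize}
\item whether coordinate $i$ is saturated;
\item its least-saturated direction $c_i$ (computed from the leaves' symbols and the symbols seen along the tree on that coordinate);
\item whether $X_U(s)(i)=c_i$.
\end{itemize}
Hence, conditionally on $(\mathcal T_s,U)$, the indicators $\xi_i=\one\{i\text{ saturated},\,X_U(s)(i)=c_i\}$ are i.i.d.\ Bernoulli with parameter $q_U(\mathcal T_s)$. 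Since $L_U=\sum_i\xi_i$, we get
\[
 \E_{\mathbb Q_s}\big[\vartheta^{L_U}\mid\mathcal T_s,U\big]=\big(1+(\vartheta-1)q_U\big)^d\le e^{(\vartheta-1)dq_U},
\]
using $1+x\le e^x$. Taking expectations gives \eqref{eq:load3}.

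The step needing the most care is the conditional independence across coordinates. Under $\mathbb Q_s$ the tree is size-biased, but the spatial motion given the tree is unchanged, because the Radon–Nikodym factor in \eqref{eq:palm1} depends only on $(\mathcal T_s,U)$. The saturation status and $c_i$ depend only on the history of coordinate $i$, which includes symbols seen at any time before $s$ on that coordinate, not just at the leaves. We also need the Poisson splitting of the rate-one mutation clock into $d$ independent rate-$1/d$ clocks to hold jointly over all branches of the tree; this follows from the standard marking theorem applied branch by branch.
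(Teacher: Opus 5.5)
Your proof is correct and follows essentially the same route as the paper's. Both use the spine change of measure \eqref{eq:palm1} and conditional independence of the $d$ coordinates given $(\mathcal T_s,U)$, which gives $e^{\lambda s}\E_{\mathbb Q_s}[(1+(\vartheta-1)q_U)^d]$. Then $1+x\le e^x$ and $e^{\lambda s}=d\,e^{-\lambda B}$ finish the argument; you additionally justify the conditional-independence and unchanged-spatial-law steps that the paper leaves implicit.
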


\begin{proof}
Condition on the Yule tree $\mathcal T_s$.  Applying the
spine change of measure \eqref{eq:palm1} and independence across coordinates gives
\begin{equation*}
 \E\!\left[\sum_u\vartheta^{L_u}\right]
 =e^{\lambda s}\E_{\mathbb Q_s}\!\left[
 (1+(\vartheta-1)q_U(\mathcal T_s))^d\right]\le d\,e^{-\lambda B}\E_{\mathbb Q_s}\!\left[
 e^{(\vartheta-1)dq_U(\mathcal T_s)}\right].
\end{equation*}
This proves \eqref{eq:load3}.
\end{proof}


\paragraph{Graphical construction.}
For a Yule tree $\mathcal T_s$, let us identify each genealogical edge with the time interval
between its endpoints, and let $\ell_{\mathcal T_s}$ be the resulting length
measure.
For each $j\in\{0,1,\ldots,b-1\}$, independently place a Poisson point
process $\mathcal P_j$ on the tree with intensity
$\ell_{\mathcal T_s}/(d(b-1))$, corresponding to the mutation events.
Call each point of $\mathcal P_j$ a $j$-mark.   For a point $p$ of the tree, let $n_s(p)$ be the number of leaves at time $s$ that  descend from $p$.  

Imagine propagating the symbol $0$ from the root
down the tree.  At a $j$-mark, replace the current symbol by $j$ if it differs from $j$ (we call it a \textit{retained mark});
otherwise ignore the point (we call it an \textit{ignored mark}).  Along each edge, the marks are
applied in chronological order, and at a split both children inherit the
symbol immediately before the split. 
See Figure~\ref{fig:tree1} for an illustration.

\begin{figure}[ht]
\centering
\begin{tikzpicture}[x=1cm,y=1cm,>=stealth,
  every node/.style={font=\footnotesize},
  symzero/.style={draw=gray!75!black,line width=1.15pt},
  symone/.style={draw=blue!70!black,line width=1.15pt},
  symtwo/.style={draw=orange!85!black,line width=1.15pt}]
  \draw[->,gray!75] (.45,2.55)--(10.55,2.55)
    node[above left,text=black] {time};
  \node[anchor=north] at (.45,2.55) {$0$};
  \node[anchor=north] at (10.25,2.55) {$s$};

  \coordinate (root) at (.45,0);
  \coordinate (a) at (1.55,0);
  \coordinate (splitone) at (3.00,0);
  \coordinate (ignored) at (4.05,.43);
  \coordinate (p) at (5.20,.91);
  \coordinate (splittwo) at (6.35,1.38);
  \coordinate (upperone) at (10.25,1.92);
  \coordinate (uppertwo) at (10.25,.72);
  \coordinate (b) at (4.35,-.62);
  \coordinate (splitthree) at (6.35,-1.27);
  \coordinate (lowerone) at (10.25,-.72);
  \coordinate (c) at (8.25,-1.61);
  \coordinate (lowertwo) at (10.25,-1.92);

  \draw[symzero] (root)--(a);
  \draw[symone] (a)--(splitone)--(ignored)--(p);
  \draw[symtwo] (p)--(splittwo)--(upperone);
  \draw[symtwo] (splittwo)--(uppertwo);
  \draw[symone] (splitone)--(b);
  \draw[symzero] (b)--(splitthree)--(lowerone);
  \draw[symzero] (splitthree)--(c);
  \draw[symtwo] (c)--(lowertwo);

  \fill (root) circle (1.7pt);
  \fill (splitone) circle (1.7pt);
  \fill (splittwo) circle (1.7pt);
  \fill (splitthree) circle (1.7pt);
  \node[anchor=east,align=right] at (.33,0) {$\emptyset$\\symbol $0$};
  \node[circle,draw=orange!85!black,fill=white,minimum size=5mm,inner sep=0pt]
    at (upperone) {$2$};
  \node[circle,draw=orange!85!black,fill=white,minimum size=5mm,inner sep=0pt]
    at (uppertwo) {$2$};
  \node[circle,draw=gray!75!black,fill=white,minimum size=5mm,inner sep=0pt]
    at (lowerone) {$0$};
  \node[circle,draw=orange!85!black,fill=white,minimum size=5mm,inner sep=0pt]
    at (lowertwo) {$2$};

  \fill[blue!70!black] (a) circle (2.4pt);
  \node[above=3pt,text=blue!70!black] at (a)
    {$\mathcal P_1:0\to1$};

  \draw[blue!70!black,fill=white,line width=.8pt] (ignored) circle (2.8pt);
  \draw[blue!70!black,line width=.7pt]
    ([xshift=-2pt,yshift=-2pt]ignored)--([xshift=2pt,yshift=2pt]ignored);
  \draw[blue!70!black,line width=.7pt]
    ([xshift=-2pt,yshift=2pt]ignored)--([xshift=2pt,yshift=-2pt]ignored);
  \node[anchor=east,align=right,text=blue!70!black] at (3.88,.92)
    {$\mathcal P_1$\\ignored};

  \fill[orange!85!black] (p) circle (2.4pt);
  \node[anchor=south,text=orange!85!black] at (5.15,1.48)
    {$p\in\mathcal P_2:1\to2$};

  \fill[gray!75!black] (b) circle (2.4pt);
  \node[below=3pt,text=gray!75!black] at (b)
    {$\mathcal P_0:1\to0$};

  \fill[orange!85!black] (c) circle (2.4pt);
  \node[below=3pt,text=orange!85!black] at (c)
    {$\mathcal P_2:0\to2$};

  \draw[decorate,decoration={brace,mirror,amplitude=4pt},orange!85!black]
    (10.62,.72)--(10.62,1.92)
    node[midway,right=6pt,align=left,text=black]
    {$n_s(p)=2$\\descendant leaves};

  \node[anchor=west,font=\scriptsize] at (.45,-2.88) {edge symbols:};
  \draw[symzero] (2.65,-2.88)--(3.10,-2.88);
  \node[anchor=west] at (3.20,-2.88) {$0$};
  \draw[symone] (3.75,-2.88)--(4.20,-2.88);
  \node[anchor=west] at (4.30,-2.88) {$1$};
  \draw[symtwo] (4.85,-2.88)--(5.30,-2.88);
  \node[anchor=west] at (5.40,-2.88) {$2$};
  \fill[black] (6.65,-2.88) circle (2.4pt);
  \node[anchor=west] at (6.80,-2.88) {retained mark};
  \draw[black,fill=white,line width=.7pt] (9.25,-2.88) circle (2.8pt);
  \draw[line width=.65pt]
    (9.18,-2.95)--(9.32,-2.81) (9.18,-2.81)--(9.32,-2.95);
  \node[anchor=west] at (9.42,-2.88) {ignored mark};
\end{tikzpicture}
\caption{The graphical construction for a single coordinate. The second $\mathcal P_1$ is ignored because the current symbol is already
$1$ before that event. }
\label{fig:tree1}
\end{figure}
Next, we collect a few simple facts regarding the count $n_s(\cdot)$. 
For each $j$, the sum $\sum_{p\in\mathcal P_j}n_s(p)$ counts every descendant below every $j$-mark.  Conditional on
$\mathcal T_s$, the sums are independent and identically distributed for different $j$.
Write $M^{\rm dom}$ for a random variable with their common conditional law,
and put
\begin{equation}\label{eq:dom}
 \Pi_{\mathcal T_s}(k)
 :=\Pp(M^{\rm dom}\ge k\mid\mathcal T_s),\qquad k\ge1.
\end{equation}
For every $j\ne0$, the number of  leaves at time $s$ carrying symbol $j$ therefore satisfies
\begin{equation}\label{eq:terminal3}
 \#\{w\in V_s:X_w(s)=j\}
 \le\sum_{p\in\mathcal P_j}n_s(p).
\end{equation}

For $0\le u\le s$, let $p_u$ be the point on the ancestral line of $U$ at
time $s-u$.  Then $n_s(p_u)$ is nondecreasing in $u$.  Under $\mathbb Q_s$,
its distribution is
\begin{equation}\label{eq:tagged1}
 \mathbb Q_s\bigl(n_s(p_u)=n\bigr)
 =n e^{-2\lambda u}(1-e^{-\lambda u})^{n-1},
 \qquad n\ge1.
\end{equation}
To see this, condition at time $s-u$.  Each particle there roots an
independent Yule cluster of duration $u$.   By
\eqref{eq:yule2}, an ordinary cluster has probability
$e^{-\lambda u}(1-e^{-\lambda u})^{n-1}$ of size $n$ and mean
$e^{\lambda u}$; size-biasing gives \eqref{eq:tagged1}.

For $a\in\{1,\ldots,b-1\}$, let
$\Pp^{\mathcal T_s,U;a,u}$ denote the law of the point processes just
constructed after one additional point has been inserted in $\mathcal P_a$
at $p_u$,
and let $\E^{\mathcal T_s,U;a,u}$ denote the corresponding expectation.
Under this law, let $E_{a,u}$ be the event that
\begin{enumerate}[label=(\roman*),leftmargin=*]
\item the inserted point is retained;
\item it is the last actual transition into $a$ on the ancestral line of
$U$; and
\item the coordinate is saturated by time $s$, $a$ is its least-saturated
direction, and $U$ carries $a$ at time $s$.
\end{enumerate}
The Mecke equation \eqref{eq:mecke1} then relates $q_U(\mathcal T_s)$ to the probability of
$E_{a,u}$ as follows.

\begin{lemma}\label{lem:load2}
In the above setting,
\begin{equation}\label{eq:entry1}
 q_U(\mathcal T_s)
 =\frac1{d(b-1)}\sum_{a=1}^{b-1}\int_0^s
   \Pp^{\mathcal T_s,U;a,u}(E_{a,u})\dd u.
\end{equation}
\end{lemma}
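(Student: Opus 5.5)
The plan is to decompose the event underlying $q_U(\mathcal T_s)$ according to the last retained mark on the spine, and then convert the resulting sum over marks into an integral with the Mecke equation \eqref{eq:mecke1}.

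Fix a single coordinate and condition on $(\mathcal T_s,U)$. Write $G$ for the event that the coordinate is saturated by time $s$ and $U$ carries its least-saturated direction. On $G$, $U$ carries a nonzero symbol $a=c_i$. Since the propagated symbol starts at $0$, some retained $a$-mark lies on the ancestral line of $U$. Take the last actual transition into $a$ along this line, that is, the last retained mark on the spine; it is an $a$-mark because $U$ ends with symbol $a$. This mark is a uniquely determined point $p\in\mathcal P_a$ located on the spine, say at $p=p_u$ for some $u\in[0,s]$. Hence
\[
 \one_G=\sum_{a=1}^{b-1}\sum_{p\in\mathcal P_a}\one\{p\text{ lies on the spine},\ p\text{ is retained and is the last transition into }a\text{ on the spine},\ G,\ c_i=a,\ U\text{ carries }a\}.
\]
At most one term is nonzero. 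For a given $a$, only a retained mark that is the last transition into $a$ qualifies, and this mark is unique.

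Next, take the conditional expectation given $(\mathcal T_s,U)$ and apply the Mecke equation to $\mathcal P_a$. Its intensity is $\ell_{\mathcal T_s}/(d(b-1))$, and the other processes $\mathcal P_j$, $j\ne a$, are independent of it. Each summand has the form $F(p,\mathcal P_a)$, with the remaining processes treated as parameters. By \eqref{eq:mecke1}, its expectation equals the integral over points $p$ of the tree, against $\ell_{\mathcal T_s}/(d(b-1))$, of the probability of the same event when $\delta_p$ is added to $\mathcal P_a$.

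The indicator forces $p$ to lie on the spine. The spine has length measure equal to Lebesgue measure in the parametrization $p_u$, $u\in[0,s]$. So the integral reduces to
\[
 \frac1{d(b-1)}\int_0^s\Pp^{\mathcal T_s,U;a,u}(E_{a,u})\dd u .
\]
Here the event after insertion is exactly $E_{a,u}$, by conditions (i)–(iii). Summing over $a$ gives \eqref{eq:entry1}.

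The main point to check is that the event attached to the inserted point matches $E_{a,u}$ exactly. Whether the inserted point is retained, and whether it is the last transition into $a$, must be evaluated in the augmented configuration $\mathcal P_a+\delta_{p_u}$; the Mecke equation provides precisely this. One must also confirm that the decomposition above has no double counting, which holds because the last retained transition into $a$ on the spine is unique. Finally, the spine has $\ell_{\mathcal T_s}$-measure zero overlap with split points, so ambiguities at branch times do not matter.
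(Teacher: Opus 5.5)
Your proposal is correct and follows essentially the same route as the paper. The paper also writes the indicator of your event $G$ pathwise as a sum, over $a\in\{1,\ldots,b-1\}$ and over points $p\in\mathcal P_a$ on the spine $[\varnothing,U]$, of the indicator that $p$ is retained, is the unique last actual transition into $a$, and that $a$ is the least-saturated direction carried by $U$; it then applies the Mecke equation \eqref{eq:mecke1} conditionally on $(\mathcal T_s,U)$, with intensity $\ell_{\mathcal T_s}/(d(b-1))$, and parametrizes the spine by $p_u$, $u\in[0,s]$. Your extra remarks on uniqueness, on evaluating retention in the augmented configuration, and on null sets at branch points only make explicit what the paper leaves implicit.
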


\begin{proof}
Let $\varnothing$ denote the root and $[\varnothing,w]$ the unique
 path from $\varnothing$ to $w$.  For $a\in\{1,\ldots,b-1\}$, let
\[
 \mathcal A_a
 :=\{\text{the coordinate is saturated by time $s$, $a$ is its
 least-saturated direction, and $X_U(s)=a$}\}.
\]
Pathwise,
\[
 \one_{\bigcup_{a=1}^{b-1}\mathcal A_a}
 =
 \sum_{a=1}^{b-1}
 \sum_{p\in\mathcal P_a\cap[\varnothing,U]}
 \one_{\mathcal A_a}
 \one_{\{p\text{ is retained}\}}
 \one_{\{p\text{ is the last actual transition into }a\}}.
\]
Indeed, on $\mathcal A_a$ there is a unique last actual transition into
$a$ on $[\varnothing,U]$, while all terms vanish outside
$\bigcup_a\mathcal A_a$.  Therefore,
\[
\begin{aligned}
 q_U(\mathcal T_s)
 &=
 \sum_{a=1}^{b-1}
 \E\!\left[
 \sum_{p\in\mathcal P_a\cap[\varnothing,U]}
 \one_{\mathcal A_a}
 \one_{\{p\text{ is retained}\}}
 \one_{\{p\text{ is the last actual transition into }a\}}
 \,\middle|\,\mathcal T_s,U\right]  \\
 &=
 \frac1{d(b-1)}
 \sum_{a=1}^{b-1}\int_0^s
 \E^{\mathcal T_s,U;a,u}\!\left[\one_{E_{a,u}}\right]\dd u =
 \frac1{d(b-1)}
 \sum_{a=1}^{b-1}\int_0^s
 \Pp^{\mathcal T_s,U;a,u}(E_{a,u})\dd u,
\end{aligned}
\]
where the second equality follows from the Mecke equation
\eqref{eq:mecke1}, since each process $\mathcal P_a$ has intensity
$\ell_{\mathcal T_s}/(d(b-1))$ by construction.  This proves \eqref{eq:entry1}.
\end{proof}

To further bound $\Pp^{\mathcal T_s,U;a,u}(E_{a,u})$, observe that if the added point
accounts for many terminal leaves carrying $a$, then every other nonzero
symbol must occur at least as often because $a$ is least saturated.  The
pathwise bound \eqref{eq:terminal3} then turns this
condition into independent tail events.

Define
\begin{equation}\label{eq:tagged2}
 \mathsf T_k:=\int_0^s
 \one_{\{n_s(p_u)\le2k\}}\dd u,\quad k\geq 1;
 \qquad \mathsf T_0:=0
\end{equation}
and 
\begin{equation}\label{eq:auxiliary1}
 I_s:=\sum_{k\ge1}\Pi_{\mathcal T_s}(k)^{b-2}
 (\mathsf T_k-\mathsf T_{k-1}),
\end{equation}

\begin{lemma}\label{lem:load3}
Fix $a\in\{1,\ldots,b-1\}$ and $0\le u\le s$, and let $k\in\mathbb{N}$ be such that $n_s(p_u)\in\{2k-1,2k\}$.  Then
\begin{equation}\label{eq:inserted1}
 \Pp^{\mathcal T_s,U;a,u}(E_{a,u})
 \le\frac{2u}{d}+\Pi_{\mathcal T_s}(k)^{b-2}.
\end{equation}
 In particular, 
\begin{equation}\label{eq:clean1}
 d\,q_U(\mathcal T_s)\le\frac{s^2}{d}+I_s.
\end{equation}
\end{lemma}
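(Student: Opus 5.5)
We prove \eqref{eq:inserted1} by splitting $E_{a,u}$ according to whether the inserted $a$-mark at $p_u$ has its whole subtree of $n_s(p_u)$ leaves carrying $a$ at time $s$. The first bound, $2u/d$, will come from the case where some other retained mark hits the subtree of $p_u$ after time $s-u$. The second bound, $\Pi_{\mathcal T_s}(k)^{b-2}$, will come from the case where it does not.

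\textbf{The $2u/d$ term.} Let $F$ be the event that some point of $\mathcal P_0\cup\dots\cup\mathcal P_{b-1}$ lies on the ancestral line of $U$ strictly after $p_u$, that is, in the time interval $(s-u,s]$. This segment has length $u$, and the total mark intensity is $b/(d(b-1))\le 2/d$. So
\[
\Pp^{\mathcal T_s,U;a,u}(F)\le \frac{bu}{d(b-1)}\le\frac{2u}{d}.
\]
Marks elsewhere in the subtree of $p_u$ would only reduce the count of $a$'s, so they need not be excluded. On $F^c$, conditions (i) and (ii) force $U$ to carry $a$ at time $s$, but they do not force the other leaves below $p_u$ to carry $a$. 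The key observation is different. On $E_{a,u}$, the coordinate is saturated and $a$ is least saturated. Hence for every $j\notin\{0,a\}$ we have
\[
\#\{w: X_w(s)=j\}\ \ge\ \#\{w: X_w(s)=a\}\ \ge\ 1.
\]

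\textbf{The $\Pi_{\mathcal T_s}(k)^{b-2}$ term.} This is the main obstacle. We need a lower bound on $\#\{w:X_w(s)=a\}$ of order $k$. One way to get it is to bound the count of $a$-leaves below $p_u$ by $n_s(p_u)$ minus the leaves hit by later marks. Instead, we plan a cleaner route via symmetry between $a$ and the other symbols, as follows.

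On $E_{a,u}$, each $j\ne 0,a$ satisfies, by \eqref{eq:terminal3},
\[
\sum_{p\in\mathcal P_j}n_s(p)\ \ge\ \#\{X_w(s)=j\}\ \ge\ \#\{X_w(s)=a\}.
\]
We must show $\#\{X_w(s)=a\}\ge k$, up to an event already counted in $F$. Consider the leaves below $p_u$. Each leaf $w$ loses symbol $a$ only if a mark lies on the path from $p_u$ to $w$. We can therefore compare "the subtree is mostly $a$" against marks in the subtree. A cruder fallback is to accept the event where, for every $j\ne0,a$, the sum $\sum_{p\in\mathcal P_j}n_s(p)$ is at least $\lceil n_s(p_u)/2\rceil\ge k$. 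Failure of this means some $j$-mark covers fewer leaves, which would still require at least $n_s(p_u)/2$ of the $p_u$-leaves to avoid $a$. That in turn requires additional marks in the subtree. I expect this comparison to be the delicate point, and I will handle it by a monotone coupling: remove the inserted $a$-mark and compare.

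Once the tail event holds, the $b-2$ processes $\mathcal P_j$, $j\ne0,a$, are independent of the inserted point and of each other given $\mathcal T_s$, so the probability factorises into $\Pi_{\mathcal T_s}(k)^{b-2}$. Here independence holds under the Palm law because only $\mathcal P_a$ was modified. This gives \eqref{eq:inserted1}.

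\textbf{Deducing \eqref{eq:clean1}.} Insert \eqref{eq:inserted1} into \eqref{eq:entry1} and multiply by $d$. The first term contributes
\[
\frac{1}{b-1}\sum_{a=1}^{b-1}\int_0^s \frac{2u}{d}\,\mathrm du=\frac{s^2}{d}.
\]
The second term contributes $\int_0^s \Pi_{\mathcal T_s}(k(u))^{b-2}\,\mathrm du$, where $k(u)=\lceil n_s(p_u)/2\rceil$. The set $\{u: k(u)=k\}=\{u: 2k-2<n_s(p_u)\le 2k\}$ has Lebesgue measure $\mathsf T_k-\mathsf T_{k-1}$. Summing over $k$ gives exactly $I_s$ from \eqref{eq:auxiliary1}.
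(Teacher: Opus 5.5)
Your deduction of \eqref{eq:clean1} from \eqref{eq:inserted1} is correct. So is your remark that only $\mathcal P_a$ is modified under $\Pp^{\mathcal T_s,U;a,u}$, which keeps the $b-2$ processes $\mathcal P_j$, $j\notin\{0,a\}$, independent. The gap is in \eqref{eq:inserted1} itself. To obtain $\Pi_{\mathcal T_s}(k)^{b-2}$ rather than $\Pi_{\mathcal T_s}(1)^{b-2}$, you must show that, outside an event of probability at most $2u/d$, at least $k=\lceil n_s(p_u)/2\rceil$ leaves carry $a$ at time $s$. Only then do the least-saturated condition and \eqref{eq:terminal3} give $\sum_{p\in\mathcal P_j}n_s(p)\ge k$ for every $j\notin\{0,a\}$.

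Your exceptional event $F$ only concerns marks on the ancestral line of $U$. You explicitly say that marks elsewhere in the subtree of $p_u$ ``need not be excluded.'' That logic is backwards: you need a \emph{lower} bound on the $a$-count, and those off-spine marks are exactly what can push it below $k$. On $F^{\mathrm c}\cap E_{a,u}$ the number of $a$-leaves can be as small as $1$, for instance when only $U$ keeps $a$. So the claim ``$\#\{X_w(s)=a\}\ge k$ up to an event already counted in $F$'' is false. The later ``cruder fallback'' and ``monotone coupling'' remarks never give a quantitative bound on the probability that many leaves below $p_u$ lose $a$.

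The missing idea is a first-moment bound over the whole subtree of $p_u$, not just the spine.
\begin{itemize}
\item Let $C_u$ be the number of leaves below $p_u$ whose path from time $s-u$ has no further actual (retained) mutation of the coordinate. If the inserted point is retained, each such leaf carries $a$.
\item Each such path has length $u$. While the symbol is $a$, the actual mutation rate is $(b-1)\cdot\frac{1}{d(b-1)}=\frac1d$. These later marks are independent of whether the inserted point is retained.
\item Hence $\E[n_s(p_u)-C_u\mid\text{retained}]=n_s(p_u)(1-e^{-u/d})\le n_s(p_u)u/d$.
\item Markov's inequality then bounds the probability of $\{C_u<n_s(p_u)/2\}$ by $2u/d$.
\item On $E_{a,u}\cap\{C_u\ge n_s(p_u)/2\}$, at least $k$ leaves carry $a$. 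Your independence argument then yields $\Pi_{\mathcal T_s}(k)^{b-2}$.
\end{itemize}
With this event in place of $F$, which becomes unnecessary, your proof goes through. This is exactly the paper's argument.
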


\begin{proof}
In the configuration with the additional point, write $C_u$ for the number
of descendants at time $s$ for which the inserted point is retained and the
path from time $s-u$ to the descendant has no further mutation of this coordinate.
By independence of the exponential clocks before and after the inserted point,
\begin{equation}\label{eq:unclean1}
 \E^{\mathcal T_s,U;a,u}\!\left[
 n_s(p_u)-C_u\,\middle|\,
 \text{the inserted point is retained}\right]
 =n_s(p_u)(1-e^{-u/d})\le\frac ud n_s(p_u).
\end{equation}
Note that the event $C_u<n_s(p_u)/2$ implies
$n_s(p_u)-C_u>n_s(p_u)/2$.  Conditional Markov's
inequality and \eqref{eq:unclean1} therefore give
\begin{equation}\label{eq:unclean2}
 \Pp^{\mathcal T_s,U;a,u}\!\left(
 C_u<\frac{n_s(p_u)}2
 \,\middle|\,\text{the inserted point is retained}\right)
 \le \frac{u n_s(p_u)/d}{n_s(p_u)/2}
 =\frac{2u}{d}.
\end{equation}
Since $E_{a,u}$ includes retention of the inserted point,
\eqref{eq:unclean2} yields
\begin{equation}\label{eq:unclean3}
 \Pp^{\mathcal T_s,U;a,u}
 \left(E_{a,u}\cap
 \left\{C_u<\frac{n_s(p_u)}2\right\}\right)
 \le\frac{2u}{d}.
\end{equation}
On the event $E_{a,u}\cap\{C_u\ge n_s(p_u)/2\}$, the terminal count of 
$a$ is at least $k=\lceil n_s(p_u)/2\rceil$.  Since $a$ is the least-saturated direction,
every $j\in\{1,\ldots,b-1\}\setminus\{a\}$ has terminal count of at least
$k$.  The pathwise inequality
\eqref{eq:terminal3} therefore gives
\begin{equation}\label{eq:clean2}
 E_{a,u}\cap\left\{C_u\ge\frac{n_s(p_u)}2\right\}
 \subseteq
 \bigcap_{\substack{1\le j\le b-1\\j\ne a}}
 \left\{\sum_{p\in\mathcal P_j}n_s(p)\ge k\right\}.
\end{equation}
By independence, \eqref{eq:unclean3} and \eqref{eq:clean2} together give
\eqref{eq:inserted1}.

By definition, for each $k$, the set of times for which
$n_s(p_u)\in\{2k-1,2k\}$ has length
$\mathsf T_k-\mathsf T_{k-1}$.   Inserting
\eqref{eq:inserted1} into Lemma~\ref{lem:load2} and summing over $a$ now gives
\[
 d\,q_U(\mathcal T_s)
 \le\frac1{b-1}\sum_{a=1}^{b-1}
 \left(\int_0^s\frac{2u}{d}\dd u+I_s\right)
 =\frac{s^2}{d}+I_s,
\]
which proves \eqref{eq:clean1}.
\end{proof}

In view of Lemmas \ref{lem:load1} and \ref{lem:load3}, it remains to control exponential moments of $I_s$. This will be the goal of the next few lemmas.

\begin{lemma}\label{lem:auxiliary1}
For every
$\alpha>0$,
\begin{equation}\label{eq:auxiliary2}
 \E_{\mathbb Q_s}[e^{\alpha I_s}]
 \le1+\sum_{k\ge1}
 \E_{\mathbb Q_s}\!\left[
 (e^{\alpha\mathsf T_k}-e^{\alpha\mathsf T_{k-1}})
 \Pi_{\mathcal T_s}(k)^{b-2}\right].
\end{equation}
\end{lemma}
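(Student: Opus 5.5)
This is a deterministic inequality applied pointwise in the tree, followed by taking $\mathbb Q_s$-expectations. Fix $\mathcal T_s$ and $U$. The sequence $\mathsf T_k$ is nondecreasing in $k$, starts at $\mathsf T_0=0$, and is eventually equal to $s$ (once $2k\ge Z_s$). The weights $\pi_k:=\Pi_{\mathcal T_s}(k)^{b-2}$ lie in $[0,1]$ and are nonincreasing in $k$, because $\Pi_{\mathcal T_s}$ is a tail probability. Write $\Delta_k=\mathsf T_k-\mathsf T_{k-1}\ge0$. Then $I_s=\sum_k\pi_k\Delta_k$ and $\mathsf T_k=\sum_{j\le k}\Delta_j$. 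The target inequality, pathwise, is
\[
 e^{\alpha\sum_k\pi_k\Delta_k}\le 1+\sum_{k\ge1}\pi_k\bigl(e^{\alpha\mathsf T_k}-e^{\alpha\mathsf T_{k-1}}\bigr).
\]
Taking expectations under $\mathbb Q_s$ then gives \eqref{eq:auxiliary2}. Interchanging sum and expectation is harmless because all terms are nonnegative.

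To prove the pathwise inequality, I would use a layer-cake (Abel summation) decomposition in the variable $\pi$. Set $\pi_0:=1$. Since the $\pi_k$ are nonincreasing, we can write $\pi_k=\sum_{m\ge k}(\pi_m-\pi_{m+1})$, with the convention that $\pi_m\to\pi_\infty\ge0$. The cleanest form, though, is a convexity argument. Define $F(t)=e^{\alpha t}$ and the partial sums $I^{(n)}=\sum_{k\le n}\pi_k\Delta_k$. I would prove by induction on $n$ that
\[
 e^{\alpha I^{(n)}}\le 1+\sum_{k\le n}\pi_k\bigl(e^{\alpha\mathsf T_k}-e^{\alpha\mathsf T_{k-1}}\bigr).
\]
The case $n=0$ is trivial. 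For the inductive step it suffices to show
\[
 e^{\alpha I^{(n)}}-e^{\alpha I^{(n-1)}}\le \pi_n\bigl(e^{\alpha\mathsf T_n}-e^{\alpha\mathsf T_{n-1}}\bigr).
\]
This has two parts:
\begin{itemize}
\item By convexity of $F$, the increment of $F$ over the interval $[I^{(n-1)},I^{(n-1)}+\pi_n\Delta_n]$ is at most $\pi_n$ times the increment over the interval $[I^{(n-1)},I^{(n-1)}+\Delta_n]$. This holds because $F(x+\pi h)-F(x)\le\pi(F(x+h)-F(x))$ for $\pi\in[0,1]$, $h\ge0$, as $F$ is convex and increasing.
\item That increment is in turn at most $F(\mathsf T_n)-F(\mathsf T_{n-1})$. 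This holds because $I^{(n-1)}\le\mathsf T_{n-1}$ (each $\pi_k\le1$) and $F$ has increasing increments for a fixed step length $\Delta_n$.
\end{itemize}
Letting $n\to\infty$ by monotone convergence gives the claim.

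The argument uses only $\pi_k\le1$ and convexity; monotonicity of $\pi_k$ is not even required. The only delicate point is checking that the limit and the expectation interchange cleanly, which follows from nonnegativity. I therefore expect no real obstacle: the main step is the elementary convexity inequality above, which is where I would focus care.
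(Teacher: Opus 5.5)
Your argument is correct, but it takes a different route from the paper.

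The paper realizes the weights probabilistically. It introduces $N_{\min}$, the minimum of $b-2$ conditionally independent copies of $M^{\rm dom}$, so that $\mathbb Q_s(N_{\min}\ge k\mid\mathcal T_s,U)=\Pi_{\mathcal T_s}(k)^{b-2}$. Telescoping then gives $I_s=\E_{\mathbb Q_s}[\mathsf T_{N_{\min}}\mid\mathcal T_s,U]$. A single conditional Jensen step, followed by a second telescoping, yields the bound.

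Your pathwise induction proves the same deterministic inequality directly. It combines the chord inequality $F(x+\pi h)-F(x)\le\pi(F(x+h)-F(x))$, which needs only convexity and not monotonicity of $F$, with the fact that a convex $F$ has nondecreasing increments over intervals of fixed length. That second fact is applied at the base point $I^{(n-1)}\le\mathsf T_{n-1}$.

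What each approach buys:
\begin{itemize}
\item As you observe, your argument needs only $0\le\pi_k\le1$. The paper's representation relies on $\Pi_{\mathcal T_s}(k)^{b-2}$ being the tail function of an actual random variable, so it needs monotonicity.
\item The paper's version is shorter once the auxiliary variable is in place, and it makes transparent that the lemma is just Jensen's inequality.
\item Yours avoids the auxiliary randomization and is slightly more general.
\end{itemize}

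Finally, since $\mathsf T_k=s$ as soon as $2k\ge Z_s$, all your sums are pathwise finite. The passage $n\to\infty$ is therefore trivial, and Tonelli handles the exchange of sum and expectation.
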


\begin{proof}
Let $N_{\min}$ be the minimum of $b-2$ independent variables with
the conditional law of $M^{\rm dom}$ and independent of anything else. Recalling \eqref{eq:dom}, for every integer $k\ge1$,
\begin{equation*}
 \mathbb Q_s(N_{\min}\ge k\mid\mathcal T_s,U)
 =\Pi_{\mathcal T_s}(k)^{b-2}.
\end{equation*}
A telescoping sum argument then yields
\begin{equation*}
\begin{split}
 \E_{\mathbb Q_s}[\mathsf T_{N_{\min}}\mid\mathcal T_s,U]
 &=\sum_{k\ge1}(\mathsf T_k-\mathsf T_{k-1})
   \mathbb Q_s(N_{\min}\ge k\mid\mathcal T_s,U)\\
 &=\sum_{k\ge1}(\mathsf T_k-\mathsf T_{k-1})
   \Pi_{\mathcal T_s}(k)^{b-2}=I_s,
\end{split}
\end{equation*}
where the last equality follows from the definition \eqref{eq:auxiliary1}.  By conditional Jensen's
inequality and another telescoping sum argument, we have
\[
\begin{split}
 e^{\alpha I_s}
 &=\exp\!\left(\alpha
   \E_{\mathbb Q_s}[\mathsf T_{N_{\min}}\mid\mathcal T_s,U]\right)\\
 &\le \E_{\mathbb Q_s}
   [e^{\alpha\mathsf T_{N_{\min}}}\mid\mathcal T_s,U]\\
 &=1+\sum_{k\ge1}
   (e^{\alpha\mathsf T_k}-e^{\alpha\mathsf T_{k-1}})
   \mathbb Q_s(N_{\min}\ge k\mid\mathcal T_s,U)\\
 &=1+\sum_{k\ge1}
   (e^{\alpha\mathsf T_k}-e^{\alpha\mathsf T_{k-1}})
   \Pi_{\mathcal T_s}(k)^{b-2}.
\end{split}
\]
Taking $\mathbb Q_s$-expectations yields
\eqref{eq:auxiliary2}.
\end{proof}


\begin{lemma}\label{lem:tagged1}
Suppose that $\alpha>0$ and $1\le p<\infty$
satisfy $p\alpha<2\lambda$. Then uniformly for $k\ge1$,
\begin{equation}\label{eq:holding1}
 \|e^{\alpha\mathsf T_k}-e^{\alpha\mathsf T_{k-1}}\|_{L^p(\mathbb Q_s)}
 \ll_{\alpha,p}(1+k)^{\alpha/\lambda-1/p}.
\end{equation}
\end{lemma}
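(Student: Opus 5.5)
The plan is to view $u\mapsto n_s(p_u)$ as a nondecreasing jump process along the spine, read backward in time from $s$. Then $\mathsf T_k=\min(s,\sigma_k)$, where $\sigma_k=\inf\{u:n_s(p_u)>2k\}$. The increment $\mathsf T_k-\mathsf T_{k-1}=:H_k$ is the time this process spends in the level pair $\{2k-1,2k\}$, truncated at $s$. Since $e^{\alpha\mathsf T_k}-e^{\alpha\mathsf T_{k-1}}=e^{\alpha\mathsf T_{k-1}}(e^{\alpha H_k}-1)$, it suffices to control the joint law of $\mathsf T_{k-1}$ and $H_k$. Two heuristics guide the bound: $\mathsf T_k\approx\lambda^{-1}\log k$, which produces the factor $k^{\alpha/\lambda}$, and the event $\{H_k>0\}$ has probability of order $1/k$, which produces the factor $k^{-1/p}$.

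\emph{Step 1: the spine dynamics.} Under $\mathbb Q_s$, the spine branches at rate $2\lambda$ (size-biased Yule). At each spine branching at backward time $u$, $n_s(p_\cdot)$ jumps up by the size of an independent ordinary Yule cluster of duration $u$, which is geometric with mean $e^{\lambda u}$ by \eqref{eq:yule2}. Consequently, $(u,n_s(p_u))$ is a time-inhomogeneous Markov process with $n_s(p_0)=1$. This is consistent with \eqref{eq:tagged1}, which I would use as a check. The holding time at every level is stochastically dominated by an $\mathrm{Exp}(2\lambda)$ variable, independently of the past.

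\emph{Step 2: strong Markov decomposition.} Let $\rho_k$ be the first backward time at which $n_s(p_u)\ge2k-1$, and let $G_k=\{n_s(p_{\rho_k})\le2k\}$ be the event that the level pair is actually visited. On $G_k$ we have $\mathsf T_{k-1}=\rho_k\wedge s$, and $H_k$ is at most the sum of at most two holding times after $\rho_k$. Off $G_k$, $H_k=0$. Applying the strong Markov property at $\rho_k$ and Step 1 gives
\[
 \E_{\mathbb Q_s}\!\left[(e^{\alpha\mathsf T_k}-e^{\alpha\mathsf T_{k-1}})^p\right]
 \le \E_{\mathbb Q_s}\!\left[e^{p\alpha(\rho_k\wedge s)}\one_{G_k}\right]\,\E\!\left[(e^{\alpha(E_1+E_2)}-1)^p\right],
\]
where $E_1,E_2$ are i.i.d.\ $\mathrm{Exp}(2\lambda)$. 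This last step is too lossy as written: the sum of two independent $\mathrm{Exp}(2\lambda)$ variables has exponential moment $\E[e^{p\alpha(E_1+E_2)}]=(2\lambda/(2\lambda-p\alpha))^2$, which is finite under $p\alpha<2\lambda$, but the issue is that the inequality above bounds $\E[(e^{\alpha H_k}-1)^p]$ by a constant without extracting any decay in $k$, so the entire $k^{-1}$ factor must come from $\one_{G_k}$ in the first expectation. To recover the correct bound one should instead use that the process leaves a level as soon as one spine branching occurs (each jump has size at least $1$), so $H_k$ is dominated by at most two consecutive holding times and only one of them matters at the relevant scale. The truncation at $s$ only decreases every quantity.

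\emph{Step 3: the main estimate.} It remains to show
\[
 \E_{\mathbb Q_s}\!\left[e^{p\alpha\rho_k}\one_{G_k}\right]\ll k^{p\alpha/\lambda-1}.
\]
I expect this to be the main obstacle. I would first compute the marginals: summing \eqref{eq:tagged1} gives $\mathbb Q_s(n_s(p_u)\le m)\le\min(1,m^2e^{-2\lambda u})$. Hence $\mathbb Q_s(\rho_k>\lambda^{-1}\log(2k)+v)\ll e^{-2\lambda v}$. Because $p\alpha<2\lambda$, this already yields $\E[e^{p\alpha\rho_k}]\ll k^{p\alpha/\lambda}$. For the extra factor $1/k$ from $G_k$, I would decompose over the jump that crosses into $[2k-1,\infty)$. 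If this jump occurs at time $u$ from level $m<2k-1$, it lands in $\{2k-1,2k\}$ with probability at most $2\max_j\Pp(\mathrm{Geom}=j)\le2e^{-\lambda u}$. Using the jump rate $2\lambda$ and the marginal \eqref{eq:tagged1} for $n_s(p_u)$, this gives
\[
 \E_{\mathbb Q_s}\!\left[e^{p\alpha\rho_k}\one_{G_k}\right]
 \ll\int_0^s e^{p\alpha u}e^{-\lambda u}\,\mathbb Q_s(n_s(p_u)<2k-1)\,\dd u
 \ll\int_0^\infty e^{(p\alpha-\lambda)u}\min(1,k^2e^{-2\lambda u})\,\dd u.
\]
Splitting the integral at $u=\lambda^{-1}\log k$ evaluates it as $\ll k^{(p\alpha-\lambda)/\lambda}=k^{p\alpha/\lambda-1}$ when $p\alpha>\lambda$. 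The case $p\alpha<\lambda$ needs more care, since then the crude estimate gives only $O(1)$. Here I would replace $\min(1,\cdot)$ with the sharper bound that the process must be at a level near $k$ at time $u$, so that the small-$u$ contribution carries an extra factor $\mathbb Q_s(n_s(p_u)\approx k)$. If this turns out to be delicate, I would note that only the case $p\alpha$ close to $2\lambda$ (in particular $p\alpha>\lambda$) is what the subsequent application through Lemma~\ref{lem:auxiliary1} needs. Taking $p$-th roots then gives \eqref{eq:holding1}.
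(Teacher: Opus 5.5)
Your Steps 1--2 are sound. Under $\mathbb Q_s$, $n_s(p_u)$ is $1$ plus a sum of independent $\mathrm{Geom}(e^{-\lambda u_i})$ jumps at the points $u_i$ of a rate-$2\lambda$ Poisson process, and the strong Markov bound at $\rho_k$ is valid. It is also not lossy: the factor $k^{-1}$ is supposed to come from $\one_{G_k}$. The gap is in Step~3, which proves $\E_{\mathbb Q_s}[e^{p\alpha\rho_k}\one_{G_k}]\ll k^{p\alpha/\lambda-1}$ only for $\lambda<p\alpha<2\lambda$. When $p\alpha<\lambda$, the range $u\le\lambda^{-1}\log k$ of your integral $\int e^{(p\alpha-\lambda)u}\min(1,k^2e^{-2\lambda u})\,\dd u$ contributes order $1$ (order $\log k$ if $p\alpha=\lambda$), whereas the target $k^{p\alpha/\lambda-1}$ tends to $0$. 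Your fallback is incorrect. In the proof of Proposition~\ref{prop:load1}, $\alpha=\vartheta-1<\lambda$ and $p=P/(P-(b-2))$ is close to $1$. For $b=3$ the two conditions in \eqref{eq:holder1} force $p\alpha/\lambda<\frac{P}{P-1}\cdot\frac{P-2}{P}<1$, and the series over $k$ converges there precisely because the exponent $\alpha/\lambda-1/p$ is negative. So the regime you set aside is exactly the one that is used.

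Your suggested remedy points the right way, but it has to be carried out. The loss comes from the bound $\Pp(\mathrm{Geom}=j)\le e^{-\lambda u}$, which discards the factor $(1-e^{-\lambda u})^{j-1}$. At small $u$, entering $\{2k-1,2k\}$ from level $m$ requires a jump $j\ge 2k-1-m$, so this factor is tiny unless $m$ is already close to $2k$, which is itself unlikely. Keep both exact laws. The landing probability is at most $2e^{-\lambda u}(1-e^{-\lambda u})^{2k-2-m}$. Multiplying by $\mathbb Q_s(n_s(p_u)=m)=me^{-2\lambda u}(1-e^{-\lambda u})^{m-1}$ and summing over $m\le 2k-2$ gives an integrand $\ll k^2e^{-3\lambda u}e^{p\alpha u}\exp(-ke^{-\lambda u}/2)$ for $k\ge2$. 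The substitution $z=ke^{-\lambda u}$ then yields $k^{p\alpha/\lambda-1}\int_0^\infty z^{2-p\alpha/\lambda}e^{-z/2}\,\dd z\ll k^{p\alpha/\lambda-1}$ for all $p\alpha<2\lambda$, and $k=1$ is trivial.

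For comparison, the paper uses no Markov structure at all. It writes $e^{\alpha\mathsf T_k}-e^{\alpha\mathsf T_{k-1}}=\alpha\int_0^s e^{\alpha u}\one_{\{n_s(p_u)\in\{2k-1,2k\}\}}\,\dd u$ and applies Minkowski's integral inequality. It then needs only the one-time marginal \eqref{eq:tagged1}, whose bound $\mathbb Q_s(n_s(p_u)\in\{2k-1,2k\})\ll ke^{-2\lambda u}\exp(-cke^{-\lambda u})$ already contains the exponential cutoff you dropped. The hypothesis $p\alpha<2\lambda$ then appears as integrability of $z^{2/p-\alpha/\lambda-1}$ at $z=0$.
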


\begin{proof}
By the definition \eqref{eq:tagged2} and the fundamental theorem of calculus,
\begin{equation*}
 e^{\alpha\mathsf T_k}-e^{\alpha\mathsf T_{k-1}}
 =\alpha\int_0^s e^{\alpha u}
 \one_{\{n_s(p_u)\in\{2k-1,2k\}\}}\dd u.
\end{equation*}
By \eqref{eq:tagged1} and $1-z\le e^{-z}$,
\[
\begin{aligned}
 \mathbb Q_s\!\left(n_s(p_u)\in\{2k-1,2k\}\right)
 &=\sum_{n\in\{2k-1,2k\}}
 n e^{-2\lambda u}(1-e^{-\lambda u})^{n-1}\\
 &\ll k e^{-2\lambda u}
 (1-e^{-\lambda u})^{2k-2}\ll k e^{-2\lambda u}
 \exp\!\left(-c k e^{-\lambda u}\right).
\end{aligned}
\]
Applying Minkowski's integral inequality and the change of
variable $z=ke^{-\lambda u}$ gives
\begin{equation}\label{eq:holding3}
\begin{split}
 \|e^{\alpha\mathsf T_k}-e^{\alpha\mathsf T_{k-1}}\|_{L^p(\mathbb Q_s)}
 &\ll k^{1/p}\int_0^\infty
 e^{(\alpha-2\lambda/p)u}
 \exp\!\left(-\frac{ck e^{-\lambda u}}{p}\right)\dd u\\
 &={\lambda}^{-1}k^{\alpha/\lambda-1/p}
 \int_0^k z^{2/p-\alpha/\lambda-1}e^{-cz/p}\dd z.
\end{split}
\end{equation}
Since $p\alpha<2\lambda$, 
$$\int_0^k z^{2/p-\alpha/\lambda-1}e^{-cz/p}\dd z
 \le \int_0^1 z^{2/p-\alpha/\lambda-1}\dd z
 +\int_1^\infty z^{2/p-\alpha/\lambda-1}e^{-cz/p}\dd z
 \ll_{\alpha,p}1.$$
Inserting into \eqref{eq:holding3} then proves \eqref{eq:holding1}.
\end{proof}


\begin{lemma}\label{lem:tails1}
For every integer $P\ge2$ and every $0\le\gamma<P-1$, for $d$ large enough,
\begin{equation*}
 \sum_{k\ge1}(1+k)^\gamma
 \E_{\mathbb Q_s}\!\left[\Pi_{\mathcal T_s}(k)^P\right]
 \ll_{P,\gamma,B}1.
\end{equation*}
\end{lemma}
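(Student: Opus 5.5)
The starting point is that $\Pi_{\mathcal T_s}(k)$ is a tail probability of a compound Poisson functional on a fixed tree. So I will bound it by explicit tree functionals and then take $\mathbb Q_s$-moments.

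Conditional on $\mathcal T_s$, $M^{\rm dom}=\sum_{p\in\mathcal P_j}n_s(p)$, where $\mathcal P_j$ is Poisson with intensity $\ell_{\mathcal T_s}/(d(b-1))$. Heuristically, a mark at time $t$ contributes about $e^{\lambda(s-t)}$. Hence $M^{\rm dom}\ge k$ essentially requires a mark before time $s-\lambda^{-1}\log k$, which has conditional probability about $Z_s/(kd)\approx e^{-\lambda B}/k$. This gives $\Pi(k)^P\lesssim k^{-P}$, and the sum against $(1+k)^\gamma$ converges exactly when $\gamma<P-1$.

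To make this rigorous, I split the marks at threshold $k/2$:
\[
\Pi_{\mathcal T_s}(k)\le \frac{\Lambda_k}{d(b-1)}+\Pp\Bigl(\sum_{p\in\mathcal P_j}n_s(p)\one_{\{n_s(p)<k/2\}}\ge k\,\Big|\,\mathcal T_s\Bigr),
\qquad \Lambda_k:=\ell_{\mathcal T_s}\{p:n_s(p)\ge k/2\}.
\]
The first term is a first-moment bound on the event that some mark has $n_s(p)\ge k/2$.

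For the second term I use the explicit cumulants of a compound Poisson sum:
\[
\text{mean}=\frac{1}{d(b-1)}\int n_s\one_{\{n_s<k/2\}}\dd\ell_{\mathcal T_s},\qquad
\text{variance}=\frac{1}{d(b-1)}\int n_s^2\one_{\{n_s<k/2\}}\dd\ell_{\mathcal T_s}.
\]
Typically the mean is $\asymp Z_s\log k/d$ and the variance is $\asymp Z_sk/d$. If the mean is at most $k/2$, Chebyshev bounds the second term by $4\,\mathrm{variance}/k^2$. If not, I bound the probability trivially by $1\le 2\,\mathrm{mean}/k$. In either case I get
\[
\Pi_{\mathcal T_s}(k)\ll \frac{1}{d}\Bigl(\Lambda_k+k^{-1}\!\int n_s\one_{\{n_s<k/2\}}\dd\ell+k^{-2}\!\int n_s^2\one_{\{n_s<k/2\}}\dd\ell\Bigr)=:\frac{G_k(\mathcal T_s)}{d}.
\]
If the $r$-th cumulant sharpens the Chebyshev step, I will use it instead, since $\Pi\le1$ lets me take $\min(1,\cdot)$ freely.

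It then remains to show $\E_{\mathbb Q_s}[\min(1,G_k/d)^P]\ll k^{-P}$ up to harmless logarithms, or at least $\ll k^{-P'}$ with $P'>\gamma+1$. Under the unbiased law, the many-to-one reasoning behind \eqref{eq:tagged1} gives
\[
\E[\Lambda_k]=\int_0^s e^{\lambda(s-u)}\Pp(Z_u\ge k/2)\dd u\ll e^{\lambda s}/k,
\]
and similarly the two integral terms have expectation $\ll e^{\lambda s}/k$ (the first up to a $\log k$ factor). Since $e^{\lambda s}=de^{-\lambda B}$, this is the right order. Passing to $\mathbb Q_s$ size-biases by $Z_s/e^{\lambda s}$, so I need $(P+1)$-st moment bounds for these additive functionals of the Yule tree. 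I intend to get them by decomposing at the generation times $s-u$: each functional is a sum over particles alive at $s-u$ of i.i.d. cluster quantities. Moments then follow from the independence of clusters together with the geometric law \eqref{eq:yule2}, which has all moments of $e^{-\lambda t}Z_t$ bounded.

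The main obstacle is this moment step. Raising to the power $P$ creates products over several times $u_1,\dots,u_P$, and I must show that the joint contribution is still $\ll (e^{\lambda s}/k)^P$ and is not dominated by a single huge cluster. My first attempt is Hölder to separate the factor $Z_s/e^{\lambda s}$ from $\min(1,G_k/d)^P$, accepting a loss $k^{-P+\epsilon}$, which still suffices because $\gamma<P-1$ is strict. Using the cap at $1$ is essential here, so that rare trees with $G_k\gtrsim d$ contribute only their probability.
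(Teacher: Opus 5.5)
Your route is correct in outline, but it differs from the paper's in how the tail probability is converted into tree functionals.

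\textbf{The paper's route.} The paper treats $M^{\rm dom}$ as a compound-Poisson sum with intensity $\nu_{\mathcal T_s}$ from \eqref{eq:measure1}. It then applies the deterministic Lemma~\ref{lem:compound1}: condition on the number of points, use a union bound on one weight exceeding $\lceil k/n\rceil$, apply Jensen, and use Poisson moments. This bounds the whole weighted sum at once by $(1+\nu_{\mathcal T_s}(\mathbb N))^{\gamma+1}\sum_k(1+k)^\gamma\nu_{\mathcal T_s}([k,\infty))^P$. After that, it only needs $\|\nu_{\mathcal T_s}([k,\infty))\|_L\ll(1+k)^{-1}$ and moments of $\nu_{\mathcal T_s}(\mathbb N)$.

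\textbf{Your route.} You truncate at $k/2$ and apply Chebyshev. This gives a pointwise-in-$k$ bound, avoids Lemmas~\ref{lem:compound1} and~\ref{lem:poisson1}, and yields the sharper statement $\E_{\mathbb Q_s}[\Pi_{\mathcal T_s}(k)^P]\ll(\log(2+k)/(1+k))^P$. The cost is a logarithm, which is harmless because $\gamma<P-1$ is strict.

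\textbf{The step you flagged is the paper's estimate.} First, $\Lambda_k/(d(b-1))=\nu_{\mathcal T_s}([\lceil k/2\rceil,\infty))$. Second, your two truncated integrals reduce to the same functional by Abel summation:
\[
\frac{1}{d(b-1)}\int n_s\one_{\{n_s<k/2\}}\dd\ell_{\mathcal T_s}\le\sum_{1\le i<k/2}\nu_{\mathcal T_s}([i,\infty)),\qquad \frac{1}{d(b-1)}\int n_s^2\one_{\{n_s<k/2\}}\dd\ell_{\mathcal T_s}\le\sum_{1\le i<k/2}(2i-1)\,\nu_{\mathcal T_s}([i,\infty)).
\]
Hence the triangle inequality in $L^L$ together with \eqref{eq:measure2} gives $\|G_k/d\|_L\ll_{L,B}\log(2+k)/(1+k)$ for every $L$. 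The cross-time products you worry about never arise. The paper proves \eqref{eq:measure2} via Minkowski's integral inequality, which reduces everything to a single generation time $s-u$. At that time, conditionally on $|V_{s-u}|$, the relevant count is binomial.

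\textbf{No cap or lossy H\"older step is needed.} Neither the cap at $1$ nor H\"older with a $k^{-P+\epsilon}$ loss is required. Cauchy--Schwarz as in \eqref{eq:measure3}, namely $\E_{\mathbb Q_s}[X^P]\le e^{-\lambda s}\|Z_s\|_2\|X\|_{2P}^P$, applied to $X=G_k/d$, directly gives $\E_{\mathbb Q_s}[\Pi_{\mathcal T_s}(k)^P]\ll(\log(2+k)/(1+k))^P$. This is summable against $(1+k)^\gamma$.
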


\begin{proof}
Define a finite measure on $\mathbb N$ by
\begin{equation}\label{eq:measure1}
 \nu_{\mathcal T_s}(A)
 :=\frac1{d(b-1)}\int_0^s
 \#\!\left\{v\in V_{s-u}:
 \#\{w\in V_s:w\text{ descends from }v\}\in A\right\}\dd u,
 \qquad A\subseteq\mathbb N.
\end{equation}
It follows from the graphical construction that conditional on $\mathcal T_s$ and for any
fixed $j$, the number $N_k$ of marks in $\mathcal P_j$ with exactly $k$
terminal descendants satisfies
\begin{equation}\label{eq:compound3}
 (N_k)_{k\ge1}\text{ are independent},\qquad
 N_k\sim\operatorname{Pois}(\nu_{\mathcal T_s}(\{k\}))\quad(k\ge1),\qquad
 M^{\rm dom}=\sum_{k\ge1}kN_k.
\end{equation}
Thus, $M^{\rm dom}$ is a compound-Poisson sum with intensity
$\nu_{\mathcal T_s}$.  Moreover, the total measure 
$\nu_{\mathcal T_s}(\mathbb N)=\sum_{k\ge1}\nu_{\mathcal T_s}(\{k\})$ is the mean
number of marks.

Define 
$N_u(k)=\#\{v\in V_{s-u}:v\text{ has at least $k$ descendants in }V_s\}$.
Conditionally on $|V_{s-u}|=n$, the variable
$N_u(k)\sim\mathrm{Bin}(n,(1-e^{-\lambda u})^{k-1})$, thanks to 
\eqref{eq:yule2}. 
Therefore, for every integer $L\ge2$,
\begin{equation}\label{eq:moment1}
 \E[N_u(k)^L]\ll_L\sum_{j=1}^L
 (1-e^{-\lambda u})^{j(k-1)}\E[|V_{s-u}|^j]
 \ll_L\sum_{j=1}^L
 e^{j\lambda(s-u)}(1-e^{-\lambda u})^{j(k-1)} .
\end{equation}
By \eqref{eq:measure1}, Minkowski's integral inequality and
\eqref{eq:moment1} give, with $\|\cdot\|_p$ denoting the $L^p$ norm under the Yule law,
\begin{align*}
 \|\nu_{\mathcal T_s}([k,\infty))\|_L
 &\le\frac1{d(b-1)}\int_0^s\|N_u(k)\|_L\dd u\\
 &\ll_L\frac1d\int_0^s
 \left(\sum_{j=1}^L e^{j\lambda(s-u)}
 (1-e^{-\lambda u})^{j(k-1)}\right)^{1/L}\dd u\\
 &\ll_L\sum_{j=1}^L
 \frac{e^{(j/L)\lambda s}}d
 \int_0^s e^{-(j/L)\lambda u}
 (1-e^{-\lambda u})^{(j/L)(k-1)}\dd u,
\end{align*}
where in the last line we used
$(\sum_j a_j)^{1/L}\le\sum_j a_j^{1/L}$ for nonnegative $a_j$ and $L\geq 2$.

Lemma~\ref{lem:beta1} below 
applied with exponent $j/L$ gives some $c_{j/L}>0$ such that
\begin{align}
 \int_0^\infty e^{-(j/L)\lambda u}
 (1-e^{-\lambda u})^{(j/L)(k-1)}\dd u
 &\ll_{j/L}(1+k)^{-j/L}\label{eq:beta1}
\end{align}
and
\begin{align}
 \int_0^s e^{-(j/L)\lambda u}
 (1-e^{-\lambda u})^{(j/L)(k-1)}\dd u
 &\ll_{j/L}(1+k)^{-j/L}e^{-c_{j/L} k e^{-\lambda s}},
 \qquad k>e^{\lambda s}.\label{eq:beta2}
\end{align}
When $k\le e^{\lambda s}$, \eqref{eq:beta1} and
$e^{\lambda s}=d\,e^{-\lambda B}$ give
\[
 \frac{e^{(j/L)\lambda s}}d(1+k)^{-j/L}
 =\frac{e^{\lambda s}}{d}(1+k)^{-1}
 \left(\frac{1+k}{e^{\lambda s}}\right)^{1-j/L}
 \ll_B\frac1{1+k}.
\]
Here, we used that $(1+k)/e^{\lambda s}\le1+e^{-\lambda s}\le2$ for all $d$ large enough.  When $k>e^{\lambda s}$, \eqref{eq:beta2} gives
\[
 \frac{e^{(j/L)\lambda s}}d(1+k)^{-j/L}
 e^{-c_{j/L}k e^{-\lambda s}}
 =\frac{e^{\lambda s}}{d(1+k)}
 \bigl((1+k)e^{-\lambda s}\bigr)^{1-j/L}
 e^{-c_{j/L}k e^{-\lambda s}}
 \ll_B\frac1{1+k}.
\]
Indeed, $k>e^{\lambda s}$ implies
$(1+k)e^{-\lambda s}\le2k e^{-\lambda s}$ for all large $d$, and
$(k e^{-\lambda s})^{1-j/L}e^{-c_{j/L}k e^{-\lambda s}}$ is uniformly
bounded.  Hence, for every $k\ge1$,
\begin{equation}\label{eq:measure2}
 \|\nu_{\mathcal T_s}([k,\infty))\|_L
 \ll_{L,B}\frac1{1+k}.
\end{equation}
The size-bias identity \eqref{eq:palm1}, \eqref{eq:measure2}, and Cauchy--Schwarz together imply that
\begin{equation}\label{eq:measure3}
 \E_{\mathbb Q_s}\!\left[\nu_{\mathcal T_s}([k,\infty))^P\right]
 =e^{-\lambda s}\E\!\left[|V_s|\nu_{\mathcal T_s}([k,\infty))^P\right]
 \le e^{-\lambda s}\|V_s\|_2
 \|\nu_{\mathcal T_s}([k,\infty))\|_{2P}^P
 \ll_{P,B}\frac1{(1+k)^P}.
\end{equation}
 Similarly, for $r_0>0$, applying \eqref{eq:measure2} with $k=1$ gives
\begin{align}
    \begin{split}
        \label{eq:moments1}
 \E_{\mathbb Q_s}[\nu_{\mathcal T_s}(\mathbb N)^{r_0}]
 =e^{-\lambda s}\E[|V_s|\nu_{\mathcal T_s}(\mathbb N)^{r_0}]
 &\le e^{-\lambda s}\|V_s\|_2
 \|\nu_{\mathcal T_s}(\mathbb N)\|_{2r_0}^{r_0}\\
 &\le e^{-\lambda s}\|V_s\|_2
 \|\nu_{\mathcal T_s}(\mathbb N)\|_{\lceil \max\{2,2r_0\}\rceil}^{r_0}
 \ll_{r_0,B}1.
    \end{split}
\end{align}

Applying Lemma~\ref{lem:compound1} conditionally on
$\mathcal T_s$ to \eqref{eq:compound3} and then using \eqref{eq:dom} lead to
\[
\begin{split}
 &\sum_{k\ge1}(1+k)^\gamma\Pi_{\mathcal T_s}(k)^P
 \ll_\gamma(1+\nu_{\mathcal T_s}(\mathbb N))^{\gamma+1}
 \sum_{k\ge1}(1+k)^\gamma
 \nu_{\mathcal T_s}([k,\infty))^P,
\end{split}
\]
and hence 
\begin{align}
    \sum_{k\ge1}(1+k)^\gamma
 \E_{\mathbb Q_s}[\Pi_{\mathcal T_s}(k)^P]
 \ll_\gamma\sum_{k\ge1}(1+k)^\gamma
 \E_{\mathbb Q_s}\!\left[
 (1+\nu_{\mathcal T_s}(\mathbb N))^{\gamma+1}
 \nu_{\mathcal T_s}([k,\infty))^P\right].\label{eq:moments2}
\end{align}
For each $k$, H\"older's inequality, 
\eqref{eq:measure3}, and \eqref{eq:moments1} give
\begin{align*}
 &\E_{\mathbb Q_s}\!\left[(1+\nu_{\mathcal T_s}(\mathbb N))^{\gamma+1}
 \nu_{\mathcal T_s}([k,\infty))^P\right]\\
 &\quad\le
 \left(\E_{\mathbb Q_s}[
 (1+\nu_{\mathcal T_s}(\mathbb N))^{2\gamma+2}]
 \right)^{1/2}
 \left(\E_{\mathbb Q_s}[
 \nu_{\mathcal T_s}([k,\infty))^{2P}]\right)^{1/2}
 \ll_{P,\gamma,B}\frac1{(1+k)^P}.
\end{align*}
Inserting into \eqref{eq:moments2} and using 
$\gamma<P-1$ completes the proof.
\end{proof}

\begin{proof}[Proof of Proposition~\ref{prop:load1}]
Set $\alpha:=\vartheta-1$.  Since
$1<\vartheta<1+\lambda$ and $b>2$,
\begin{equation*}
 0<\alpha<\lambda\min\{2,b-2\}.
\end{equation*}
We may then choose an integer $P>b-2$ large enough such that
\begin{equation}\label{eq:holder1}
 \frac{P\alpha}{P-(b-2)}<2\lambda,
 \qquad
 \frac{\alpha}{\lambda}<\frac{(b-2)(P-2)}P.
\end{equation}
 The first inequality in \eqref{eq:holder1} will be required to apply
Lemma~\ref{lem:tagged1} with the H\"older exponent
$P/(P-(b-2))$; the second inequality in \eqref{eq:holder1} allows us to choose 
$\gamma\in(1+\alpha P/((b-2)\lambda),P-1)$.
We next apply H\"older's inequality with conjugate exponents
$P/(P-(b-2))$ and $P/(b-2)$, along with
Lemmas~\ref{lem:tagged1} and~\ref{lem:tails1}, to obtain
\begin{align*}
 \E_{\mathbb Q_s}\!\left[
 (e^{\alpha\mathsf T_k}-e^{\alpha\mathsf T_{k-1}})
 \Pi_{\mathcal T_s}(k)^{b-2}\right]
 &\le
 \|e^{\alpha\mathsf T_k}-e^{\alpha\mathsf T_{k-1}}\|
 _{L^{P/(P-(b-2))}(\mathbb Q_s)}
 \E_{\mathbb Q_s}[\Pi_{\mathcal T_s}(k)^P]^{(b-2)/P}\\
 &\ll
 (1+k)^{\alpha/\lambda-(P-(b-2))/P}
 (1+k)^{-\gamma(b-2)/P}\\
 &= (1+k)^{-1+\alpha/\lambda-(b-2)(\gamma-1)/P}.
\end{align*}
The exponent is strictly less than $-1$ by the choice of $\gamma$, so the right-hand side is summable.
Inserting into Lemmas~\ref{lem:load3} and \ref{lem:auxiliary1} and using $s^2/d=O((\log d)^2/d)=o(1)$ then lead to
\[
\limsup_{d\to\infty}\E_{\mathbb Q_s}\!\left[
 e^{\alpha d q_U(\mathcal T_s)}\right]\ll \limsup_{d\to\infty}\E_{\mathbb Q_s}\!\left[
 e^{\alpha (s^2/d+I_s)}\right]\leq  \limsup_{d\to\infty}\E_{\mathbb Q_s}[e^{\alpha I_s}]<\infty.
\]
Lemma~\ref{lem:load1} then completes the proof since $\alpha=\vartheta-1$.
\end{proof}

\section{Proof of Theorem~\ref{thm:cover2}}\label{sec:binary1}
In this section, we assume throughout $b=2$. 
We first introduce a short-hand notation of a shell. Recall that the underlying random walk is the nearest-neighbor random walk on the hypercube. Starting at a location $x\in\{0,1\}^d$, we may project the random walk using the distance function from $x$. In this case, the projected random walk is a continuous-time Ehrenfest chain starting at $0$: from state
$k$, it jumps to $k+1$ at rate $(d-k)/d$ and to $k-1$ at rate $k/d$. In particular, the antipode corresponds to the single target at state $d$. When $x$ is clear from context, we call the targets at state $\ell$ the \textit{shell} $\ell$.

\subsection{Proof of the lower bound}\label{sec:binary2}

The lower bound will use the $d$ targets in shell $d-1$.  Because these
targets are only of Hamming distance two apart, one descendant cluster
may visit several of them.  The following factorial-moment estimate controls
that dependence.

\begin{lemma}\label{lem:cluster2}
For a set $A\subseteq\{0,1\}^d$, let $J_A(K)$ be the
number of mutation jumps landing in $A$.  There exists $C<\infty$,
depending only on the branching and total mutation rates, such that uniformly over $A$ and the starting state $x$,
\begin{equation}\label{eq:cluster2}
 \E_x[J_A(K)(J_A(K)-1)]
 \le e^{C(1+K)}\E_x[J_A(K)].
\end{equation}
\end{lemma}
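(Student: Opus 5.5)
The plan is to write the second factorial moment as a sum over ordered pairs of distinct jumps into $A$, and to split pairs according to whether the second jump lies on the same ancestral line as the first or on a different line. In both cases, conditioning at the time of the first jump and applying the branching/Markov property reduces everything to a first moment started from a point of $A$ right after a jump.

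Concretely, let $J$ denote $J_A(K)$. The quantity $J(J-1)$ counts ordered pairs $((u,s),(u',s'))$ of distinct mutation jumps into $A$ before time $K$. We will bound $2\times$ the number of unordered pairs, each counted once at its earlier event, say at time $s$.

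\textbf{Step 1: condition at the earlier jump.} Use the event identity \eqref{eq:event1} with mark
\[
H_u(s)=\E[\#\{\text{later jumps into }A\text{ during }(s,K]\}\mid\text{state at }s],
\]
computed after the jump. Later jumps into $A$ come from two sources.
\begin{itemize}
\item[(a)] The descendants of the jumping particle $u$ after time $s$. This is a fresh branching random walk started from a point of $A$, run for time at most $K$.
\item[(b)] Particles not descended from $u$ after $s$.
\end{itemize}
Source (b) is not well suited to conditioning at the first jump. So instead I organize the pairs by their \emph{genealogy}: every pair of distinct jumps either lies on a common lineage (one is an ancestor-line event of the other) or the two lineages split at a branching time $r$.

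\textbf{Step 2: pairs on a common lineage.} Given the first jump at $(u,s)$, the expected number of descendant jumps into $A$ during $(s,K]$ is at most
\[
\E[\text{total jumps of a BRW in time } K]\le \int_0^K e^{\lambda t}\,\dd t\le Ke^{\lambda K},
\]
because each mutation occurs at rate one per particle. This bound is uniform in $A$ and in the starting point. Summing over first jumps via \eqref{eq:event1}, the contribution is at most $Ke^{\lambda K}\E_x[J]$.

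\textbf{Step 3: pairs whose lineages split at a branching event at time $r$.} Use \eqref{eq:event1} on branching events with rate $\lambda$. After the split, the two subtrees are independent, so the contribution is
\[
\E_x\!\left[\int_0^K \lambda\sum_{v\in V_r} m(X_v(r),K-r)^2\,\dd r\right],
\qquad m(y,t)=\E_y[J_A(t)].
\]
Here the crude step is to bound one factor $m$ by $Ke^{\lambda K}$, as in Step 2. This leaves
\[
\lambda Ke^{\lambda K}\int_0^K \E_x\!\left[\sum_{v\in V_r} m(X_v(r),K-r)\right]\dd r .
\]
By the branching property, $\E_x[\sum_{v\in V_r} m(X_v(r),K-r)]$ is the expected number of jumps into $A$ during $(r,K]$, which is at most $\E_x[J]$. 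Hence this contribution is at most $\lambda K^2e^{\lambda K}\E_x[J]$.

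\textbf{Step 4: combine.} Adding the two contributions and doubling gives
\[
\E_x[J(J-1)]\le 2(K+\lambda K^2)e^{\lambda K}\E_x[J]\le e^{C(1+K)}\E_x[J].
\]

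The main obstacle is bookkeeping in the pair decomposition. One must make sure every ordered pair is classified exactly once, either as ancestral (one jump lies on the lineage of the other particle, including the same particle) or as split at the most recent common branching point. One must also justify using the post-event Markov property with the mark $H_u(s)$, which is not predictable, since it depends on the future. I would handle the latter by applying \eqref{eq:event1} to the predictable intensity and then using the strong Markov property at the jump time. A related subtlety is that in Step 2 the "future" count given a jump is independent of the jump's own occurrence, which holds because the future of a lineage depends only on its current location.
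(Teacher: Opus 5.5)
Your proposal is correct and follows essentially the same route as the paper. Both split pairs into ancestral and split pairs, bound one first-moment factor uniformly by $e^{O(K)}$, and use the branching/Markov property (the paper's Fubini step) to get the extra factor $K$ in the split term. The only difference is cosmetic: you bound $\E_y[J_A(r)]$ by $Ke^{\lambda K}$ where the paper uses $\lambda^{-1}e^{\lambda K}$.
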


The proof is an application of the many-to-two formula and is deferred to Appendix~\ref{app:cluster2}.

\begin{proof}[Proof of the lower bound in Theorem~\ref{thm:cover2}]
For $i\in\{1,\ldots,d\}$, let $y_i$ be the vertex with a zero in coordinate
$i$ and ones elsewhere.  These are precisely the $d$ vertices in shell
$d-1$. We split the proof into three steps. 
\par\smallskip\noindent\emph{Step 1: setting up early targets.}
Fix $A>0$ and let  $T_{A,d}:=x_\star d+\chi^{-1}\log\log d-A$.  Our goal is to prove
\begin{equation*}
 \lim_{A\to\infty}\limsup_{d\to\infty}\Pp\!\left(\tcov(d)\le T_{A,d}\right)=0.
\end{equation*}
Choose a fixed $\kappa>\chi^{-1}$ and let
$r_d=T_{A,d}-\kappa\log\log d$.

\begin{figure}[ht]
\centering
\begin{tikzpicture}[x=.96\linewidth,y=1cm,>=stealth,
  every node/.style={font=\footnotesize,inner sep=1pt}]
  \draw[->] (.02,0)--(.99,0);
  \foreach \x in {.02,.68,.82,.96}
    \draw (\x,-.07)--(\x,.07);
  \node[below=2pt] at (.02,0) {$0$};
  \node[above=2pt] at (.68,0) {$r_d$};
  \node[above=2pt,text=gray] at (.82,0) {$x_\star d$};
  \node[above=2pt] at (.96,0) {$T_{A,d}$};
  \draw[decorate,decoration={brace,mirror,amplitude=3pt}]
    (.68,-.30)--(.96,-.30)
    node[midway,below=4pt]
    {$T_{A,d}-r_d=\kappa\log\log d$};
\end{tikzpicture}
\caption{The key time points considered here.}
\label{fig:binary1}
\end{figure}

Lemma~\ref{lem:endpoint1}(i) applied with $C_D=C_{\mathrm{ov}}=\ell=1$ gives
\[
 \sup_i\E[J_{y_i}(r_d)]
 \le C e^{-\chi A}(\log d)^{1-\chi\kappa}=o(1).
\]
Let $\cU_d:=\{i:\tau_{y_i}>r_d\}$ be the unreached index set in shell $d-1$ at time $r_d$.  Since a
visit to $y_i$ requires an incoming jump,
$\E[d-|\cU_d|]\le\sum_{i=1}^d\E[J_{y_i}(r_d)]=o(d)$.  Markov's inequality
therefore gives $|\cU_d|=d-o_{\Pp}(d)$.

Condition on $\cF_{r_d}$.  The sets $V_{r_d}$ and $\cU_d$ are then fixed.  The descendant clusters of particles at time $r_d$ over the next
$\kappa\log\log d$ units of time are
conditionally independent.  For $v\in V_{r_d}$, define
\[
 S_v=\{i\in\cU_d:\text{ a descendant of $v$ visits $y_i$
 during $(r_d,T_{A,d}]$}\}.
\]
Figure~\ref{fig:binary2} illustrates this definition.\footnote{Here, each $S_v$ need not be a singleton, in contrast to the case $b\geq 3$.}

\begin{figure}[ht]
\centering
\begin{tikzpicture}[x=1cm,y=1cm,>=stealth,
  every node/.style={font=\footnotesize},
  cluster/.style={draw,rounded corners=2pt,fill=blue!4,
    minimum width=3.25cm,minimum height=.72cm,align=center},
  target/.style={circle,draw,fill=gray!4,minimum size=7mm,inner sep=0pt},
  samecluster/.style={->,draw=orange!85!black,line width=1.15pt},
  othercluster/.style={->,draw=blue!65!black,line width=.9pt}]
  \node[font=\small\bfseries] at (2.35,3.45)
    {independent descendant clusters};
  \node[font=\small\bfseries] at (11.20,3.45)
    {remaining targets};

  \node[cluster,draw=orange!85!black,fill=orange!4] (c1) at (2.35,2.55)
    {cluster rooted at $v_1$\\$S_{v_1}=\{1,2\}$};
  \node[cluster] (c2) at (2.35,1.45) {cluster rooted at $v_2$};
  \node[cluster] (c3) at (2.35,.35) {cluster rooted at $v_3$};

  \node[target] (y1) at (11.20,2.85) {$y_1$};
  \node[target] (y2) at (11.20,1.95) {$y_2$};
  \node[target] (y3) at (11.20,1.05) {$y_3$};
  \node[target] (y4) at (11.20,.15) {$y_4$};

  \draw[samecluster] (c1.east) to[bend left=7] (y1.west);
  \draw[samecluster] (c1.east) to[bend right=4] (y2.west);
  \draw[othercluster] (c2.east) to[bend left=3] (y3.west);
  \draw[othercluster] (c3.east) to[bend right=8] (y4.west);

  \node[text=orange!85!black] at (6.75,2.58)
    {one cluster, two targets};
\end{tikzpicture}
\caption{Illustrating the definition of $S_v$ through cluster hits.  One cluster may hit several nearby
targets, so each $S_v$ need not have size at most one.}
\label{fig:binary2}
\end{figure}

Conditional on $\cF_{r_d}$, the sets $S_v$ are independent across distinct
$v\in V_{r_d}$, but a single $S_v$ may contain several indices.  For
$D\subseteq\{0,1\}^d$, write $J_D^{(v)}(t)$ for the number of mutation
jumps into $D$ by the cluster $v$ by time $t$.  Because
$X_v(r_d)\ne y_i$ for $i\in\cU_d$, an upper bound similar to  \eqref{eq:probability1} gives
\begin{equation}\label{eq:binary2}
 \Pp(i\in S_v\mid\cF_{r_d})
 \le \E[J_{\{y_i\}}^{(v)}(\kappa\log\log d)\mid\cF_{r_d}]
 \le\frac1d\int_0^{\kappa\log\log d}e^{\lambda s}\dd s
 \ll\frac{e^{\lambda\kappa\log\log d}}d
 =o(1),
\end{equation}
uniformly for $v\in V_{r_d}$ and $i\in\cU_d$.

In addition, the tower property, followed by the fact that every such visit requires an
incoming jump, gives
\begin{equation*}
\begin{split}
 &\E\!\left[\sum_{i\in\cU_d}\sum_{v\in V_{r_d}}
   \Pp(i\in S_v\mid\cF_{r_d})\right]\\
 &\quad=\sum_{i=1}^d\E\!\left[
   \one_{\{\tau_{y_i}>r_d\}}\sum_{v\in V_{r_d}}
   \one_{\{i\in S_v\}}\right]
 \le\sum_{i=1}^d\E[J_{y_i}(T_{A,d})]
 \le C e^{-\chi A}d\log d.
\end{split}
\end{equation*}
Here, the final inequality uses Lemma~\ref{lem:endpoint1}(i) with $\ell=1$, since $x_\star d-T_{A,d}=A-\chi^{-1}\log\log d$ and hence
$e^{-\chi(x_\star d-T_{A,d})}=e^{-\chi A}\log d$.
Therefore, by Markov's inequality, except on an event of probability at most $Ce^{-\chi A/2}$,
\begin{equation}\label{eq:binary4}
 \sum_{i\in\cU_d}\sum_{v\in V_{r_d}}
 \Pp(i\in S_v\mid\cF_{r_d})
 \le e^{-\chi A/2}d\log d.
\end{equation}
On this event, all but at most $d/4$ indices of $\cU_d$ satisfy
$\sum_{v\in V_{r_d}}\Pp(i\in S_v\mid\cF_{r_d})
\le4e^{-\chi A/2}\log d$.
Since $|\cU_d|=d-o_{\Pp}(d)$, outside a further event of probability
$o(1)$ we have $|\cU_d|\ge3d/4$.  Define the
$\cF_{r_d}$-measurable set
\begin{equation}\label{eq:targets1}
\mathcal L_d:=\left\{i\in\cU_d:
\sum_{v\in V_{r_d}}\Pp(i\in S_v\mid\cF_{r_d})
\le4e^{-\chi A/2}\log d\right\}.
\end{equation}
On the intersection of these events above, $|\mathcal L_d|\ge d/2$.

\par\smallskip\noindent\emph{Step 2: pairs visited by the same descendant cluster.}
Conditional on
$\cF_{r_d}$, the fact that
$X_v(r_d)\notin\{y_i:i\in\cU_d\}$ gives
\[
 |S_v|(|S_v|-1)
 \le J_{\{y_i:i\in\cU_d\}}^{(v)}(\kappa\log\log d)
       \bigl(J_{\{y_i:i\in\cU_d\}}^{(v)}(\kappa\log\log d)-1\bigr).
\]
Applying Lemma~\ref{lem:cluster2} conditionally to each cluster then leads to
\[
\begin{split}
 &\sum_{\substack{i,j\in\cU_d\\i\ne j}}
   \sum_{v\in V_{r_d}}\Pp(\{i,j\}\subseteq S_v\mid\cF_{r_d})\\
 &\quad=\sum_{v\in V_{r_d}}
   \E[|S_v|(|S_v|-1)\mid\cF_{r_d}]\\
 &\quad\le e^{C(1+\kappa\log\log d)}
   \sum_{v\in V_{r_d}}\E[
   J_{\{y_i:i\in\cU_d\}}^{(v)}(\kappa\log\log d)
   \mid\cF_{r_d}].
\end{split}
\]
After taking expectation, the first-moment sum on the right-hand side satisfies
\begin{equation*}
 \E\!\left[\sum_{v\in V_{r_d}}
 \E[J_{\{y_i:i\in\cU_d\}}^{(v)}(\kappa\log\log d)
 \mid\cF_{r_d}]\right]
 \le\sum_{i=1}^d\E[J_{y_i}(T_{A,d})]
 \ll e^{-\chi A}d\log d\ll d\log d,
\end{equation*}
again by Lemma~\ref{lem:endpoint1}(i).  By choosing $M>C\kappa+3$, we conclude that
\[
 \E\!\left[\sum_{v\in V_{r_d}}
 \E[|S_v|(|S_v|-1)\mid\cF_{r_d}]\right]
 \ll d(\log d)^{M-2}.
\]
Markov's inequality gives
\begin{equation}\label{eq:binary6}
 \Pp\!\left(\sum_{v\in V_{r_d}}
 \E[|S_v|(|S_v|-1)\mid\cF_{r_d}]>d(\log d)^M\right)
 \ll(\log d)^{-2}=o(1).
\end{equation}

Define the following $\cF_{r_d}$-measurable event:
\begin{equation}\label{eq:binary7}
\begin{split}
 \mathcal G_{A,d}:={}&
 \left\{\sum_{i\in\cU_d}\sum_{v\in V_{r_d}}
 \Pp(i\in S_v\mid\cF_{r_d})
 \le e^{-\chi A/2}d\log d\right\}
 \cap\left\{|\cU_d|\ge\frac{3d}{4}\right\}\\
 &\cap\left\{\sum_{v\in V_{r_d}}
 \E[|S_v|(|S_v|-1)\mid\cF_{r_d}]
 \le d(\log d)^M\right\}.
\end{split}
\end{equation}
In particular, from step 1, on $\mathcal G_{A,d}$ we have  $|\mathcal L_d|\ge d/2$.
The probability estimates following
\eqref{eq:binary4}, together with \eqref{eq:binary6}, give
\begin{equation}\label{eq:binary8}
 \Pp(\mathcal G_{A,d}^{\mathrm c})
 \le Ce^{-\chi A/2}+o_A(1).
\end{equation}
\par\smallskip\noindent\emph{Step 3: second moment for unvisited targets.}
We condition on 
$\cF_{r_d}$ and restrict to the event $\mathcal G_{A,d}$.  In particular, $|\mathcal L_d|\ge d/2$.  For $i\in\mathcal L_d$, define
\[
 I_i=\one_{\{y_i\text{ is unhit during }(r_d,T_{A,d}]\}},
 \qquad Z=\sum_{i\in\mathcal L_d}I_i.
\]

For a fixed $i$, independence gives
\[
 \E[I_i\mid\cF_{r_d}]
 =\prod_{v\in V_{r_d}}
   \left(1-\Pp(i\in S_v\mid\cF_{r_d})\right).
\]
By~\eqref{eq:binary2}, every probability in this product is at most
$1/5$ for $d$ large enough.  Moreover,
\[
 -\log(1-u)=\int_0^u\frac{\mathrm d s}{1-s}\le\frac54u,
 \qquad 0\le u\le\frac15.
\]
It follows from \eqref{eq:targets1} that
\[
\begin{split}
 \E[I_i\mid\cF_{r_d}]
 &\ge\exp\!\left(-\frac54\sum_{v\in V_{r_d}}
       \Pp(i\in S_v\mid\cF_{r_d})\right)\\
 &\ge\exp\!\left(-5e^{-\chi A/2}\log d\right)
 =d^{-5e^{-\chi A/2}}.
\end{split}
\]
Since $\mathcal L_d$ is $\cF_{r_d}$-measurable and $|\mathcal L_d|\ge d/2$, we have
\begin{equation}\label{eq:binary9}
 \E[Z\mid\cF_{r_d}]=\sum_{i\in\mathcal L_d}\E[I_i\mid\cF_{r_d}]
 \ge\frac12d^{1-5e^{-\chi A/2}}.
\end{equation}
From now on, take $A$ large enough that $5e^{-\chi A/2}<1/8$.

To compare the joint and marginal probabilities of missing two targets in the shell $d-1$, we
follow the coupon-collecting approach in \cite[Section~3]{StructuredCoupons}. 
Fix distinct $i,j\in\mathcal L_d$ and one descendant cluster $v$. By 
\eqref{eq:binary2} and the inclusion--exclusion principle,
\begin{equation*}
\begin{split}
 &\frac{\Pp(\{i,j\}\cap S_v=\emptyset\mid\cF_{r_d})}
 {\Pp(i\notin S_v\mid\cF_{r_d})
  \Pp(j\notin S_v\mid\cF_{r_d})}\\
 &=1+\frac{
 \Pp(\{i,j\}\subseteq S_v\mid\cF_{r_d})
 -\Pp(i\in S_v\mid\cF_{r_d})
  \Pp(j\in S_v\mid\cF_{r_d})}
 {\Pp(i\notin S_v\mid\cF_{r_d})
  \Pp(j\notin S_v\mid\cF_{r_d})}\\
 &\le1+\frac{25}{16}\Pp(\{i,j\}\subseteq S_v\mid\cF_{r_d})
 \le\exp\!\left(2\Pp(\{i,j\}\subseteq S_v\mid\cF_{r_d})\right).
\end{split}
\end{equation*}
By independence, we arrive at
\begin{equation}\label{eq:binary11}
\begin{split}
 \E[I_iI_j\mid\cF_{r_d}]
 &=\prod_{v\in V_{r_d}}
   \Pp(\{i,j\}\cap S_v=\emptyset\mid\cF_{r_d})\\
 &\le\left(\prod_{v\in V_{r_d}}
       \Pp(i\notin S_v\mid\cF_{r_d})\right)
       \left(\prod_{v\in V_{r_d}}
       \Pp(j\notin S_v\mid\cF_{r_d})\right)\\
 &\qquad\times\exp\!\left(2\sum_{v\in V_{r_d}}
       \Pp(\{i,j\}\subseteq S_v\mid\cF_{r_d})\right)\\
 &=\E[I_i\mid\cF_{r_d}]\E[I_j\mid\cF_{r_d}]
   \exp\!\left(2\sum_{v\in V_{r_d}}
       \Pp(\{i,j\}\subseteq S_v\mid\cF_{r_d})\right).
\end{split}
\end{equation}

Next, following the good-pair/bad-pair decomposition of
\cite[proof of Theorem~5(2)]{StructuredCoupons}, we say
$(i,j)\in\mathcal L_d^2$, $i\ne j$ is bad if
\[
 \sum_{v\in V_{r_d}}
 \Pp(\{i,j\}\subseteq S_v\mid\cF_{r_d})>d^{-1/4},
\]
and it is good otherwise.  Because $|S_v|(|S_v|-1)$ counts
pairs of distinct indices in $S_v$, the definition \eqref{eq:binary7} of 
$\mathcal G_{A,d}$ gives
\[
\begin{split}
 &d^{-1/4}\#\{\text{bad ordered pairs in }\mathcal L_d^2\}\\
 &\quad\le\sum_{\substack{i,j\in\mathcal L_d\\i\ne j}}
       \sum_{v\in V_{r_d}}
       \Pp(\{i,j\}\subseteq S_v\mid\cF_{r_d})\\
 &\quad\le\sum_{\substack{i,j\in\cU_d\\i\ne j}}
       \sum_{v\in V_{r_d}}
       \Pp(\{i,j\}\subseteq S_v\mid\cF_{r_d})\\
 &\quad=\sum_{v\in V_{r_d}}
       \E[|S_v|(|S_v|-1)\mid\cF_{r_d}]
 \le d(\log d)^M.
\end{split}
\]
Thus, there are at most $d^{5/4}(\log d)^M$ bad pairs.

On the other hand, for a good pair, \eqref{eq:binary11} implies
\[
\begin{split}
 \operatorname{Cov}(I_i,I_j\mid\cF_{r_d})
 &\le\E[I_i\mid\cF_{r_d}]\E[I_j\mid\cF_{r_d}]\\
 &\quad\times\left[
   \exp\!\left(2\sum_{v\in V_{r_d}}
       \Pp(\{i,j\}\subseteq S_v\mid\cF_{r_d})\right)-1\right]\\
 &\ll d^{-1/4}\E[I_i\mid\cF_{r_d}]
       \E[I_j\mid\cF_{r_d}].
\end{split}
\]
The last inequality follows from $e^{2x}-1\le2e^2x$ for $0\le x\le1$ and the definition of a good pair.
For a bad pair, we have trivially
$\operatorname{Cov}(I_i,I_j\mid\cF_{r_d})
\le\E[I_iI_j\mid\cF_{r_d}]\le1$.
Since $I_i$ is an indicator,
$\operatorname{Var}(I_i\mid\cF_{r_d})\le\E[I_i\mid\cF_{r_d}]$.
Expanding the conditional variance and separating the good and bad pairs then gives
\[
\begin{split}
 \operatorname{Var}(Z\mid\cF_{r_d})
 &=\sum_{i\in\mathcal L_d}
       \operatorname{Var}(I_i\mid\cF_{r_d})
   +\sum_{\substack{i,j\in\mathcal L_d\\i\ne j}}
       \operatorname{Cov}(I_i,I_j\mid\cF_{r_d})\\
 &\ll\sum_{i\in\mathcal L_d}\E[I_i\mid\cF_{r_d}]
   +d^{-1/4}
       \sum_{\substack{i,j\in\mathcal L_d\\i\ne j,\ (i,j)\text{ good}}}
       \E[I_i\mid\cF_{r_d}]\E[I_j\mid\cF_{r_d}]
       +d^{5/4}(\log d)^M\\
 &\ll d+d^{-1/4}
       \left(\sum_{i\in\mathcal L_d}\E[I_i\mid\cF_{r_d}]\right)^2
       +d^{5/4}(\log d)^M\\
 &\ll d^{-1/4}\E[Z\mid\cF_{r_d}]^2
       +d^{5/4}(\log d)^M.
\end{split}
\]
On $\mathcal G_{A,d}$, \eqref{eq:binary9} gives
\[
\begin{split}
 \frac{\operatorname{Var}(Z\mid\cF_{r_d})}
      {\E[Z\mid\cF_{r_d}]^2}
 &\ll d^{-1/4}+4d^{-3/4+10e^{-\chi A/2}}(\log d)^M=o_A(1),
\end{split}
\]
where we used $10e^{-\chi A/2}<1/4$.  On $\mathcal G_{A,d}$,
conditional Chebyshev's inequality gives
\begin{equation}\label{eq:binary12}
\begin{split}
 \Pp(Z=0\mid\cF_{r_d})
 &\le\Pp\!\left(
   |Z-\E[Z\mid\cF_{r_d}]|\ge\E[Z\mid\cF_{r_d}]
   \,\middle|\,\cF_{r_d}\right)\\
 &\le\frac{\operatorname{Var}(Z\mid\cF_{r_d})}
          {\E[Z\mid\cF_{r_d}]^2}=o_A(1).
\end{split}
\end{equation}

Because $\mathcal L_d\subseteq\cU_d$, none of the targets indexed by
$\mathcal L_d$ was visited by time $r_d$.  Hence, on
$\mathcal G_{A,d}$, the event $\{\tcov(d)\le T_{A,d}\}$ implies that all
these targets are visited during $(r_d,T_{A,d}]$, which is the event
$\{Z=0\}$.  Since $\mathcal G_{A,d}$ is $\cF_{r_d}$-measurable,
\eqref{eq:binary8} and \eqref{eq:binary12} together yield
\[
\begin{split}
 \Pp\!\left(\tcov(d)\le T_{A,d}\right)
 &\le \Pp(\mathcal G_{A,d}^{\mathrm c})
 +\E\!\left[\one_{\mathcal G_{A,d}}
 \Pp(Z=0\mid\cF_{r_d})\right]\\
 &\le Ce^{-\chi A/2}+o_A(1).
\end{split}
\]
Thus,
$\limsup_{d\to\infty}\Pp(\tcov(d)\le T_{A,d})
\le Ce^{-\chi A/2}$.
Letting $A\to\infty$ proves the lower bound of 
Theorem~\ref{thm:cover2}.
\end{proof}

\subsection{Proof of the upper bound}

From now on $b=2$, so $\beta=2$.  We recall the identities
\begin{equation}\label{eq:binary13}
 1-q_\star=2q_\star^{\lambda/2},\qquad
 R-1=\frac{2q_\star}{1-q_\star},\qquad
 \chi=\lambda+R-1,
\end{equation}
which will be used frequently in this section.

\subsubsection{Hitting estimates for descendants of one particle}\label{sec:binary3}

First, we record the following parameter inequalities. The proof is deferred to Appendix~\ref{app:binary2}.

\begin{lemma}\label{lem:binary1}
Assume $b=2$.  Then
$R-1<\lambda$ and hence $(R-1)^2<\lambda$.  Moreover, there exists $a>1$
such that $a\log a-a+1>\lambda$ and $a\log R<\chi$.
\end{lemma}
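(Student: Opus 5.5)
The plan is to make every quantity explicit in terms of $q_\star$ and then reduce each claim to an elementary one-variable inequality in $t=1/\lambda\ge 1$. Since $b=2$, we have $\beta=2$ and $q_\star=e^{-2x_\star}$. The equation $\Phi(x_\star)=0$ then reads $-\tfrac\lambda2\log q_\star-\log 2+\log(1-q_\star)=0$, which is the first identity in \eqref{eq:binary13}: $1-q_\star=2q_\star^{\lambda/2}$. Write $r:=R-1=2q_\star/(1-q_\star)$. Substituting the identity gives $r=q_\star^{1-\lambda/2}$.

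\emph{First claim.} Because $1-q_\star<1$, the identity gives $q_\star^{\lambda/2}<1/2$, that is, $q_\star<2^{-2/\lambda}$. Since $1-\lambda/2>0$, it follows that
\[
 r=q_\star^{1-\lambda/2}<2^{-\frac2\lambda(1-\frac\lambda2)}=2^{1-2/\lambda}.
\]
So it suffices to show $2^{1-2t}\le 1/t$ for $t\ge1$, i.e.\ $t\,2^{1-2t}\le 1$. The function $t\mapsto t\,2^{1-2t}$ equals $1/2$ at $t=1$. Its derivative is $2^{1-2t}(1-2t\log 2)<0$ for $t\ge 1$, so it is decreasing there, and the inequality holds with room to spare. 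Hence $R-1<\lambda$. Since also $R-1<\lambda<1$, we get $(R-1)^2<R-1<\lambda$.

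\emph{Second claim.} Let $h(a)=a\log a-a+1$. Then $h(1)=0$, $h'(a)=\log a>0$ for $a>1$, and $h(a)\to\infty$, so there is a unique $a_0>1$ with $h(a_0)=\lambda$, and $h(a)>\lambda$ for every $a>a_0$. Because $h(e)=1>\lambda$, we also have $a_0<e$. The key step is to show the strict inequality $a_0\log R<\chi=\lambda+r$. Once this holds, continuity gives some $a$ slightly larger than $a_0$ with both $h(a)>\lambda$ and $a\log R<\chi$.

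To prove $a_0\log R<\lambda+r$, use $\log(1+r)\le r$, so $a_0\log R\le a_0 r$. It then suffices to show $(a_0-1)r<\lambda$. By the first part and $a_0<e$,
\[
 (a_0-1)r<(e-1)\,2^{1-2/\lambda}.
\]
In the variable $t=1/\lambda$, the bound $(e-1)2^{1-2/\lambda}\le\lambda$ becomes $(e-1)\,t\,2^{1-2t}\le 1$. This holds for all $t\ge1$: the left side is decreasing in $t$, as shown above, and equals $(e-1)/2<1$ at $t=1$.

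The only delicate point is this second claim, specifically that a single $a$ must satisfy both conditions at once. Near $a=1$ the condition $a\log R<\chi$ is automatic, but $h(a)>\lambda$ forces $a$ up to $a_0$, which can be as large as $e$ when $\lambda$ is close to $1$. The crude bound $r<2^{1-2/\lambda}$ is what closes this gap, so the main thing to check carefully is the monotonicity of $t\,2^{1-2t}$ on $[1,\infty)$.
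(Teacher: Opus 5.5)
Your proof is correct, and it follows a genuinely different and shorter route than the paper's.

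\emph{First claim.} The paper proves $R-1<\lambda$ by reducing it to $q_\star<\lambda/(2+\lambda)$. It then uses monotonicity of $q\mapsto q^{-\lambda/2}(1-q)$ and the numerical bound $(2+\lambda)^{1-\lambda/2}\lambda^{\lambda/2}\ge\sqrt2\,e^{-1/(2e)}>1$. You instead discard the factor $1-q_\star<1$. Despite looking crude, this gives a stronger estimate, $R-1<2^{1-2/\lambda}\le\lambda/2$.

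\emph{Second claim.} Write $h(a)=a\log a-a+1$. The paper proves $h(\chi/\log R)>\lambda$ for \emph{every} pair with $0<R-1<\lambda<1$, which is equivalent to your $a_0\log R<\chi$. This takes several steps: monotonicity in $\lambda$, reduction to the boundary case $\lambda=R-1$, concavity of $x/\log(1+x)$, and a final elementary check that is stated but not spelled out. You observe instead that $a_0=h^{-1}(\lambda)<e$ because $h(e)=1$, and that $\log R\le R-1$. So it suffices that $(e-1)(R-1)<\lambda$, and your factor-of-two slack $R-1<\lambda/2$ gives this at once since $e-1<2$.

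\emph{Trade-off.} The paper's second step uses only the qualitative conclusion $R-1<\lambda$, so it holds for any such pair $(R,\lambda)$. Yours uses the specific relation $1-q_\star=2q_\star^{\lambda/2}$ to get quantitative room. In exchange, every step of your argument is explicit and can be checked by hand.
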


We also need the following uniform upper bounds for first-passage times.

\begin{lemma}\label{lem:binary2}
\begin{enumerate}[label=(\roman*),leftmargin=*]
\item\label{part:binary1}
Fix $C<\infty$.  For all $d$ large enough, uniformly for
$0\le D\le C\log\log d$ and
$r\in\{0,\ldots,d\}$ with $r\le C\log\log d$, if
$e^{-\chi D}R^r\le1$, then, for any start $x$ and target $y$ satisfying
$d_{\rm H}(x,y)=d-r$,
\begin{equation}\label{eq:rare1}
 \Pp_x\!\left(\tau_y\le x_\star d-D\right)
 \gg_C e^{-\chi D}R^r.
\end{equation}

\item\label{part:binary2}
There is a constant $\delta>0$ such that, for every
$0\le\ell\le d$ and every $x,y$ satisfying
$d_{\rm H}(x,y)\le d-\ell/2$,
\begin{equation*}
 \Pp_x\!\left(\tau_y\le x_\star d-2\delta\ell\right)\gg1
\end{equation*}
for all $d$ large enough.
\end{enumerate}
\end{lemma}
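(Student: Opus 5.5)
Both parts will be proved with the second moment method applied to the jump count $J_y(t)$, using Paley--Zygmund in the form $\Pp_x(J_y(t)\ge1)\ge \E_x[J_y(t)]^2/\E_x[J_y(t)^2]$. For part (i), set $t=x_\star d-D$ and $\ell=r$. Lemma~\ref{lem:endpoint1}(i) gives only an upper bound on the first moment, so the first step is a matching lower bound. We plan to redo the integral in \eqref{eq:arrival1} with $b=2$. The integrand $e^{\lambda s}2^{-d}A_s^rB_s^{d-r}H_{d,r}(s)$ is concentrated within $O(1)$ of $x_\star d-D$, since its logarithmic derivative there is approximately $\chi$. Near $s=x_\star d$ we have $A_s/B_s\to R$. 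Because $D,r\le C\log\log d$, the quadratic corrections are of order $(\log\log d)^2/d=o(1)$. This yields $\E_x[J_y(t)]\asymp_C e^{-\chi D}R^r$, uniformly in the stated range; this is essentially the argument of Lemma~\ref{lem:endpoint1}(ii) with $S_d=-D$ and $\ell=r$ allowed to grow slowly.

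The second moment is the main obstacle. We write $\E[J(J-1)]$ via the many-to-two formula of \cite{HR}, i.e.\ the same decomposition used for Lemma~\ref{lem:cluster1} and Lemma~\ref{lem:cluster2}. Two contributions arise. The first comes from pairs of jumps on the same lineage or with a very recent common ancestor; Lemma~\ref{lem:cluster2} with $K=O(1)$ should control the part where both jumps occur within $O(1)$ time of each other, giving $O(\E J)$. The second comes from a branch point at time $\sigma$ at some location $z$, after which two independent copies each hit $y$. Its contribution is
\[
 2\lambda\int_0^t e^{\lambda\sigma}\E_x\bigl[\E_{X(\sigma)}[J_y(t-\sigma)]^2\bigr]\dd\sigma .
\]
For $\sigma$ of order $1$ the inner quantity is at most $(e^{-\chi D}R^{r})^2$ times a constant: an early branch point is essentially at distance $d-r\pm O(1)$. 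For $\sigma$ of order $d$, Lemma~\ref{lem:cluster1}-type coordinatewise factorisation shows that the ratio to $(\E J)^2$ stays bounded. This is exactly the convexity estimate from Appendix~\ref{app:cluster1}, now run up to time $x_\star d-D$ instead of $x_\star d$, which costs a factor $e^{\chi D}$ at most. Since $\E J\le1$ by the hypothesis $e^{-\chi D}R^r\le1$, we expect the combined bound $\E[J^2]\ll_C \E J+(\E J)^2\ll \E J$. Paley--Zygmund then gives $\Pp_x(\tau_y\le t)\gg \E J\asymp e^{-\chi D}R^r$. The delicate point is the uniformity in $r\le C\log\log d$ for branch points at intermediate times. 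There we would bound the coordinatewise ratio factor by $\exp(O(r^2/d)+O(1))$, using $(R-1)^2<\lambda$ from Lemma~\ref{lem:binary1} so that the weight $e^{\lambda\sigma}$ dominates the growth in overlap.

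For part (ii), we write $d_{\rm H}(x,y)=d-m$ with $m\ge \ell/2$ and use time $t=x_\star d-2\delta\ell$. The first moment from \eqref{eq:arrival1} behaves like $\exp(d\Phi(t/d)+m\log(A_t/B_t)+O(1))$. With $t/d\approx x_\star-2\delta\ell/d$, the exponent is at least $-2\delta\chi'\ell+\tfrac{\ell}{2}\log R'-O(1)$, where $\chi'$ and $R'$ are uniform bounds near $x_\star$. Choosing $\delta$ small makes this $\ge -O(1)$ uniformly in $\ell\le d$, and for $\ell$ of order $d$ this holds by uniform continuity of $\Phi$ on compacts. The second moment bound is then the same all-distances estimate as in Lemma~\ref{lem:cluster1}, which there controls $\E[N^2]/\E[N]^2$ uniformly in the Hamming distance. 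Applying Paley--Zygmund to the occupation count $N_y$ at the fixed time $t$, as in Lemma~\ref{lem:cluster1}, finishes part (ii).
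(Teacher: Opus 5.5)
Your overall strategy (Paley--Zygmund, many-to-two, coordinatewise factorisation, convexity) is the paper's. However, the second-moment step in part~(i) has a genuine gap, and it sits exactly where the lemma has content.

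\emph{The gap.} By your own accounting, branching at times of order $d$ ``costs a factor $e^{\chi D}$''. If the ratio to $(\E J)^2$ is only bounded by $Ce^{\chi D}$, you get
$\E[J^2]\ll \E J+e^{\chi D}(\E J)^2\asymp \E J\,(1+R^r)$.
Paley--Zygmund then yields only $\Pp_x(\tau_y\le x_\star d-D)\gg e^{-\chi D}$, so the factor $R^r$, with $r$ up to $C\log\log d$, is lost. That factor is the whole point of part~(i): Step~2 of the upper bound feeds it the weights $R^{d-d_{\rm H}(X_v,y)}$, whose exponents are of order $\log\log d$.

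The true cost of late branching is $\mu^{-1}=e^{\chi D}R^{-r}$, and it must come with exponential decay in $t-\sigma$ across the whole window $K\le t-\sigma\le\epsilon d$. Your sketch does not cover this window:
\begin{itemize}
\item Lemma~\ref{lem:cluster2} only reaches $t-\sigma\le K$, since its constant $e^{C(1+K)}$ blows up.
\item Lemma~\ref{lem:cluster3} is proved only at $q=q_\star$.
\item Your remedy via $\exp(O(r^2/d)+O(1))$ and $(R-1)^2<\lambda$ concerns the other end: $\lambda-(R-1)^2$ is precisely the decay rate in the branch time near $\sigma=0$, not near $\sigma=t$.
\end{itemize}
Ancestral pairs of jumps with time gap larger than $O(1)$ are also left untreated.

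\emph{The missing ingredient} is uniform strong convexity in the branch time. It is cleanest with $N_y(t)$ at the fixed time $t=x_\star d-D$ rather than with $J_y$, for three reasons:
\begin{itemize}
\item $\mu=\E_x[N_y(t)]$ is explicit, and $\log\mu=-\chi D+r\log R+O((\log\log d)^2/d)$ by Taylor, so there is no need to redo \eqref{eq:arrival1}.
\item Only split pairs occur, so there is no ancestral term.
\item The many-to-two formula reduces $\E_x[N_y(t)^2]/\mu^2-\mu^{-1}$ to $2\lambda\int_0^t\exp(dG_\alpha(e^{-h},q))\dd s$, with $q=e^{-2t/d}$ and $h$ proportional to the branch time. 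With $J$, the square $(\E_zJ_y(t-\sigma))^2$ is a double time integral that does not factor over coordinates.
\end{itemize}
For $b=2$, each nonlinear term of $G_\alpha$ has the form $\log(1-\theta+\theta e^{2h})$. Hence $\partial_h^2G_\alpha\ge c$ uniformly in $\alpha\in[0,1]$ and $q\in[q_\star,\bar q]$ with $\bar q<1/2$. The endpoint values are $G_\alpha(1,q)=0$ and $G_\alpha(q,q)=-\frac1d\log\mu$. Comparing with the chord gives
$e^{dG_\alpha}\le\max\{1,\mu^{-1}\}\exp(-c'd\min\{h,-\log q-h\})$,
hence $\E_x[N_y(t)^2]\ll\mu+\mu^2$. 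This handles early, intermediate and late branching at once.

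\emph{Part~(ii)} needs the same lemma. Lemma~\ref{lem:cluster1}'s estimate holds only at $q=q_\star$. At $t=x_\star d-2\delta\ell$ one has $q=q_\star e^{4\delta\ell/d}$, which moves by a fixed amount when $\ell\asymp d$. Your first-moment computation, which for small $\delta$ gives $\mu\ge R^{\ell/4}\ge1$, makes $G_\alpha(q,q)\le0$. The same chord bound then gives a bounded second-moment ratio.
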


Lemma \ref{lem:binary2} may be compared to Lemma~\ref{lem:cluster1}, which gives a uniformly positive hitting probability but does not retain the small factor
$e^{-\chi D}R^r$ at an earlier time.  The proof of Lemma \ref{lem:binary2} follows \cite[Proposition~24]{BZ}, employing the second moment method.  The details are given in
Appendix~\ref{app:binary1}.

\subsubsection{The weighted early population}\label{sec:binary4}

The lower bound \eqref{eq:rare1} of Lemma \ref{lem:binary2}(i) shows that if the target is almost antipodal to the origin, every move from the origin to the target scales the first hitting probability up by around a factor of $R>1$. The idea is then to apply this observation to design a weighted second moment method that counts particle hits,\footnote{The weighted approach is inspired by  \cite[Appendix~D.2]{BZ}, where a particle is weighted by its probability of hitting the target in a future time interval.} conditional on an early time that is of order $\asymp\log\log d$. In this section, we introduce the weights. Let $D_v(s)$ be the number of mutations on the ancestral line of particle $v$ until time $s$. For $s$ small, it is very likely that the mutations are all in a favorable direction toward the target. In view of \eqref{eq:rare1}, this motivates the definition

\begin{equation}\label{eq:weights1}
 Z_R(s)=\sum_{v\in V_s}R^{D_v(s)},\qquad
 M_R(s)=e^{-\chi s}Z_R(s).
\end{equation}
The sum $Z_R(s)$ counts the total weight of the early population.  A branching event doubles the weight, while a mutation multiplies the weight
by $R$.  It then follows from \eqref{eq:binary13} that the drift of $Z_R(s)$ is $\chi Z_R(s)$ and that $(M_R(s))$ is a martingale.  


For a mutation event $\xi$ occurring by time $s$, define 
\[
 Z_\xi(s)=
 \sum_{\substack{v\in V_s:\,v\text{ descends from the particle}\\
                   \text{immediately after the mutation }\xi}}
 R^{D_v(s)},\qquad
 H_s^2=\sum_{\xi:\,\xi\text{ occurs by time }s}
 \left(\frac{Z_\xi(s)}{Z_R(s)}\right)^2.
\]
Figure~\ref{fig:weighted1} illustrates these definitions. Our next two results control $H_s$ from above.

\begin{figure}[ht]
\centering
\begin{tikzpicture}[x=1cm,y=1cm,>=stealth,
  every node/.style={font=\footnotesize},
  branch/.style={draw=gray!75!black,line width=1pt},
  mutation/.style={circle,fill=red!75!black,minimum size=4.8pt,inner sep=0pt},
  selected/.style={circle,fill=red!75!black,minimum size=5.2pt,inner sep=0pt},
  leaf/.style={circle,draw,fill=white,minimum size=5.8mm,inner sep=0pt}]
  \node[font=\small\bfseries] at (5.45,4.43) {weighted tree at time $s$};

  \draw[rounded corners=3pt,dashed,draw=orange!85!black,fill=orange!3]
    (2.85,1.72) rectangle (10.45,4.00);
  \node[anchor=north,fill=white,inner sep=1.5pt,font=\scriptsize,
    text=orange!85!black]
    at (6.65,4.00)
    {$Z_{\xi_1}(s)=R^2+R$: sum over the descendants of $\xi_1$};

  \coordinate (rootw) at (.55,1.75);
  \coordinate (splitw) at (2.15,1.75);
  \coordinate (xi1) at (3.25,2.55);
  \coordinate (splitup) at (4.45,2.55);
  \coordinate (xi2) at (5.55,2.95);
  \coordinate (splitlow) at (4.45,.88);
  \coordinate (xi3) at (5.55,.35);

  \draw[branch] (rootw)--(splitw)--(xi1)--(splitup);
  \draw[branch] (splitup)--(xi2)--(7.05,2.95);
  \draw[branch] (splitup)--(7.05,2.05);
  \draw[branch] (splitw)--(splitlow)--(7.05,1.18);
  \draw[branch] (splitlow)--(xi3)--(7.05,.20);

  \fill (rootw) circle (1.7pt);
  \fill (splitw) circle (1.7pt);
  \fill (splitup) circle (1.7pt);
  \fill (splitlow) circle (1.7pt);
  \node[anchor=east] at (.42,1.75) {$\emptyset$};

  \node[mutation] at (xi1) {};
  \node[above=3pt,font=\scriptsize,text=red!75!black] at (xi1)
    {$\xi_1:i_1$};
  \node[selected] at (xi2) {};
  \node[above=3pt,font=\scriptsize,text=red!75!black] at (xi2)
    {$\xi_2:i_2$};
  \node[selected] at (xi3) {};
  \node[below=3pt,font=\scriptsize,text=red!75!black] at (xi3)
    {$\xi_3:i_3$};

  \node[leaf] (wv1) at (7.05,2.95) {$v_1$};
  \node[leaf] (wv2) at (7.05,2.05) {$v_2$};
  \node[leaf] (wv4) at (7.05,1.18) {$v_4$};
  \node[leaf] (wv3) at (7.05,.20) {$v_3$};
  \node[anchor=west] at (7.48,2.95)
    {$R^{D_{v_1}(s)}=R^2$};
  \node[anchor=west] at (7.48,2.05)
    {$R^{D_{v_2}(s)}=R$};
  \node[anchor=west] at (7.48,1.18)
    {$R^{D_{v_4}(s)}=1$};
  \node[anchor=west] at (7.48,.20)
    {$R^{D_{v_3}(s)}=R$};

\end{tikzpicture}
\caption{Solid dots indicate events. The dashed region contains
the descendants from the event $\xi_1$, whose weights sum to $Z_{\xi_1}(s)$.}
\label{fig:weighted1}
\end{figure}


\begin{proposition}
\label{prop:weights1}
The martingale $M_R(s)$ converges in $L^2$ and almost surely to a random
variable $M_R(\infty)$ that is positive almost surely.  Moreover,
\begin{equation}\label{eq:balance1}
 \sup_{s\ge0}\E\!\left[e^{-2\chi s}
             \sum_{\xi:\,\xi\text{ occurs by time }s}Z_\xi(s)^2\right]<\infty.
\end{equation}
\end{proposition}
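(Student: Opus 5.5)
The plan is a direct second-moment computation. It uses only the generator of the weighted process and the inequality $(R-1)^2<\lambda$ from Lemma~\ref{lem:binary1}.

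Write $w_v=R^{D_v(s)}$ for the weight of $v\in V_s$. Each particle independently branches at rate $\lambda$, which adds a new particle of weight $w_v$. It mutates at rate one, which replaces $w_v$ by $Rw_v$, an increment of $(R-1)w_v$. Hence the drift of $Z_R$ is $(\lambda+R-1)Z_R=\chi Z_R$ by \eqref{eq:binary13}. The predictable quadratic variation of $Z_R$ has density $(\lambda+(R-1)^2)\sum_v w_v^2$.

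\emph{Step 1: the sum of squared weights.} Let $Q(s)=\sum_{v\in V_s}w_v^2$. Branching adds $w_v^2$ and mutation adds $(R^2-1)w_v^2$. Applying \eqref{eq:event1} therefore gives $\E[Q(s)]=e^{(\lambda+R^2-1)s}$.

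\emph{Step 2: the second moment of $Z_R$.} From the drift and quadratic variation,
\[
\frac{\dd}{\dd s}\E[Z_R(s)^2]=2\chi\E[Z_R(s)^2]+(\lambda+(R-1)^2)\E[Q(s)].
\]
Solving this linear equation yields
\[
\E[M_R(s)^2]=1+(\lambda+(R-1)^2)\int_0^s e^{-(2\chi-\lambda-R^2+1)r}\dd r .
\]
The exponent satisfies $2\chi-\lambda-R^2+1=\lambda-(R-1)^2>0$ by Lemma~\ref{lem:binary1}. So $\sup_s\E[M_R(s)^2]<\infty$, and the martingale convergence theorem gives convergence both almost surely and in $L^2$. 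In particular $\E[M_R(\infty)]=1$.

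\emph{Step 3: positivity.} Let $q=\Pp(M_R(\infty)=0)$. By $L^2$ convergence, $q<1$. Condition on the first event, which occurs at an exponential time $\tau$ of rate $\lambda+1$.
\begin{itemize}
\item At a mutation, $M_R(\infty)=e^{-\chi\tau}R\,M'$, where $M'$ is an independent copy of $M_R(\infty)$.
\item At a branching, $M_R(\infty)=e^{-\chi\tau}(M'+M'')$, where $M',M''$ are independent copies.
\end{itemize}
Hence $q=(q+\lambda q^2)/(1+\lambda)$, that is, $\lambda q(1-q)=0$. Since $q<1$, this forces $q=0$.

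\emph{Step 4: the bound \eqref{eq:balance1}.} Consider a mutation $\xi$ at time $r\le s$ on a particle of weight $w$. The particle just after $\xi$ has weight $Rw$. By the branching property, $Z_\xi(s)$ equals $Rw$ times an independent copy of $Z_R(s-r)$. Step 2 then gives
\[
\E[Z_\xi(s)^2\mid\cF_r]\le C R^2w^2e^{2\chi(s-r)} .
\]
Mutations occur at rate one per particle, so \eqref{eq:event1} together with Step 1 gives
\[
e^{-2\chi s}\E\Bigl[\sum_\xi Z_\xi(s)^2\Bigr]\le CR^2\int_0^s e^{(\lambda+R^2-1)r}e^{-2\chi r}\dd r .
\]
This is bounded uniformly in $s$ by the same inequality $(R-1)^2<\lambda$.

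The only real constraint in the argument is this exponent comparison, $\lambda+R^2-1<2\chi$. It is supplied by Lemma~\ref{lem:binary1}, so I expect no serious obstacle. The step needing most care is justifying the generator identities, for instance by localisation, since $Z_R$ is unbounded; monotone convergence handles this because all the integrands are nonnegative.
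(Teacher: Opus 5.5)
Your proposal is correct and follows essentially the same route as the paper: the exact formula $\E[\sum_v R^{2D_v(s)}]=e^{(\lambda+R^2-1)s}$, the quadratic-variation computation for $M_R$ with the exponent $\lambda-(R-1)^2>0$ supplied by Lemma~\ref{lem:binary1}, and the branching-property identity $\E[Z_\xi(s)^2\mid\cF_r]=R^2w^2\E[Z_R(s-r)^2]$ integrated against the unit mutation rate. The only difference is cosmetic and lies in the positivity step: you derive $\lambda q(1-q)=0$ from a first-event decomposition, whereas the paper conditions on $\cF_n$ to get $q=\E[q^{|V_n|}]$ and lets $|V_n|\to\infty$; both arguments rest on $q<1$, which follows from $\E[M_R(\infty)]=1$.
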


\begin{proof}
We first prove that $(M_R(s))$ converges in $L^2$. 
Using a similar argument to where we proved $(M_R(s))$ is a martingale, we have
\begin{equation}\label{eq:weighted1}
 \E\!\left[\sum_{v\in V_s}R^{2D_v(s)}\right]
 =\exp((\lambda+R^2-1)s).
\end{equation}
Indeed, a branching event of a particle of weight $z$ increases the sum on
the left by $z^2$, while a mutation increases it by $(R^2-1)z^2$.  Since the jumps of
$Z_R$ are $z$ and $(R-1)z$ with rates $\lambda$ and 1, we have
\begin{equation*}
\E[(M_R(s)-M_R(0))^2]= (\lambda+(R-1)^2)\int_0^s e^{-2\chi t}
 \E\!\left[\sum_{v\in V_t}R^{2D_v(t)}\right]\dd t.
\end{equation*}
By \eqref{eq:weighted1} and
Lemma~\ref{lem:binary1},
\[
\begin{split}
 \sup_{s\ge0}(\lambda+(R-1)^2)\int_0^s e^{-2\chi t}
 \E\!\left[\sum_{v\in V_t}R^{2D_v(t)}\right]\dd t
 &\ll \int_0^\infty e^{(\lambda+R^2-1-2\chi)t}\dd t\\
 &=\int_0^\infty e^{-(\lambda-(R-1)^2)t}\dd t\ll1.
\end{split}
\]
Hence, by the martingale convergence theorem, $(M_R(s))$ converges almost surely and in $L^2$ and
has mean one.

Conditionally on $\cF_n$, the branching property yields
$$M_R(\infty)
=\sum_{v\in V_n}e^{-\chi n}R^{D_v(n)}
M_R^{(v)}(\infty),$$
where $M_R^{(v)}(\infty)$ are independent copies of $M_R(\infty)$. Therefore, 
\[
 \Pp(M_R(\infty)=0)
 =\E\!\left[\Pp(M_R(\infty)=0)^{|V_n|}\right].
\]
 As $|V_n|\to\infty$ almost surely, this shows  $\Pp(M_R(\infty)=0)=0$.


It remains to prove \eqref{eq:balance1}. 
Using in order \eqref{eq:event1}, \eqref{eq:weights1}, and
\eqref{eq:weighted1}, we have
\[
\begin{split}
 \E\!\left[\sum_{\xi:\,\xi\text{ occurs by time }s}Z_\xi(s)^2\right]
 &=\int_0^s R^2\E[Z_R(s-t)^2]\,
 \E\!\left[\sum_{v\in V_t}R^{2D_v(t)}\right]\dd t\\
 &\ll R^2\int_0^s e^{2\chi(s-t)}
 \E\!\left[\sum_{v\in V_t}R^{2D_v(t)}\right]\dd t\\
 &\ll R^2e^{2\chi s}\int_0^s
 e^{-(\lambda-(R-1)^2)t}\dd t\\
 &\ll e^{2\chi s},
\end{split}
\]
where in the last step we have used
\[
 2\chi-(\lambda+R^2-1)=\lambda-(R-1)^2>0,
\]
which follows from  Lemma~\ref{lem:binary1}. Dividing by $e^{2\chi s}$
proves \eqref{eq:balance1}.
\end{proof}

\begin{lemma}\label{lem:martingale2}
For every $\epsilon>0$, there exist $w>0$ and $K<\infty$ such that, for all
sufficiently large $s$,
\begin{equation*}
 \Pp\!\left(M_R(s)\ge w,\ H_s\le K\right)\ge1-\epsilon.
\end{equation*}
\end{lemma}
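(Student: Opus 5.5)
The plan is to combine Proposition~\ref{prop:weights1} with Markov's inequality, using the identity $H_s^2 = e^{-2\chi s}\sum_\xi Z_\xi(s)^2 / M_R(s)^2$. The lower bound on $M_R(s)$ will come from almost-sure positivity of the limit, and the upper bound on $H_s$ from the uniform second-moment bound \eqref{eq:balance1}.

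\emph{Step 1: choose $w$ and the lower bound on $M_R(s)$.} Given $\epsilon>0$, Proposition~\ref{prop:weights1} says $M_R(\infty)>0$ almost surely. So we may pick $w>0$ with $\Pp(M_R(\infty)<3w)\le\epsilon/3$. Since $M_R(s)\to M_R(\infty)$ almost surely, and hence in probability, for all sufficiently large $s$ we have $\Pp(|M_R(s)-M_R(\infty)|>w)\le\epsilon/3$. Combining the two,
\[
\Pp(M_R(s)<2w)\le 2\epsilon/3
\]
for $s$ large. This is where the restriction ``for all sufficiently large $s$'' enters. One could instead use the $L^2$ convergence together with Chebyshev's inequality, but almost-sure convergence is enough.

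\emph{Step 2: bound $H_s$ using \eqref{eq:balance1}.} Write $\Sigma_s := e^{-2\chi s}\sum_{\xi}Z_\xi(s)^2$. Since $Z_R(s)=e^{\chi s}M_R(s)$, we have $H_s^2=\Sigma_s/M_R(s)^2$. By \eqref{eq:balance1}, $\sup_s\E[\Sigma_s]\le C_\Sigma<\infty$. Markov's inequality then gives
\[
\Pp(\Sigma_s> 4w^2K^2)\le C_\Sigma/(4w^2K^2)\le\epsilon/3
\]
once $K$ is chosen large, depending on $w$. Note that $w$ must be fixed before $K$.

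\emph{Step 3: combine.} On $\{M_R(s)\ge 2w\}\cap\{\Sigma_s\le 4w^2K^2\}$ we have $M_R(s)\ge w$ and $H_s^2\le 4w^2K^2/(4w^2)=K^2$, so $H_s\le K$. The union bound then gives
\[
\Pp\!\left(M_R(s)\ge w,\ H_s\le K\right)\ge1-\epsilon
\]
for all large $s$.

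I do not expect a real obstacle, since the heavy lifting is already in Proposition~\ref{prop:weights1}. The one point to handle carefully is the dependence between $M_R(s)$ and the sum of squares. This is avoided by never taking expectations of the ratio $H_s^2$: we only bound the numerator $\Sigma_s$ in expectation and the denominator $M_R(s)$ in probability, separately, and intersect the two events.
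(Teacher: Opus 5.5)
Your proposal is correct and follows essentially the same approach as the paper. The paper chooses $w$ from the almost-sure positivity and convergence of $M_R$, bounds $H_s^2\le w^{-2}e^{-2\chi s}\sum_\xi Z_\xi(s)^2$ on $\{M_R(s)\ge w\}$, and applies Markov's inequality with \eqref{eq:balance1}; your extra $2w$ slack is harmless but unnecessary, since the paper works directly on the intersection with $\{M_R(s)\ge w\}$.
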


\begin{proof}
By Proposition~\ref{prop:weights1}, clearly one can choose $w>0$ so that for $s$ large enough, $\Pp(M_R(s)<w)<\epsilon/2$. 
On $\{M_R(s)\ge w\}$, one has $Z_R(s)\ge we^{\chi s}$, and hence by definition
\[
 H_s^2\le w^{-2}e^{-2\chi s}
 \sum_{\xi:\,\xi\text{ occurs by time }s}Z_\xi(s)^2.
\]
Markov's inequality and \eqref{eq:balance1} of Proposition~\ref{prop:weights1} therefore give
\begin{equation*}
 \Pp(H_s>K,\ M_R(s)\ge w)
 \le\frac1{w^2K^2}\E\!\left[e^{-2\chi s}
       \sum_{\xi:\,\xi\text{ occurs by time }s}Z_\xi(s)^2\right]\ll_w K^{-2}.
\end{equation*}
 Choosing $K$ large enough then suffices.
\end{proof}

We have argued above \eqref{eq:weights1} that initial mutations tend to favor a target that is near the antipode. However, there are two sources of adversarial mutations:
\begin{itemize}
    \item a mutation that picks a coordinate where origin and target already agree;
    \item a mutation that picks a coordinate that has already been mutated before.
\end{itemize}
The next two lemmas quantify these two cases, respectively.


\begin{lemma}\label{lem:binary3}
On the event that no coordinate is selected by two distinct
mutation events until time $s$, we have uniformly in $y$ with zero-coordinates 
$S\subseteq\{1,\ldots,d\}$ satisfying $|S|=\ell$,
\begin{equation}\label{eq:overlap1}
 \sum_{v\in V_s}R^{d-d_{\rm H}(X_v(s),y)}
 \ge Z_R(s)R^{\ell-2H_s\sqrt\ell}.
\end{equation}
\end{lemma}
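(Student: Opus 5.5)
The plan is to write the Hamming overlap of each particle with $y$ exactly in terms of its ancestral mutations, and then use Jensen's inequality with respect to the weights $R^{D_v(s)}/Z_R(s)$, followed by Cauchy--Schwarz over the mutation events.

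\emph{Step 1: exact overlap formula.} The process starts at $\z$, so initially $d-d_{\rm H}(\z,y)=\ell$: the origin agrees with $y$ exactly on $S$. Work on the event that no coordinate is selected by two distinct mutation events up to time $s$. Then every mutation on the ancestral line of $v\in V_s$ flips a coordinate from $0$ to $1$ for the first time along that line.
\begin{itemize}
\item A flip at a coordinate outside $S$ creates a new agreement with $y$.
\item A flip at a coordinate in $S$ destroys an agreement with $y$.
\end{itemize}
Let $B_v$ be the number of mutations on the ancestral line of $v$ that select a coordinate in $S$. Then
\[
 d-d_{\rm H}(X_v(s),y)=\ell+(D_v(s)-B_v)-B_v=\ell+D_v(s)-2B_v .
\]
Consequently,
\[
\sum_{v\in V_s}R^{d-d_{\rm H}(X_v(s),y)}=R^\ell\sum_{v\in V_s}R^{D_v(s)}R^{-2B_v}.
\]

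\emph{Step 2: Jensen.} Define the probability weights $w_v=R^{D_v(s)}/Z_R(s)$ on $V_s$. Since $x\mapsto R^{-2x}$ is convex, Jensen's inequality gives
\[
\sum_v R^{D_v(s)}R^{-2B_v}=Z_R(s)\sum_v w_vR^{-2B_v}\ge Z_R(s)R^{-2\sum_v w_vB_v}.
\]
Write $B_v=\sum_{\xi}\one\{\xi\text{ selects a coordinate in }S,\ v\text{ descends from }\xi\}$ and interchange the sums. By the definition of $Z_\xi(s)$, this yields
\[
\sum_v w_vB_v=\sum_{\xi\text{ selecting }S}\frac{Z_\xi(s)}{Z_R(s)} .
\]

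\emph{Step 3: Cauchy--Schwarz.} On our event, distinct mutation events select distinct coordinates. Hence at most $|S|=\ell$ events $\xi$ select coordinates in $S$. Cauchy--Schwarz over these at most $\ell$ events gives
\[
\sum_{\xi\text{ selecting }S}\frac{Z_\xi(s)}{Z_R(s)}\le\sqrt{\ell}\Bigl(\sum_\xi\Bigl(\frac{Z_\xi(s)}{Z_R(s)}\Bigr)^2\Bigr)^{1/2}=\sqrt\ell\,H_s .
\]
Since $R>1$, $R^{-2x}$ is decreasing in $x$, so this upper bound can be inserted into the Jensen bound. Combining Steps 1--3 yields \eqref{eq:overlap1}. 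Nothing in the argument depends on $y$ beyond $|S|=\ell$, so the bound is uniform in $y$.

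The only delicate point is Step 1. There one must check that, on the no-repeated-coordinate event, every ancestral mutation is a genuine $0\to1$ flip of a previously untouched coordinate, so that the overlap formula is exact. The remaining steps are routine convexity and bookkeeping.
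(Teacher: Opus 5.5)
Your proposal is correct and follows the paper's proof essentially step for step: the exact overlap identity $d-d_{\rm H}(X_v(s),y)=\ell+D_v(s)-2m_S(v)$ on the no-repeat event, Jensen's inequality with the weights $R^{D_v(s)}/Z_R(s)$, rewriting the weighted average of $m_S(v)$ as $\sum_{\xi}Z_\xi(s)/Z_R(s)$ over events $\xi$ choosing a coordinate in $S$, and Cauchy--Schwarz over at most $\ell$ such events. Nothing is missing.
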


\begin{proof}
  Let $m_S(v)$ be
the number of mutation events on the ancestral line of $v$ whose chosen coordinate lies in
$S$.  Then
\begin{align}
     d-d_{\rm H}(X_v(s),y)=\ell+D_v(s)-2m_S(v).\label{dd}
\end{align}
Give each leaf $v\in V_s$ probability $R^{D_v(s)}/Z_R(s)$.  Averaging $m_S(v)$
with these probabilities gives
\[
\begin{aligned}
 \sum_v\frac{R^{D_v(s)}}{Z_R(s)}m_S(v)
 &=\frac{1}{Z_R(s)}
   \sum_v R^{D_v(s)}
   \sum_{\substack{\xi:\,\xi\text{ occurs by time }s\\
                    \xi\text{ chooses a coordinate in }S\\
                    v\text{ descends from }\xi}}1\\
 &=\frac{1}{Z_R(s)}
   \sum_{\substack{\xi:\,\xi\text{ occurs by time }s\\
                    \xi\text{ chooses a coordinate in }S}}
   \sum_{\substack{v\in V_s:\\v\text{ descends from }\xi}}
   R^{D_v(s)}\\
 &=\sum_{\substack{\xi:\,\xi\text{ occurs by time }s\\
                    \xi\text{ chooses a coordinate in }S}}
   \frac{Z_\xi(s)}{Z_R(s)}.
\end{aligned}
\]
Applying \eqref{dd}, Jensen's inequality, and then Cauchy--Schwarz gives
\[
\begin{aligned}
 \frac{1}{Z_R(s)}\sum_vR^{d-d_{\rm H}(X_v(s),y)}
 &=R^\ell\sum_v\frac{R^{D_v(s)}}{Z_R(s)}
 R^{-2m_S(v)}\\
 &\ge R^{\ell-2\sum_v R^{D_v(s)}m_S(v)/Z_R(s)}\\
 &=R^{\ell-2\sum_{\substack{\xi:\,\xi\text{ occurs by time }s\\
                   \xi\text{ chooses a coordinate in }S}}
                   Z_\xi(s)/Z_R(s)}\\
 &\ge R^{\ell-2H_s\sqrt\ell},
\end{aligned}
\]
where in the last step we used that the sum over mutation events whose coordinates lie in $S$ has at most
$\ell$ terms as coordinates do not repeat.  This proves
\eqref{eq:overlap1}.
\end{proof}



\begin{lemma}\label{lem:binary4}
Fix $0<\kappa<\infty$, and let $a$ be a constant supplied by
Lemma~\ref{lem:binary1}.  Then, with
probability tending to one:
\begin{enumerate}[label=(\roman*),leftmargin=*]
\item no coordinate is chosen by two mutation events before
$\kappa\log\log d$;
\item $\max_{v\in V_{\kappa\log\log d}}D_v(\kappa\log\log d)
\le a\kappa\log\log d$.
\end{enumerate}
\end{lemma}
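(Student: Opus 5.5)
Both parts will be proved by first-moment (union) bounds over the early population, using the many-to-one identity \eqref{eq:spine1} and the event identity \eqref{eq:event1}. Set $t_d=\kappa\log\log d$, so that $e^{\lambda t_d}=(\log d)^{\lambda\kappa}$ is only polylogarithmic in $d$.

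For (ii), the number of mutations along a single lineage up to time $t_d$ is $\operatorname{Pois}(t_d)$, since the total mutation rate is one. By \eqref{eq:spine1},
\[
\Pp\Bigl(\max_{v\in V_{t_d}}D_v(t_d)>a t_d\Bigr)\le e^{\lambda t_d}\,\Pp\bigl(\operatorname{Pois}(t_d)\ge a t_d\bigr).
\]
The first inequality in \eqref{eq:chernoff1}, applied with $x=a t_d\ge\mu=t_d$, bounds the right side by
\[
\exp\bigl(\lambda t_d - t_d(a\log a-a+1)\bigr).
\]
Lemma~\ref{lem:binary1} gives $a\log a-a+1>\lambda$, so the exponent is $-c\,t_d$ with $c>0$. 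Since $t_d\to\infty$, this bound tends to zero.

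For (i), I will count ordered pairs of mutation events before $t_d$ that choose the same coordinate. Two such events lie either on a common ancestral line or on two lines that separated at some branching time. I handle both cases at once by bounding the expected number of pairs $(\xi_1,\xi_2)$ of distinct mutation events before $t_d$, since each coordinate is chosen uniformly with probability $1/d$ independently of everything else. Conditioning on the genealogy and the mutation times, the probability that two given events pick the same coordinate is exactly $1/d$. Hence the expected number of coinciding pairs equals $d^{-1}\E[N(N-1)]$, where $N$ is the total number of mutation events before $t_d$. Since $\E[N]=\int_0^{t_d}e^{\lambda r}\dd r\le e^{\lambda t_d}/\lambda$, it remains to bound the second factorial moment $\E[N(N-1)]$.

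Given the Yule tree, $N$ is Poisson with mean equal to its total branch length $\int_0^{t_d}Z_r\dd r$. Therefore
\[
\E[N(N-1)]=\E\Bigl[\Bigl(\int_0^{t_d}Z_r\dd r\Bigr)^2\Bigr]\le t_d^2\,\E[Z_{t_d}^2]\ll t_d^2e^{2\lambda t_d},
\]
using that $Z$ is nondecreasing and that the geometric law \eqref{eq:yule2} gives $\E[Z_t^2]\le 2e^{2\lambda t}$. The expected number of coincidences is then $\ll (\log\log d)^2(\log d)^{2\lambda\kappa}/d=o(1)$, and Markov's inequality gives (i).

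The main point of care is justifying that coordinate choices are independent of the tree and of the mutation times. This holds by the graphical construction: each mutation picks its coordinate uniformly, at rate $1/d$ per coordinate. Once that is in place, (i) is routine, and (ii) is exactly where the constant $a$ from Lemma~\ref{lem:binary1} is used.
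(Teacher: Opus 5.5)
Your proposal is correct and follows the paper's proof essentially step for step. Part (ii) is the same many-to-one plus Poisson--Chernoff bound driven by $a\log a-a+1>\lambda$. Part (i) is the same first-moment count over pairs of mutation events, $\frac1d\E[N(N-1)]=\frac1d\E[(\int_0^{t_d}Z_r\,\mathrm{d}r)^2]$. The only difference is that you bound the second moment crudely by $t_d^2\E[Z_{t_d}^2]$ rather than via $\E[|V_u||V_v|]\ll e^{\lambda(u+v)}$, which costs a harmless $(\log\log d)^2$ factor.
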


\begin{proof}
Conditioning on the underlying Yule process, the total mutation count is distributed as 
$\mathrm{Pois}(\int_0^{\kappa\log\log d}|V_t|\dd t)$.  Using $\E[|V_u||V_v|]
 \ll e^{\lambda(u+v)}$, we have
\[
 \E\!\left[\left(\int_0^{\kappa\log\log d}|V_t|\dd t\right)^2\right]
 \ll e^{2\lambda\kappa\log\log d}.
\]
A union bound over pairs of coordinates gives
\[
 \Pp(\text{a coordinate is repeated})
 \le\frac1{2d}
 \E\!\left[\left(\int_0^{\kappa\log\log d}|V_t|\dd t\right)^2\right]
 \ll\frac{e^{2\lambda\kappa\log\log d}}d=o(1).
\]This proves (i). 
The many-to-one identity \eqref{eq:spine1} and the bound \eqref{eq:chernoff1} give
\[
\begin{aligned}
 \E\!\left[\#\{v\in V_{\kappa\log\log d}:
 D_v(\kappa\log\log d)>a\kappa\log\log d\}\right]
 &=e^{\lambda\kappa\log\log d}
 \Pp\!\left(\operatorname{Pois}(\kappa\log\log d)
 >a\kappa\log\log d\right)\\
 &\le e^{-(a\log a-a+1-\lambda)\kappa\log\log d}=o(1).
\end{aligned}
\]
Markov's inequality then proves (ii).
\end{proof}

\subsubsection{Completion of the upper bound}\label{sec:binary5}

\begin{proof}[Proof of the upper bound in Theorem~\ref{thm:cover2}]
We split the proof into three steps. 
\par\smallskip\noindent\emph{Step 1: setting up stages.} 
Fix $\epsilon>0$. For $A\ge0$, put
\begin{equation*}
 T_{A,d}^+=x_\star d+\chi^{-1}\log\log d+A.
\end{equation*}
Our goal is to prove that
\[
 \lim_{A\to\infty}\limsup_{d\to\infty}
 \Pp\!\left(\tcov(d)>T_{A,d}^+\right)=0.
\]
We split the targets on the hypercube into shells by the distance $d-\ell$ from the origin $\z$, where $0\leq\ell\leq d$. To do this, we need some preparations.

Take
$\delta$ from Lemma~\ref{lem:binary2}(ii) satisfying $\delta<\min\{x_\star/2,1/2\}$.  Decreasing
$\delta$ only lengthens the time
allowed in Lemma~\ref{lem:binary2}(ii), so its uniform lower bound
remains valid.  Then choose a fixed $C_{\rm shell}$ so large that
\begin{equation}\label{eq:shell1}
 \lambda\delta C_{\rm shell}>2.
\end{equation}
Take $a$ from Lemma~\ref{lem:binary1}.  Since
$\chi-a\log R>0$, we may choose $\kappa$ so large that
\begin{equation}\label{eq:binary15}
 \kappa(\chi-a\log R)>1+C_{\rm shell}\log R.
\end{equation}

In the rest of the proof, we split the targets according to the number of zeros $\ell$ (so distance from origin is $d-\ell$) into two cases: $0\leq\ell\leq C_{\rm shell}\log\log d$ and $C_{\rm shell}\log\log d<\ell\leq d$. In the first case, we condition on the process at time $\kappa\log\log d$; in the second case, we condition at time $\delta\ell$.


\begin{figure}[ht]
\centering
\begin{tikzpicture}[x=.82\linewidth,y=1cm,>=stealth,
  every node/.style={font=\footnotesize,inner sep=1pt}]
  \node[anchor=east,align=right,font=\scriptsize] at (.15,.72)
    {near-antipode shells\\[-1pt]
     $(0\le\ell\le C_{\rm shell}\log\log d)$};
  \draw[->] (.18,.72)--(.99,.72);
  \foreach \x in {.18,.33,.78,.97}
    \draw (\x,.65)--(\x,.79);
  \node[below=2pt] at (.18,.72) {$0$};
  \node[above=2pt] at (.33,.72) {$\kappa\log\log d$};
  \node[above=2pt,text=gray] at (.78,.72) {$x_\star d$};
  \node[above=2pt] at (.97,.72) {$T_{A,d}^+$};
  \draw[decorate,decoration={brace,mirror,amplitude=3pt}]
    (.18,.43)--(.33,.43)
    node[midway,below=4pt] {$\kappa\log\log d$};
  \draw[decorate,decoration={brace,mirror,amplitude=3pt}]
    (.33,.43)--(.97,.43)
    node[midway,below=4pt] {$T_{A,d}^+-\kappa\log\log d$};

  \node[anchor=east,align=right,font=\scriptsize] at (.15,-.72)
    {other shells\\[-1pt]
     $(C_{\rm shell}\log\log d<\ell\le d)$};
  \draw[->] (.18,-.72)--(.99,-.72);
  \foreach \x in {.18,.36,.61,.78,.97}
    \draw (\x,-.79)--(\x,-.65);
  \node[below=2pt] at (.18,-.72) {$0$};
  \node[above=2pt] at (.36,-.72) {$\delta\ell$};
  \node[above=2pt] at (.61,-.72) {$x_\star d-\delta\ell$};
  \node[below=2pt,text=gray] at (.78,-.72) {$x_\star d$};
  \node[above=2pt] at (.97,-.72) {$T_{A,d}^+$};
  \draw[decorate,decoration={brace,mirror,amplitude=3pt}]
    (.18,-1.01)--(.36,-1.01)
    node[midway,below=4pt] {$\delta\ell$};
  \draw[decorate,decoration={brace,mirror,amplitude=3pt}]
    (.36,-1.01)--(.61,-1.01)
    node[midway,below=4pt] {$x_\star d-2\delta\ell$};
\end{tikzpicture}
\caption{Split of the targets and the corresponding timelines.}
\label{fig:binary3}
\end{figure}

\par\smallskip\noindent\emph{Step 2: targets near the antipode.}
Let $w>0$ and $K<\infty$ be given by Lemma \ref{lem:martingale2} applied with
$\epsilon/3$.  Decreasing $w$ if necessary, we may also assume that
$\Pp(W<2w)<\epsilon/3$.  Intersecting the event therein 
with the event in Lemma~\ref{lem:binary4}, whose probability is
$1-o(1)$, shows that for all large $d$, with probability at least
$1-2\epsilon/3$, coordinates do not repeat and
\begin{equation}\label{eq:binary16}
 M_R(\kappa\log\log d)\ge w,\qquad
 H_{\kappa\log\log d}\le K,\qquad
 \max_v D_v(\kappa\log\log d)\le a\kappa\log\log d.
\end{equation}
Let $\mathcal E_{{\rm near},d}\in\cF_{\kappa\log\log d}$ denote the
intersection of the event that  \eqref{eq:binary16} holds and the event
that no coordinate is selected twice.
 Thus,
$\mathcal E_{{\rm near},d}\cap\{W\ge2w\}$ has probability at least
$1-\epsilon$ for all large $d$. We condition on time $\kappa\log\log d$ and restrict to the event 
$\mathcal E_{{\rm near},d}$.  On this event, by \eqref{eq:binary16}, for a target with
$0\le\ell\le C_{\rm shell}\log\log d$ zeroes, every particle
$v\in V_{\kappa\log\log d}$ satisfies
\[
 d-d_{\rm H}(X_v(\kappa\log\log d),y)
 \le(C_{\rm shell}+a\kappa)\log\log d.
\]
By \eqref{eq:binary15}, uniformly in these particles and targets,
the logarithm of the expression $e^{-\chi D}R^r$ appearing in
Lemma~\ref{lem:binary2}(i) is at most
\[
\begin{split}
 &-\chi\bigl((\kappa-\chi^{-1})\log\log d-A\bigr)+\bigl(d-d_{\rm H}(X_v(\kappa\log\log d),y)\bigr)\log R\\
 &\quad\le\bigl(1+C_{\rm shell}\log R
 -\kappa(\chi-a\log R)\bigr)\log\log d+\chi A
 \longrightarrow-\infty.
\end{split}
\]
Lemma~\ref{lem:binary2}(i), independence, and
Lemma~\ref{lem:binary3} together show that, conditionally on
the early history, the probability that a target $y$ remains unhit
is at most
\begin{equation}\label{eq:near1}
\begin{split}
 \exp\left(-c e^{-\chi((\kappa-\chi^{-1})\log\log d-A)}
       \sum_{v\in V_{\kappa\log\log d}}
       R^{d-d_{\rm H}(X_v(\kappa\log\log d),y)}\right)
 &\le
 \exp(-cw e^{\chi A}\log d\,
       R^{\ell-2K\sqrt\ell}).
\end{split}
\end{equation}
Here $c>0$ does not depend on $A$, $d$. 
Choose $A$ so large that
\[
 cwe^{\chi A}\inf_{j\ge0}\frac{R^{j-2K\sqrt j}}{j+1}>4.
\]
A conditional union bound using
$\binom d\ell\le d^\ell$ and \eqref{eq:near1} shows that the non-coverage probability for vertices with
$0\le\ell\le C_{\rm shell}\log\log d$ zeroes is at most
\[
 \sum_{0\le\ell\le C_{\rm shell}\log\log d}
 d^\ell d^{-4(\ell+1)}=o(1),
\]
 outside of an event with probability at most $\epsilon$.

\par\smallskip\noindent\emph{Step 3: all remaining targets.}
It remains to cover targets satisfying $\ell>C_{\rm shell}\log\log d$.  For each such $\ell$, we condition on time $\delta\ell$. 
Let $B_\ell$ be the number of particles at time $\delta\ell$ whose ancestral line has made
more than $\ell/2$ mutations.  Define the
$\cF_{\delta\ell}$-measurable event
\begin{equation}\label{eq:population1}
 \mathcal E_{\ell,d}:=
 \left\{|V_{\delta\ell}|\ge we^{\lambda\delta\ell}\right\}
 \cap\left\{B_\ell\le\frac12we^{\lambda\delta\ell}\right\},\qquad C_{\rm shell}\log\log d<\ell\le d.
\end{equation}
Next, we claim that
\begin{equation}\label{eq:far1}
 \Pp\!\left(
 \left(\bigcap_{\substack{m\in\mathbb Z\\
 C_{\rm shell}\log\log d<m\le d}}\mathcal E_{m,d}\right)^{\!\mathrm c}
 \cap\{W\ge2w\}\right)=o(1).
\end{equation}
Indeed, on $\{W\ge2w\}$, almost-sure convergence \eqref{eq:yule1}  gives
$|V_t|\ge we^{\lambda t}$ for all sufficiently large real $t$.  Moreover, the
many-to-one identity \eqref{eq:spine1} gives
\[
 \E[B_\ell]=e^{\lambda\delta\ell}
 \Pp\!\left(\operatorname{Pois}(\delta\ell)>\frac{\ell}{2}\right).
\]
Applying Markov's inequality and the bound \eqref{eq:chernoff1}, we have
\begin{equation}\label{eq:far2}
 \Pp\!\left(B_\ell>\frac12we^{\lambda\delta\ell}\right)
 \le\frac2w\Pp\!\left(\operatorname{Pois}(\delta\ell)>\frac{\ell}{2}\right)
 \le\frac2w\exp\!\left(-\left(
 \frac12\log\frac1{2\delta}-\frac12+\delta\right)\ell\right).
\end{equation}
Because $\delta<1/2$, one has
$\frac12\log\frac1{2\delta}-\frac12+\delta>0$.  The right-hand side of
\eqref{eq:far2} is thus summable in $\ell$.  Combining these bounds
 proves
\eqref{eq:far1}.

On $\mathcal E_{\ell,d}$, each particle $v$ not counted by $B_\ell$
satisfies $d-d_{\rm H}(X_v(\delta\ell),y)\ge\ell/2$ for every target $y$ with
$\ell$ zeroes. Also, there are at
least $|V_{\delta\ell}|-B_\ell\ge we^{\lambda\delta\ell}/2$ such particles by
\eqref{eq:population1}.
Lemma~\ref{lem:binary2}(ii) then gives that on the event $\mathcal E_{\ell,d}$,
\begin{equation}\label{eq:far3}
 \Pp\!\left(y\text{ is unhit by }x_\star d-\delta\ell
       \mid\cF_{\delta\ell}\right)
 \le\exp(-cwe^{\lambda\delta\ell}).
\end{equation}
Note that the time on the left-hand side of \eqref{eq:far3} is earlier than
$T_{A,d}^+$ for every $A\ge0$, as
$x_\star d-\delta\ell\le x_\star d<T_{A,d}^+$.  By
\eqref{eq:shell1} and Lemma~\ref{lem:shell1} below,
\begin{equation*}
 \sum_{C_{\rm shell}\log\log d<\ell\leq d}\binom d\ell
       e^{-cwe^{\lambda\delta\ell}}
 \le\sum_{C_{\rm shell}\log\log d<\ell\leq d}
       \exp\!\left(\ell\log\!\left(\frac{ed}{\ell}\right)
       -cwe^{\lambda\delta\ell}\right)=o(1).
\end{equation*}
In other words, the non-coverage probability for vertices with
$C_{\rm shell}\log\log d<\ell\leq d$ zeroes is at most $o(1)$, outside of an event with probability at most $\epsilon$.

Combining the above two steps and sending $\epsilon\to 0$ completes the proof.
\end{proof}

\section*{Acknowledgments}
The author has supplied the key mathematical arguments to generative AI. During the revision process, GPT 5.5 pro and 5.6 pro were used to prepare figures and simulations, to expand omitted details, to review literature, and to produce neater proofs, particularly the elementary lemmas in the appendices. Every AI-assisted argument has been reviewed and verified independently by the author. The author also gratefully acknowledges support from the Jump Trading Fellowship.

\phantomsection

\clearpage
\appendix
\section{First- and second-moment computations}
\label{app:moments1}

This appendix proves Lemmas~\ref{lem:endpoint1}, \ref{lem:cluster1},
\ref{lem:cluster2}, and~\ref{lem:binary2} involving first-
and second-moment calculations.  Recall the notation in \eqref{eq:rate1},
\eqref{eq:constants1}, \eqref{eq:kernel1}, \eqref{eq:arrival1}, \eqref{eq:predecessor1}, and \eqref{eq:R}.
The proofs of Lemmas~\ref{lem:endpoint1} and~\ref{lem:cluster1} apply to
fixed $b\ge2$, the proof of Lemma~\ref{lem:cluster2} does not depend on
$b$, and the proof of Lemma~\ref{lem:binary2} takes $b=2$.

\subsection{Proof of Lemma~\ref{lem:endpoint1}}\label{app:endpoint1}

\begin{lemma}\label{lem:endpoint2}
Fix $C_{\rm ov}<\infty$ and $\vartheta>R$.  There exists
$\epsilon=\epsilon(\vartheta)\in(0,x_\star/2)$ such that, for all
 $d$ large enough, uniformly for
$|s-x_\star d|\le\epsilon d$ and $\ell\le C_{\rm ov}\log d$,
\[
 \left(\frac{A_s}{B_s}\right)^\ell\le\vartheta^\ell,
 \qquad H_{d,\ell}(s)\asymp1,
 \qquad |H_{d,\ell}'(s)|\ll d^{-1},
\]
and the integrand in \eqref{eq:arrival1} satisfies
\begin{align}
     \frac{\mathrm d}{\mathrm d s}\log\!\left(
 e^{\lambda s}b^{-d}B_s^d
 \left(\frac{A_s}{B_s}\right)^\ell H_{d,\ell}(s)
 \right)\ge\frac\lambda2.\label{eq:growth1}
\end{align}
\end{lemma}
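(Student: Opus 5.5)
The plan is to work in the rescaled variable $q=e^{-\beta s/d}$, so that $A_s=1+(b-1)q$ and $B_s=1-q$. For $|s-x_\star d|\le\epsilon d$, the variable $q$ ranges over the interval $[e^{-\beta(x_\star+\epsilon)},e^{-\beta(x_\star-\epsilon)}]$, which is compact, contains $q_\star$, and is contained in $(0,1)$. All four claims then reduce to continuity in $q$ near $q_\star$, together with the fact that $d/ds=-(\beta q/d)\,d/dq$ contributes a factor $1/d$. Accordingly, $\epsilon$ will be chosen small, depending only on $\vartheta$, $b$, $\lambda$.

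\emph{First claim.} The ratio $A_s/B_s=(1+(b-1)q)/(1-q)$ is continuous and increasing in $q$, and it equals $R$ at $q=q_\star$. Since $\vartheta>R$, choose $\epsilon$ so small that $(1+(b-1)q)/(1-q)\le\vartheta$ on the $q$-interval. Raising to the power $\ell\ge0$ gives $(A_s/B_s)^\ell\le\vartheta^\ell$.

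\emph{Second and third claims.} In \eqref{eq:predecessor1}, the weights are $\ell/d=O(\log d/d)\to0$ and $(d-\ell)/d\in[1/2,1]$ for large $d$. The functions $B/A$ and $A/B+b-2$ are bounded above and below by positive constants on the compact $q$-interval. Therefore $H_{d,\ell}(s)\asymp1$; the bound from below comes from the second term, which is at least $\tfrac{1}{2(b-1)}(A/B+b-2)\ge c>0$. For the derivative, each of $B/A$ and $A/B$ is a smooth function of $q$ with bounded derivative on the interval. Since $dq/ds=-\beta q/d$, we get $|H_{d,\ell}'(s)|\ll d^{-1}$ uniformly in $\ell\le d$.

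\emph{Growth bound \eqref{eq:growth1}.} Differentiate the logarithm term by term:
\[
\lambda+d\,\frac{B_s'}{B_s}+\ell\,\frac{d}{ds}\log\frac{A_s}{B_s}+\frac{H_{d,\ell}'(s)}{H_{d,\ell}(s)}.
\]
The main term is $d\,B_s'/B_s=\beta q/(1-q)>0$, which is bounded below uniformly. This already exceeds $\lambda/2$ comfortably, since $\lambda+\beta q/(1-q)\ge\lambda$. The term $\ell\,(d/ds)\log(A_s/B_s)$ equals $\ell$ times a quantity of order $1/d$, hence it is $O(\log d/d)$ in absolute value; note it is in fact nonnegative, being $-(\beta q/d)$ times the derivative of a decreasing function of $q$... either way it is negligible. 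The last term is $O(1/d)$ by the previous step. Hence the total is at least $\lambda-o(1)\ge\lambda/2$ for large $d$.

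The only step needing care is the choice of $\epsilon$: it must satisfy $\epsilon<x_\star/2$ and keep the ratio below $\vartheta$. Since everything else holds for any fixed $\epsilon<x_\star$ once $d$ is large, I do not expect a genuine obstacle here.
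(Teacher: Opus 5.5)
Your proof is correct and follows essentially the same route as the paper: choose $\epsilon$ by continuity of $(1+(b-1)q)/(1-q)$ at $q_\star$, use two-sided bounds on $A_s,B_s$ over the window together with $\mathrm{d}q/\mathrm{d}s=-\beta q/d$ to get $H_{d,\ell}\asymp1$ and $|H_{d,\ell}'|\ll d^{-1}$, and then bound the log-derivative below by $\lambda-C\ell/d-C/d$. One small slip is harmless: the term $\ell\,\frac{\mathrm{d}}{\mathrm{d}s}\log(A_s/B_s)$ is actually negative, not nonnegative, because $A/B$ is increasing in $q$ (as you yourself used in the first claim) while $q$ decreases in $s$; since you only need its $O(\log d/d)$ absolute bound, the conclusion stands.
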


\begin{proof}
First, observe that
\begin{equation}\label{eq:derivative1}
 \frac{\mathrm d}{\mathrm d s}\log\!\left(e^{\lambda s}b^{-d}B_s^d\right)
 =\lambda+\frac{\beta e^{-\beta s/d}}{1-e^{-\beta s/d}}>0.
\end{equation}
Next,  for $|s-x_\star d|=O(\log d)$,\footnote{The same observation will be used in both parts of
Lemma~\ref{lem:endpoint1}.}
\begin{equation}\label{eq:endpoint3}
 e^{\lambda s}b^{-d}B_s^d
 =\exp\!\left(d\Phi\!\left(\frac{s}{d}\right)\right)
 =\exp\!\left(\chi(s-x_\star d)
 +O\!\left(\frac{(s-x_\star d)^2}{d}\right)\right)\asymp \exp\left(\chi(s-x_\star d)
 \right).
\end{equation}
Also, $A_s/B_s=R+O(|s-x_\star d|/d)$.  Fix $\vartheta>R$ as in the
statement.  Since $A_{x_\star d}/B_{x_\star d}=R$, continuity allows us
to choose $0<\epsilon=\epsilon(\vartheta)<x_\star/2$ such that
$$
 \sup_{|x-x_\star|\le\epsilon}
 \frac{1+(b-1)e^{-\beta x}}{1-e^{-\beta x}}<\vartheta.
$$
Consequently, for all large $d$,
\begin{equation}\label{eq:endpoint4}
 \left(\frac{A_s}{B_s}\right)^\ell\le\vartheta^\ell
 \qquad
 \text{ for }~~|s-x_\star d|\le \epsilon d~~\text{ and }~~ \ell\le C_{\rm ov}\log d.
\end{equation}
For $\epsilon$ small, we have that, on
$|s-x_\star d|\le\epsilon d$, the quantities $e^{-\beta s/d}$, $A_s$, and $B_s$ are
bounded above and below by positive constants.  Direct differentiation
gives
\begin{equation}\label{eq:endpoint5}
 \left|\frac{\mathrm d}{\mathrm d s}\log\!\left(\frac{A_s}{B_s}\right)\right|
 =\left|-\frac{\beta e^{-\beta s/d}}{d}
 \left(\frac{b-1}{A_s}+\frac{1}{B_s}\right)\right|\ll \frac{1}{d}.
\end{equation}
 On the same interval,
$A_s/B_s$ and $B_s/A_s$ are bounded above and below by positive
constants.  Moreover, $d-\ell\ge d/2$ for all sufficiently large $d$, and
direct differentiation of \eqref{eq:predecessor1} gives
\[
 \left|H_{d,\ell}'(s)\right|
 = \left|\frac\ell d\left(\frac{B_s}{A_s}\right)'
 +\frac{d-\ell}{d(b-1)}\left(\frac{A_s}{B_s}\right)'\right|\ll \frac{1}{d}.
\]
Therefore,
\begin{equation}\label{eq:endpoint6}
 H_{d,\ell}(s)\asymp1,
 \qquad |H_{d,\ell}'(s)|\ll d^{-1}.
\end{equation}
Hence, $|\frac{\mathrm d}{\mathrm d s}\log H_{d,\ell}(s)|\ll d^{-1}$.  Writing the
integrand in \eqref{eq:arrival1} as
$e^{\lambda s}b^{-d}B_s^d(A_s/B_s)^\ell H_{d,\ell}(s)$, equations
\eqref{eq:derivative1}, \eqref{eq:endpoint5},
and~\eqref{eq:endpoint6} give
$$
 \frac{\mathrm d}{\mathrm d s}\log\!\left(
 e^{\lambda s}b^{-d}B_s^d
 \left(\frac{A_s}{B_s}\right)^\ell H_{d,\ell}(s)
 \right)
 \ge \lambda-\frac{C\ell}{d}-\frac Cd
 \ge\frac\lambda2
$$
for all large $d$, uniformly when
$|s-x_\star d|\le\epsilon d$ and
$\ell\le C_{\rm ov}\log d$.  This proves the lemma.
\end{proof}

\begin{proof}[Proof of Lemma~\ref{lem:endpoint1}]
\smallskip\noindent\emph{Part~\textup{(i)}.}
Fix $C_{\rm ov},C_D<\infty$ and $\vartheta>R$, and take $\epsilon$ from
Lemma~\ref{lem:endpoint2}.  Decrease $\epsilon$, if necessary,
so that $e^{\lambda\epsilon}\beta\epsilon/b<1$.  We split the domain of integration 
in \eqref{eq:arrival1} into the three intervals
\[
 [0,\epsilon d],\qquad
 [\epsilon d,(x_\star-\epsilon)d],\qquad
 [(x_\star-\epsilon)d,x_\star d-D].
\]
We estimate them in this order.

\smallskip\noindent\emph{Case 1: $0\le s\le\epsilon d$.}
For $0<s\le\epsilon d$, \eqref{eq:predecessor1} and
$\ell\le C_{\rm ov}\log d$ give
$H_{d,\ell}(s)\ll A_s/B_s$ for all large $d$.  Using $A_s\le b$ and
$B_s\le\beta s/d\le\beta\epsilon$ gives
\[
\begin{split}
 e^{\lambda s}b^{-d}A_s^\ell B_s^{d-\ell}H_{d,\ell}(s)
 &\ll e^{\lambda\epsilon d}b^{-d}
 A_s^{\ell+1}B_s^{d-\ell-1}\\
 &\ll\left(\frac{e^{\lambda\epsilon}\beta\epsilon}{b}\right)^d
       \left(\frac{b}{\beta\epsilon}\right)^{\ell+1}.
\end{split}
\]
The second factor $(b/(\beta\epsilon))^{\ell+1}$ is at most a
fixed power of $d$ because $\ell\le C_{\rm ov}\log d$.  Thus,
multiplication by the interval length $\epsilon d$ leaves an exponentially
small bound:
\[
 \int_0^{\epsilon d}e^{\lambda s}b^{-d}A_s^\ell
 B_s^{d-\ell}H_{d,\ell}(s)\dd s
 \ll \epsilon d
 \left(\frac{e^{\lambda\epsilon}\beta\epsilon}{b}\right)^d
 \left(\frac{b}{\beta\epsilon}\right)^{C_{\rm ov}\log d+1}
 \le e^{-cd}
\]
for some $c>0$ and all large $d$.

\smallskip\noindent\emph{Case 2: $\epsilon d\le s\le
(x_\star-\epsilon)d$.}
Since $\Phi$ is strictly increasing and $\Phi(x_\star)=0$, for this fixed
$\epsilon>0$ there is $c>0$ such that
\[
 \sup_{\epsilon d\le s\le(x_\star-\epsilon)d}
 e^{\lambda s}b^{-d}B_s^d\le e^{-cd}.
\]
For $s\in[\epsilon d,(x_\star-\epsilon)d]$, $A_s/B_s\asymp1$ and $H_{d,\ell}(s)\ll1$.  Since
$\ell\le C_{\rm ov}\log d$, the contribution of this interval is
\[
 \int_{\epsilon d}^{(x_\star-\epsilon)d}
 e^{\lambda s}b^{-d}A_s^\ell B_s^{d-\ell}
 H_{d,\ell}(s)\dd s
 \ll d^{O(1)}e^{-cd}\le e^{-cd/2}
\]
for some $c>0$ and all large $d$.

\smallskip\noindent\emph{Case 3: $(x_\star-\epsilon)d\le s\le
x_\star d-D$.}
Integrating \eqref{eq:growth1} of Lemma~\ref{lem:endpoint2}
from $s$ to $x_\star d-D$ shows that
\begin{align*}
 &e^{\lambda s}b^{-d}B_s^d
 \left(\frac{A_s}{B_s}\right)^\ell H_{d,\ell}(s)\\
 &\quad\ll e^{\lambda(x_\star d-D)}b^{-d}B_{x_\star d-D}^d
 \left(\frac{A_{x_\star d-D}}{B_{x_\star d-D}}\right)^\ell
 H_{d,\ell}(x_\star d-D)
 e^{-\lambda(x_\star d-D-s)/2}.
\end{align*}
Consequently,
\begin{equation*}
\begin{split}
 &\int_{(x_\star-\epsilon)d}^{x_\star d-D}
 e^{\lambda s}b^{-d}A_s^\ell B_s^{d-\ell}H_{d,\ell}(s)\dd s\\
 &\quad\ll
 e^{\lambda(x_\star d-D)}b^{-d}
 A_{x_\star d-D}^\ell B_{x_\star d-D}^{d-\ell}
 H_{d,\ell}(x_\star d-D).
\end{split}
\end{equation*}
At $s=x_\star d-D$, equations \eqref{eq:endpoint3},
\eqref{eq:endpoint4}, and~\eqref{eq:endpoint6} together give the bound
\[
\begin{split}
 &e^{\lambda(x_\star d-D)}b^{-d}
 A_{x_\star d-D}^\ell B_{x_\star d-D}^{d-\ell}
 H_{d,\ell}(x_\star d-D)
 \ll \vartheta^\ell e^{-\chi D}.
\end{split}
\]
For each $c>0$, $ e^{-cd/2}=o(\vartheta^\ell e^{-\chi D})$. Therefore, combining the above three cases yields the proof of (i). 

\smallskip\noindent\emph{Part~\textup{(ii)}.}
Let $\epsilon$ be as in Lemma~\ref{lem:endpoint2}, applied with $C_{\rm ov}=1$ and
$\vartheta=R+1$, and decrease the resulting $\epsilon$, if necessary, as
in Cases 1 and 2 above.   Since $\ell$ is fixed, $\ell\le\log d$ for all large
$d$, so the lemma applies.
For \eqref{eq:endpoint2}, write
$s=x_\star d+S_d-v$. Similarly to \eqref{eq:endpoint3}, for each fixed $M<\infty$, uniformly for
$0\le v\le M$,
\begin{equation}\label{eq:endpoint8}
 d\Phi\!\left(\frac{s}{d}\right)=\chi(S_d-v)
 +O\!\left(\frac{(|S_d|+M)^2}{d}\right)
           =\chi(S_d-v)+o(1),
\qquad
 \frac{A_s}{B_s}=R+o(1).
\end{equation}
Since $\ell$ is fixed, \eqref{eq:predecessor1} and
\eqref{eq:endpoint8} give, uniformly for
$0\le v\le M$ as $d\to\infty$,
\begin{equation}\label{eq:predecessor2}
\begin{split}
 \left(\frac{A_s}{B_s}\right)^\ell H_{d,\ell}(s)
 &\to R^\ell\frac{R+b-2}{b-1}\\
 &=
 \frac{1+(b-1)q_\star+(b-2)(1-q_\star)}
 {(b-1)(1-q_\star)}\,R^\ell .
\end{split}
\end{equation}
Indeed, the first term in \eqref{eq:predecessor1} vanishes because
$\ell/d\to0$, while $(d-\ell)/d\to1$.  For the $\epsilon$ fixed above,
\eqref{eq:endpoint8} and
\eqref{eq:predecessor2} give, for each fixed $v\ge0$, as $d\to\infty$,
\[
\begin{split}
 &e^{-\chi S_d}e^{\lambda s}b^{-d}
 A_s^\ell B_s^{d-\ell}H_{d,\ell}(s)\xrightarrow[]{}
 e^{-\chi v}R^\ell\frac{R+b-2}{b-1},
 \qquad s=x_\star d+S_d-v.
\end{split}
\]
At $v=0$, \eqref{eq:endpoint8}
and~\eqref{eq:predecessor2} show that the normalized
integrand is $\ll1$.  By \eqref{eq:growth1} of 
Lemma~\ref{lem:endpoint2},
\[
\begin{split}
 &e^{-\chi S_d}e^{\lambda s}b^{-d}
 A_s^\ell B_s^{d-\ell}H_{d,\ell}(s)\\
 &\quad\le
 e^{-\chi S_d}e^{\lambda(x_\star d+S_d)}b^{-d}
 A_{x_\star d+S_d}^\ell B_{x_\star d+S_d}^{d-\ell}
 H_{d,\ell}(x_\star d+S_d)e^{-\lambda v/2}\\
 &\quad\ll e^{-\lambda v/2},
 \qquad s=x_\star d+S_d-v,
\end{split}
\]
whenever $0\le v\le \epsilon d+S_d$.  Integrating this bound and using the
estimates proved in Cases 1 and 2 on the earlier intervals
$[0,\epsilon d]$ and $[\epsilon d,(x_\star-\epsilon)d]$ shows that
\[
\begin{split}
 e^{-\chi S_d}\int_0^{x_\star d+S_d-M}
 e^{\lambda s}b^{-d}A_s^\ell B_s^{d-\ell}
 H_{d,\ell}(s)\dd s
 &\ll \int_M^\infty e^{-\lambda v/2}\dd v
      +e^{-cd+\chi|S_d|}\\
 &\ll e^{-\lambda M/2}+e^{-cd/2}
\end{split}
\]
for all large $d$, uniformly for $S_d=o(d^{1/2})$.  
On each fixed interval
$0\le v\le M$, \eqref{eq:endpoint8} and
\eqref{eq:predecessor2} allow us to apply dominated
convergence.
Letting first $d\to\infty$ and then $M\to\infty$ gives the factor
$\int_0^\infty e^{-\chi v}\dd v=\chi^{-1}$ and proves
\eqref{eq:endpoint2}.
\end{proof}

\subsection{Proof of Lemma~\ref{lem:cluster1}}\label{app:cluster1}

For $0<q\le p\le1$, define the functions
\begin{align}
 K_+(p,q)&=1+
 \frac{(b-1)q^2(1-p)(1+(b-1)p)}
 {p^2(1+(b-1)q)^2},\label{eq:kernel2}\\
 K_-(p,q)&=1+
 \frac{q^2(1-p)(b-1+p)}{p^2(1-q)^2}.
 \label{eq:kernel3}
\end{align}
Recall $\Phi(x_\star)=0$, which is equivalent to
\begin{equation}\label{eq:root1}
 q_\star^{\lambda/\beta}=\frac{1-q_\star}{b}.
\end{equation}
 For $0\le u\le1$, define $p=q_\star^u$ and 
\begin{equation}\label{eq:convexity1}
 f_\pm(u)=\frac{\lambda}{\beta}\log(q_\star^u)
          +\log K_\pm(q_\star^u,q_\star).
\end{equation}

The following elementary estimate is useful in the
second-moment calculation in the proof of Lemma~\ref{lem:cluster1}. 

\begin{lemma}\label{lem:cluster3}
The functions $f_+$ and $f_-$ are strictly convex, satisfying
\begin{equation}\label{eq:convexity2}
 f_+(0)=f_-(0)=0,\qquad f_-(1)=0,\qquad
 f_+(1)=-\log R.
\end{equation}
Moreover, there is a constant $c>0$ such that
\begin{align*}
 f_+(u)&\le-(\log R)u, \qquad0\le u\le1;\\
 f_-(u)&\le-cu, \qquad~\,\qquad0\le u\le\tfrac12;\\
 f_-(u)&\le-c(1-u), \qquad\tfrac12\le u\le1.
\end{align*}
\end{lemma}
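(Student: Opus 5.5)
The plan is to compute the endpoint values \eqref{eq:convexity2} directly, to prove strict convexity by switching to the variable $t=u\log(1/q_\star)$, and to read off the linear bounds from convexity by chord comparisons.

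\emph{Endpoint values.} At $u=0$ we have $p=1$, so $K_\pm(1,q_\star)=1$ and hence $f_\pm(0)=0$. At $u=1$ we have $p=q_\star$, and the kernels simplify:
\[
K_+(q_\star,q_\star)=1+\frac{(b-1)(1-q_\star)}{1+(b-1)q_\star}=\frac{b}{1+(b-1)q_\star},
\qquad
K_-(q_\star,q_\star)=1+\frac{b-1+q_\star}{1-q_\star}=\frac{b}{1-q_\star}.
\]
By \eqref{eq:root1}, $\frac{\lambda}{\beta}\log q_\star=\log\frac{1-q_\star}{b}$. Adding this to the logarithms above gives $f_-(1)=0$ and
\[
f_+(1)=\log\frac{1-q_\star}{1+(b-1)q_\star}=-\log R .
\]

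\emph{Strict convexity.} The first term $\frac{\lambda}{\beta}u\log q_\star$ is linear in $u$. Since $t=u\log(1/q_\star)$ is an affine change of variable, it suffices to show that $t\mapsto\log K_\pm(e^{-t},q_\star)$ is strictly convex on $t\ge0$. Put $x=e^t\ge1$. Multiplying out gives
\[
\frac{(1-p)(1+(b-1)p)}{p^2}=x^2+(b-2)x-(b-1),
\qquad
\frac{(1-p)(b-1+p)}{p^2}=(b-1)x^2+(2-b)x-1 .
\]
So each $K_\pm$ has the form $\alpha+\beta' x+\gamma x^2$, with coefficients depending only on $q_\star$ and $b$.

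For a function $\log(\alpha+\beta'x+\gamma x^2)$ of $t$, a short computation shows that the numerator of the second derivative, after dividing by $x$, equals
\[
\alpha\beta'+4\alpha\gamma x+\beta'\gamma x^2 .
\]
\begin{itemize}
\item For $K_+$, set $c_+=(b-1)q_\star^2/(1+(b-1)q_\star)^2$. Then $\alpha=1-(b-1)c_+=1-\bigl(\tfrac{(b-1)q_\star}{1+(b-1)q_\star}\bigr)^2>0$, $\beta'=(b-2)c_+\ge0$ and $\gamma=c_+>0$. Hence $\log K_+$ is a log-sum-exp of nonnegative combinations of $1,e^t,e^{2t}$ with at least two nonzero terms, and is strictly convex.
\item For $K_-$, set $c=q_\star^2/(1-q_\star)^2$. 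Then $\alpha=1-c$, $\beta'=(2-b)c\le0$ and $\gamma=(b-1)c$. Here positivity is not automatic. I would verify $\alpha\beta'+4\alpha\gamma x+\beta'\gamma x^2>0$ on $x\in[1,1/q_\star]$ using the specific value of $q_\star$ from \eqref{eq:root1}.
\end{itemize}
This sign check for $f_-$ is the step I expect to be the main obstacle. If a direct check fails, my fallback is to use the factor $(1-p)$ in the numerator: write $\log K_-=\log\bigl(1+c(1-p)(b-1+p)/p^2\bigr)$ and argue convexity on the actual range of $p$.

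\emph{Linear bounds.} For $f_+$, convexity with $f_+(0)=0$ and $f_+(1)=-\log R$ gives $f_+(u)\le(1-u)f_+(0)+uf_+(1)=-(\log R)u$ on $[0,1]$. For $f_-$, strict convexity with $f_-(0)=f_-(1)=0$ forces $f_-(1/2)<0$. The chord on $[0,1/2]$ then gives $f_-(u)\le 2uf_-(1/2)$, and the chord on $[1/2,1]$ gives $f_-(u)\le 2(1-u)f_-(1/2)$. Taking $c=-2f_-(1/2)>0$ yields both bounds for $f_-$.
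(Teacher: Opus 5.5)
Everything you actually carry out is correct and follows the paper's route. This includes the endpoint values, the passage to $t=\log(1/p)$ with $x=e^t=1/p\in[1,1/q_\star]$, the numerator $\alpha\beta'+4\alpha\gamma x+\beta'\gamma x^2$, the log-sum-exp argument for $K_+$, and the chord bounds.

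The gap is the step you flagged and did not do. For $K_-$ you must show
\[
H(x)=-(b-1)(b-2)\delta_-^2x^2+4(b-1)\delta_-(1-\delta_-)x-(b-2)\delta_-(1-\delta_-)>0,\qquad 1\le x\le 1/q_\star,
\]
where $\delta_-=q_\star^2/(1-q_\star)^2$ is your $c$. This is the only nontrivial content of the lemma. Without it you have neither strict convexity of $f_-$ nor $f_-(1/2)<0$, so the bounds $f_-(u)\le-cu$ and $f_-(u)\le-c(1-u)$ are also unproved. There is no single ``specific value'' of $q_\star$ to plug in, since $q_\star$ moves with $\lambda\in(0,1)$; you need an estimate uniform in $\lambda$. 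You need one even for $b=2$, where $H(x)=4\delta_-(1-\delta_-)x$ is positive only if $\delta_-<1$, i.e.\ $q_\star<1/2$, which you did not check.

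Two ideas close the gap. First, \eqref{eq:root1} and $0<\lambda/\beta<1$ give $q_\star<q_\star^{\lambda/\beta}=(1-q_\star)/b$. Hence $q_\star<1/(b+1)<1/b$ and $\delta_-<1/(b-1)^2\le1$. Second, for $b>2$ the leading coefficient of $H$ is negative, so $H$ is concave and it suffices to check the endpoints. At the left endpoint, $H(1)=\delta_-(3b-2-b^2\delta_-)>0$ because $b^2\delta_-<b^2/(b-1)^2\le4\le3b-2$. At the right endpoint,
\[
H(1/q_\star)=\frac{q_\star^2}{(1-q_\star)^4}\Bigl(\frac{4(b-1)(1-2q_\star)}{q_\star}-(b-2)(b-2q_\star)\Bigr).
\]
The bracket is decreasing in $q_\star$, since its derivative is $-4(b-1)/q_\star^2+2(b-2)<0$. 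It therefore exceeds its value $(b-2)(3b-4+2/b)>0$ at $q_\star=1/b$. With this inserted, your argument is complete and coincides with the paper's proof.
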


\begin{proof}
The identities in  \eqref{eq:convexity2} follow from \eqref{eq:root1} and direct computation. 
For $p=q_\star^u$, let
\[
 \delta_+=\frac{(b-1)q_\star^2}
 {(1+(b-1)q_\star)^2},
\]
so \eqref{eq:kernel2} becomes
\[
 K_+(p,q_\star)=\delta_+p^{-2}+(b-2)\delta_+p^{-1}
 +1-(b-1)\delta_+.
\]
A direct differentiation then gives
\[
\begin{aligned}
 \frac{\mathrm d^2}{\mathrm d(\log(p^{-1}))^2}\log K_+(p,q_\star)
 &=\frac{p^{-1}}{K_+(p,q_\star)^2}\biggl(
 (b-2)\delta_+^2p^{-2}
 +4\delta_+\bigl(1-(b-1)\delta_+\bigr)p^{-1}\\
 &\hspace{35mm}
 +(b-2)\delta_+\bigl(1-(b-1)\delta_+\bigr)
 \biggr)>0,
\end{aligned}
\]
where we have used $(b-1)\delta_+=((b-1)q_\star/(1+(b-1)q_\star))^2<1$. 
Since $(\lambda/\beta)\log p$ is affine in $\log(p^{-1})$,
\eqref{eq:convexity1} shows that $f_+$ is strictly convex.

For $K_-$, put $\delta_-=q_\star^2/(1-q_\star)^2$.
Since $0<q_\star<1$ and $0<\lambda/\beta<1$, 
\eqref{eq:root1} gives
\[
 q_\star<q_\star^{\lambda/\beta}
 =\frac{1-q_\star}{b},
 \qquad q_\star<\frac1{b+1}<\frac1b.
\]
Consequently, $0<\delta_-<1$.
Then \eqref{eq:kernel3} becomes
$$K_-(p,q_\star)=(b-1)\delta_-p^{-2}-(b-2)\delta_-p^{-1}+1-\delta_-.$$  
A direct differentiation yields
\[
 \frac{\mathrm d^2}{\mathrm d(\log(p^{-1}))^2}\log K_-(p,q_\star)
 =\frac{p^{-1}H(p^{-1})}{K_-(p,q_\star)^2},
\]
where
\[
 H(v)=-(b-1)(b-2)\delta_-^2v^2
      +4(b-1)\delta_-(1-\delta_-)v
      -(b-2)\delta_-(1-\delta_-).
\]
If $b=2$, then $H(v)=4\delta_-(1-\delta_-)v>0$.  If $b>2$, then $H$ is a
concave quadratic function on $[1,q_\star^{-1}]$, with endpoint values
\begin{align*}
 H(1)&=\delta_-(3b-2-b^2\delta_-),\\
 H(q_\star^{-1})
 &=\frac{q_\star^2}{(1-q_\star)^4}
 \left(\frac{4(b-1)(1-2q_\star)}{q_\star}
 -(b-2)(b-2q_\star)\right).
\end{align*}
To see $H(1)>0$, note that $q_\star<1/b$ gives
$\delta_-<1/(b-1)^2$, while $b^2/(b-1)^2\le4\le3b-2$.  To see $H(q_\star^{-1})>0$,  a direct differentiation and $q_\star<1/b$ give
$$\frac{\mathrm d}{\mathrm d q}
 \left(\frac{4(b-1)(1-2q)}q-(b-2)(b-2q)\right)
 =-\frac{4(b-1)}{q^2}+2(b-2)<0,$$
 so
\[
\begin{split}
&\frac{4(b-1)(1-2q_\star)}{q_\star}
 -(b-2)(b-2q_\star)\\
 &\qquad>
 4b(b-1)\left(1-\frac2b\right)
 -(b-2)\left(b-\frac2b\right)
 =(b-2)\left(3b-4+\frac2b\right)>0.
\end{split}
\]
By concavity, $H>0$, so $f_-$ is strictly
convex.

Since $f_-$ is strictly convex and $f_-(0)=f_-(1)=0$, one has
$f_-(1/2)<0$.  Fix a constant $c>0$ with
$c\le-2f_-(1/2)$. Applying the definition of convexity on $[0,1]$ and then on each half
of that interval gives the three asserted bounds.
\end{proof}

\begin{proof}[Proof of Lemma~\ref{lem:cluster1}]
We follow the second-moment method argument in \cite[Proposition~24]{BZ}. 
Recall that $P_t$ is the transition kernel of the underlying walk and that
$N_y(t)$ counts the particles at $y$ at time $t$, where
\begin{equation*}
 \E_x[N_y(t)]=e^{\lambda t}P_t(x,y).
\end{equation*}

\smallskip\noindent\emph{Step 1:  second moment computation.}
Every ordered pair of distinct particles at $y$ at time $t$ has a unique
latest common ancestor $z$ at time $r$. The conditional expectation of the number of ordered pairs at $y$ is
$2(e^{\lambda(t-r)}P_{t-r}(z,y))^2$.  The expected number of particles at
$z$ at time $r$ is $e^{\lambda r}P_r(x,z)$.  Integrating over branching
events, whose rate is $\lambda$ per particle, gives
\begin{equation*}
\begin{split}
 \E_x[N_y(t)(N_y(t)-1)]
 &=\int_0^t\sum_{z\in \{0,1,\ldots,b-1\}^d} \lambda e^{\lambda r}P_r(x,z)
     \,2\bigl(e^{\lambda(t-r)}P_{t-r}(z,y)\bigr)^2\dd r\\
 &=2\lambda e^{2\lambda t}\int_0^t e^{-\lambda r}
   \sum_{z\in \{0,1,\ldots,b-1\}^d} P_r(x,z)P_{t-r}(z,y)^2\dd r.
\end{split}
\end{equation*}
This is also the continuous-time $k=2$ many-to-two identity
\cite{HR}.  Thus,
\begin{equation}\label{eq:cluster5}
 \frac{\E_x[N_y(t)^2]}{\E_x[N_y(t)]^2}
 =\frac1{\E_x[N_y(t)]}+2\lambda\int_0^t e^{-\lambda r}
 \frac{\sum_{z\in \{0,1,\ldots,b-1\}^d}P_r(x,z)P_{t-r}(z,y)^2}{P_t(x,y)^2}\dd r.
\end{equation}

\smallskip\noindent\emph{Step 2: factorization over coordinates.}
Put $p=e^{-\beta r/d}$ and $q=e^{-\beta t/d}$, and suppose that
$\alpha=d_{\rm H}(x,y)/d$.  The fraction in
\eqref{eq:cluster5} factors over the coordinates
$i\in\{1,\dots,d\}$.  Write
$a(v)=(1+(b-1)v)/b$ and $c(v)=(1-v)/b$. There are two cases.
\begin{itemize}
    \item If $x_i=y_i$, then $z_i=x_i$ contributes $a(p)a(q/p)^2$, while each of
the other $b-1$ choices for $z_i$ contributes $c(p)c(q/p)^2$.  After
division by the denominator $a(q)^2$, the factor becomes
$$
 \frac{a(p)a(\frac{q}{p})^2+(b-1)c(p)c(\frac{q}{p})^2}{a(q)^2}=K_+(p,q).
$$
\item If $x_i\ne y_i$, then $z_i=x_i$, $z_i=y_i$, or
$z_i\notin\{x_i,y_i\}$.  These cases contribute respectively
$a(p)c(q/p)^2$, $c(p)a(q/p)^2$, and
$(b-2)c(p)c(q/p)^2$.  After division by $c(q)^2$, the normalized factor is
$$
 \frac{a(p)c(\frac{q}{p})^2+c(p)a(\frac{q}{p})^2
 +(b-2)c(p)c(\frac{q}{p})^2}{c(q)^2}=K_-(p,q).
$$
\end{itemize}
Since $e^{-\lambda r}=p^{(\lambda/\beta)d}$,
\eqref{eq:cluster5} becomes
\begin{equation}\label{eq:cluster6}
 \frac{\E_x[N_y(t)^2]}{\E_x[N_y(t)]^2}
 =\frac1{\E_x[N_y(t)]}+2\lambda\int_0^t
   \exp(dG_\alpha(p,q))\dd r,
\end{equation}
where
$$
 G_\alpha(p,q)=\frac{\lambda}{\beta}\log p
 +(1-\alpha)\log K_+(p,q)+\alpha\log K_-(p,q).
$$

\smallskip\noindent\emph{Step 3: uniform integral bound.}
We now set $t=x_\star d$ and $q=q_\star$.  For $0\le u\le1$, let
$p=q_\star^u$ and let $f_\pm$ be as in
Lemma~\ref{lem:cluster3}.  Thus,
\begin{equation*}
 G_\alpha(q_\star^u,q_\star)
 =(1-\alpha)f_+(u)+\alpha f_-(u).
\end{equation*}
Lemma~\ref{lem:cluster3} gives a constant $c>0$ such that uniformly in
$\alpha\in[0,1]$,
$$
 G_\alpha(q_\star^u,q_\star)\le
 \begin{cases}
  -\min\{\log R,c\}u,&0\le u\le\tfrac12;\\
  -\frac{c}{2}(1-u),&\tfrac12\le u\le1,\ \alpha\ge\tfrac12;\\
  -\frac{\log R}{4},&\tfrac12\le u\le1,\ \alpha<\tfrac12.
 \end{cases}
$$
Using $u=r/(x_\star d)$, the corresponding contributions to the integral in \eqref{eq:cluster6} 
are at most
\begin{align*}
 x_\star d\int_0^{1/2}e^{-d\min\{\log R,c\}u}\dd u
 &\le\frac{x_\star}{\min\{\log R,c\}},\\
 x_\star d\int_{1/2}^1e^{-dc(1-u)/2}\dd u
 &\le\frac{2x_\star}{c},\\
 x_\star d\int_{1/2}^1e^{-d\log R/4}\dd u
 &\le\frac{x_\star d}{2}e^{-d\log R/4}.
\end{align*}
Since $d e^{-d\log R/4}\ll1$, these three bounds give
\begin{equation}\label{eq:cluster7}
 \sup_{0\le\alpha\le1}
 \int_0^{x_\star d}\exp(dG_\alpha(p,q_\star))\dd r\ll1.
\end{equation}

\smallskip\noindent\emph{Step 4: Paley--Zygmund.}
At $t=x_\star d$, \eqref{eq:kernel1} and
\eqref{eq:root1} give exactly
$$
 \E_x[N_y(x_\star d)]=e^{\lambda x_\star d}b^{-d}
       (1+(b-1)q_\star)^{(1-\alpha)d}(1-q_\star)^{\alpha d}
     =R^{(1-\alpha)d}\ge1.
$$
Together, equations \eqref{eq:cluster6} and
\eqref{eq:cluster7} imply
\[
\begin{split}
 \frac{\E_x[N_y(x_\star d)^2]}
      {(\E_x[N_y(x_\star d)])^2}
 &=\frac1{\E_x[N_y(x_\star d)]}
   +2\lambda\int_0^{x_\star d}
     \exp\bigl(dG_\alpha(p,q_\star)\bigr)\dd r\\
 &\le1+2\lambda\sup_{0\le\alpha\le1}
   \int_0^{x_\star d}\exp\bigl(dG_\alpha(p,q_\star)\bigr)\dd r
 \ll1
\end{split}
\]
uniformly in $d,x,y$.  Paley--Zygmund now yields
$$
 \Pp_x\!\left(N_y(x_\star d)>0\right)\ge p_0.
$$
The event $\{N_y(x_\star d)>0\}$ is contained in
$\{\tau_y\le x_\star d\}$, which proves \eqref{eq:cluster1}.
\end{proof}

\subsection{Proof of Lemma~\ref{lem:cluster2}}
\label{app:cluster2}

\begin{proof}[Proof of Lemma~\ref{lem:cluster2}]
Let $X$ be the random walk of one particle, and let $q(x,z)$ be its
jump rate from $x$ to $z$, so $\sum_zq(x,z)=1$.  For $0\le t\le K$, let $J_A(t)$ be the
number of mutation jumps into $A$ by time $t$.  Equations
\eqref{eq:event1} and~\eqref{eq:spine1} give
\begin{equation}\label{eq:cluster8}
 \E_x[J_A(t)]=\int_0^t e^{\lambda s}
 \E_x\!\left[\sum_{z\in A}q(X(s),z)\right]\dd s.
\end{equation}
The sum inside the expectation is at most one.  Hence,
$\E_x[J_A(t)]\le(e^{\lambda t}-1)/\lambda
\le\lambda^{-1}e^{\lambda K}$,
uniformly in $x$ and $A$.

To compute the second moment, we first classify an unordered pair of distinct jumps into $A$ as follows.
\begin{itemize}
\item \emph{Ancestral pair.}  One jump lies on the ancestral line of the
other.  We select the earlier jump, which counts every such unordered pair
exactly once.  If the selected jump occurs at time $s$ and lands at $z$,
its descendants make on average $\E_z[J_A(t-s)]$ further jumps into $A$.

\item \emph{Split pair.}  The two ancestral lines separate at a unique
branching event.  We select that event, which counts every such unordered
pair exactly once.  The two descendant clusters rooted at its children are
independent, and each has conditional mean
$\E_{X(s)}[J_A(t-s)]$; these events occur at rate $\lambda$.
\end{itemize}
Equations \eqref{eq:event1} and~\eqref{eq:spine1}
give the following identity for the second factorial moment:
\begin{equation}\label{eq:cluster9}
\begin{split}
 \E_x[J_A(t)(J_A(t)-1)]
 &=2\int_0^t e^{\lambda s}\E_x\!\left[
   \sum_{z\in A}q(X(s),z)\E_z[J_A(t-s)]\right]\dd s\\
 &\quad+2\lambda\int_0^t e^{\lambda s}
   \E_x\!\left[\bigl(\E_{X(s)}[J_A(t-s)]\bigr)^2\right]\dd s.
\end{split}
\end{equation}
In both terms, the factor two converts the unordered pairs counted above
into the ordered pairs in $J_A(t)(J_A(t)-1)$.

In \eqref{eq:cluster9}, the bound
$\E_y[J_A(r)]\le\lambda^{-1}e^{\lambda K}$, valid for $0\le r\le K$, gives
\[
\begin{split}
 &2\int_0^t e^{\lambda s}\E_x\!\left[
   \sum_{z\in A}q(X(s),z)\E_z[J_A(t-s)]\right]\dd s\\
 &\qquad\le
 \frac{2e^{\lambda K}}{\lambda}\int_0^t e^{\lambda s}
 \E_x\!\left[\sum_{z\in A}q(X(s),z)\right]\dd s
 =\frac{2e^{\lambda K}}{\lambda}\E_x[J_A(t)]
\end{split}
\]
and, for every state $y$,
\[
 \bigl(\E_y[J_A(t-s)]\bigr)^2
 \le\frac{e^{\lambda K}}{\lambda}\E_y[J_A(t-s)].
\]
By the Markov property of $X$, \eqref{eq:cluster8}, and
Fubini's theorem,
$$
\begin{aligned}
 \int_0^t e^{\lambda s}\E_x\!\left[\E_{X(s)}[J_A(t-s)]\right]\dd s
 &=\int_0^t\int_0^{t-s}e^{\lambda(s+u)}
   \E_x\!\left[\sum_{z\in A}q(X(s+u),z)\right]\dd u\dd s\\
 &=\int_0^t v e^{\lambda v}
   \E_x\!\left[\sum_{z\in A}q(X(v),z)\right]\dd v
 \le t\E_x[J_A(t)].
\end{aligned}
$$
Consequently, the second term in
\eqref{eq:cluster9} is at most
\[
 2e^{\lambda K}\int_0^t e^{\lambda s}
 \E_x\!\left[\E_{X(s)}[J_A(t-s)]\right]\dd s
 \le2t e^{\lambda K}\E_x[J_A(t)],
\]
and hence
\[
 \E_x[J_A(t)(J_A(t)-1)]
 \le \frac{2e^{\lambda K}}{\lambda}(1+\lambda t)\E_x[J_A(t)].
\]
At $t=K$, $2\lambda^{-1}e^{\lambda K}(1+\lambda K)
\le e^{C(1+K)}$ for a constant depending only on $\lambda$.  This proves
\eqref{eq:cluster2}.
\end{proof}

\subsection{Proof of Lemma~\ref{lem:binary2}}\label{app:binary1}

\begin{lemma}\label{lem:binary5}
Let $N_y(t)$ be the number of particles at $y$ at time $t$ and denote by 
$\alpha=d_{\rm H}(x,y)/d$.  With $q=e^{-2t/d}$,
\begin{equation}\label{eq:binary17}
 \mu:=\E_x[N_y(t)]
 =e^{\lambda t}2^{-d}(1+q)^{(1-\alpha)d}(1-q)^{\alpha d}.
\end{equation}
For $0\le h\le-\log q$, we have
\begin{equation}\label{eq:binary18}
 \frac{\E_x[N_y(t)^2]}{\mu^2}
 =\frac1\mu+2\lambda\int_0^t
 \exp\!\left(dG_\alpha\!\left(e^{-2(t-s)/d},q\right)\right)\dd s,
\end{equation}
where
\begin{align}
    G_\alpha(e^{-h},q)={}&-\frac{\lambda h}{2}
 +(1-\alpha)\log\!\left(
 1+\frac{q^2}{(1+q)^2}(e^{2h}-1)\right)+\alpha\log\!\left(
 1+\frac{q^2}{(1-q)^2}(e^{2h}-1)\right).\label{eq:G1}
\end{align}
\end{lemma}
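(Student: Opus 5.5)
The plan is to specialize the general computation from the proof of Lemma~\ref{lem:cluster1} to $b=2$, where $\beta=2$, and then rewrite the resulting kernel in the variable $h$.

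For the first moment, I would use \eqref{eq:spine1} to get $\E_x[N_y(t)]=e^{\lambda t}P_t(x,y)$. Then I substitute \eqref{eq:kernel1} with $b=2$, so that $A_t=1+q$, $B_t=1-q$ and $q=e^{-2t/d}$, and set $d_{\rm H}(x,y)=\alpha d$. This gives \eqref{eq:binary17} directly.

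For the second moment, I would take the identity \eqref{eq:cluster5} unchanged; it was derived for general $b$ from the latest-common-ancestor decomposition. Its coordinatewise factorization from Step~2 of the proof of Lemma~\ref{lem:cluster1} also holds verbatim. With $p=e^{-2r/d}$, where $r$ is the branching time of the common ancestor, this yields \eqref{eq:cluster6}:
\[
 G_\alpha(p,q)=\frac{\lambda}{2}\log p+(1-\alpha)\log K_+(p,q)+\alpha\log K_-(p,q),
\]
using $e^{-\lambda r}=p^{\lambda d/2}$.

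Next I evaluate \eqref{eq:kernel2} and \eqref{eq:kernel3} at $b=2$. Both numerators become $q^2(1-p)(1+p)=q^2(1-p^2)$. Dividing by $p^2$ gives $q^2(p^{-2}-1)$, so
\[
 K_\pm(p,q)=1+\frac{q^2}{(1\pm q)^2}\bigl(p^{-2}-1\bigr).
\]
Writing $p=e^{-h}$ turns $p^{-2}-1$ into $e^{2h}-1$ and $\frac{\lambda}{2}\log p$ into $-\lambda h/2$, which is exactly \eqref{eq:G1}.

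The constraint $0\le h\le -\log q$ is just $q\le p\le 1$, that is, $0\le r\le t$. This is the range on which $K_\pm$ were defined and on which $P_{t-r}$ makes sense.

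The last step is the change of variable $s=t-r$ in the integral. Under it, $p=e^{-2r/d}$ becomes $e^{-2(t-s)/d}$ and $\mathrm{d}r$ becomes $\mathrm{d}s$ over $[0,t]$, giving \eqref{eq:binary18}.

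None of these steps is really difficult. The only places needing care are checking that the $b=2$ simplification of $K_-$ is correct (with $b-1+p=1+p$) and keeping track of which time variable the integral is written in, so that the exponent $-\lambda h/2$ matches $e^{-\lambda r}$ with $r=t-s$ rather than $s$.
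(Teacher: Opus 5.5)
Your proposal is correct and follows essentially the same route as the paper. The paper also obtains the first moment from the many-to-one formula with the $b=2$ kernel, and the second moment by specializing \eqref{eq:cluster6} with $K_\pm(p,q)=1+\frac{q^2}{(1\pm q)^2}(p^{-2}-1)$ and then changing variables $s=t-r$ (so that $p=e^{-h}$ with $h=2(t-s)/d$).
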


\begin{proof}
Equation~\eqref{eq:binary17} follows immediately from the many-to-one formula \eqref{eq:spine1}.
Since $b=2$, equations \eqref{eq:kernel2} and~\eqref{eq:kernel3} become
\[
 K_+(p,q)=1+\frac{q^2}{(1+q)^2}(p^{-2}-1),
 \qquad
 K_-(p,q)=1+\frac{q^2}{(1-q)^2}(p^{-2}-1).
\]
The second-moment formula \eqref{eq:cluster6} therefore becomes
\begin{equation*}
 \frac{\E_x[N_y(t)^2]}{\mu^2}
 =\frac1\mu+2\lambda\int_0^t e^{-\lambda r}
 K_+(e^{-2r/d},q)^{(1-\alpha)d}
 K_-(e^{-2r/d},q)^{\alpha d}\dd r.
\end{equation*}
Using the change of variable $s=t-r$ gives \eqref{eq:binary18}.
\end{proof}

\begin{lemma}\label{lem:binary6}
Fix $\bar q\in(q_\star,1/2)$, and suppose that
$q=e^{-2t/d}\in[q_\star,\bar q]$.  Let $\alpha$, $\mu$, and $G_\alpha$ be as in
Lemma~\ref{lem:binary5}. Then, uniformly in $d$ large enough and
$\alpha\in[0,1]$,
\begin{equation}\label{eq:binary20}
 \int_0^t
 \exp\!\left(dG_\alpha\!\left(e^{-2(t-s)/d},q\right)\right)\dd s
 \ll_{\bar q}\max\{1,\mu^{-1}\}.
\end{equation}
\end{lemma}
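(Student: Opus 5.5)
Substitute $h=2(t-s)/d$, so that $\dd s=\tfrac d2\dd h$ and $h$ runs over $[0,H]$ with $H:=-\log q=2t/d$. Write $g(h):=G_\alpha(e^{-h},q)$. The integral in \eqref{eq:binary20} becomes
\[
 \frac d2\int_0^H e^{d g(h)}\dd h .
\]
The plan is to show that $g$ is uniformly strictly convex on $[0,H]$, with values $g(0)=0$ and $e^{dg(H)}=\mu^{-1}$ at the endpoints. The integral is then controlled by boundary layers of width $O(1/d)$ at the maximizing endpoints, and each such layer contributes $O(e^{d\max g})$. The only bounded parameters are $\alpha\in[0,1]$ and $q\in[q_\star,\bar q]\subset(0,1/2)$, so all constants below should come out uniform in these by compactness.

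\emph{Step 1: endpoint values and slopes.} From \eqref{eq:G1}, $g(0)=0$. At $h=H$ we have $e^{2H}=q^{-2}$, and a direct computation gives
\[
 dg(H)=\tfrac{\lambda d}{2}\log q+(1-\alpha)d\log\tfrac{2}{1+q}+\alpha d\log\tfrac2{1-q}.
\]
Comparing with \eqref{eq:binary17}, and using $e^{\lambda t}=q^{-\lambda d/2}$, this says exactly $e^{dg(H)}=\mu^{-1}$. Differentiating \eqref{eq:G1} and simplifying $(1\pm q)^2+1-q^2=2(1\pm q)$ gives
\[
 g'(H)=-\tfrac\lambda2+\tfrac{1-\alpha}{1+q}+\tfrac{\alpha}{1-q}\ \ge\ \tfrac1{1+\bar q}-\tfrac12\ =:\ c_1>0,
\]
where I use $\lambda<1$ and $q\le \bar q<1/2$, so that $\tfrac1{1+q}\ge\tfrac1{1+\bar q}>\tfrac23$. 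At the other end, $g'(0)=-\lambda/2+2(1-\alpha)q^2/(1+q)^2+2\alpha q^2/(1-q)^2$, which may have either sign. Its sign is what decides whether the endpoint $h=0$ matters.

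\emph{Step 2: uniform convexity.} Each logarithm in \eqref{eq:G1} has the form $\log(1+c(e^{2h}-1))$ with $0<c<1$. Here $c<1$ holds because $q<1/2$ gives $q/(1-q)<1$. The second derivative of this function is $4c(1-c)e^{2h}/(1+c(e^{2h}-1))^2$. This matches the $b=2$ case of the computation in Lemma~\ref{lem:cluster3}, where $H(v)=4\delta_-(1-\delta_-)v$. On the compact parameter range this is bounded below by some $c_2>0$ and above by some $C_2$. Hence $c_2\le g''\le C_2$ uniformly in $\alpha$, $q$ and $h$.

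\emph{Step 3: splitting the integral.} Fix a small constant $\eta>0$.
\begin{itemize}[leftmargin=*]
\item \emph{Near $h=H$.} On $[H-\eta,H]$, convexity and Step~1 give $g(h)\le g(H)-c_1(H-h)$ once $\eta$ is chosen with $C_2\eta\le c_1/2$, up to replacing $c_1$ by $c_1/2$. The contribution of this window is then at most $\tfrac d2\int_0^\infty e^{dg(H)-dc_1v/2}\dd v\ll\mu^{-1}$.
\item \emph{Near $h=0$, first case: $g'(0)\le-c_3$ for a fixed small $c_3>0$.} Then on $[0,\eta]$ we get $g(h)\le -c_3h/2$, and this window contributes $O(1)$.
\item \emph{Near $h=0$, second case: $g'(0)>-c_3$.} Then $g'(h)\ge -c_3+c_2h$. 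On the window itself I use the crude bound $g\le \max\{0,g(\eta)\}$, which gives a contribution of at most $d\,e^{d\max\{0,g(\eta)\}}$. On the other hand, strict convexity forces $g(H)\ge g(\eta)+c_2(H-\eta)^2/4$, provided $c_3$ is small relative to $c_2H$; note $H\ge -\log\bar q$ is bounded below. So $\mu^{-1}=e^{dg(H)}\ge e^{d\max\{0,g(\eta)\}}e^{c'd}$, and the factor $d$ from the window is absorbed into $\mu^{-1}$.
\item \emph{The middle range $[\eta,H-\eta]$.} By convexity, $g\le\max\{g(\eta),g(H-\eta)\}$ there. The window estimates above give $g(H-\eta)\le g(H)-c_1\eta/2$ and, in the first case at $0$, $g(\eta)\le -c_3\eta/2$. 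So the middle range contributes at most $d\,e^{-c''d}\max\{1,\mu^{-1}\}=o(\max\{1,\mu^{-1}\})$.
\end{itemize}

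I expect the main obstacle to be the bookkeeping at $h=0$ when $g'(0)$ is near zero or positive. This can genuinely happen when $q$ is close to $1/2$ and $\alpha$ is close to $1$, because $2q^2/(1-q)^2$ is then close to $2>\lambda/2$. In that regime the naive Laplace estimate near $0$ would lose a factor $d^{1/2}$ or $d$, and the argument relies on the uniform lower bound $g''\ge c_2$ to make $\mu^{-1}$ exponentially large enough to absorb it. Everything else is a compactness argument in $(\alpha,q)$.
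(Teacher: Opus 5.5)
Your proof is correct. It uses the same two inputs as the paper: the uniform bound $g''\ge c_2$, and the endpoint identities $g(0)=0$ and $e^{dg(H)}=\mu^{-1}$. From there you do a Laplace-type boundary-layer analysis, while the paper finishes in one step.

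The paper notes that $h\mapsto g(h)-\frac{c}{2}h^2$ is convex and hence lies below its chord on $[0,H]$. This gives
\[
 d\,g(h)\le -\frac{h}{H}\log\mu-\frac{c}{2}\,d\,h(H-h),
\]
so $e^{dg(h)}\le\max\{1,\mu^{-1}\}\exp(-c'd\min\{h,H-h\})$ on all of $[0,H]$. Integrating against $\frac d2\,\dd h$ gives the claim.

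This route needs none of the following:
\begin{itemize}
\item the slope computation $g'(H)\ge c_1$, which uses $\lambda<1$ and $\bar q<1/2$;
\item the upper bound $g''\le C_2$;
\item a case split on the sign of $g'(0)$.
\end{itemize}
The obstacle you expected at $h=0$ never appears. When $\mu<1$ the chord term is at most $\mu^{-1}$, and the parabolic term alone supplies the decay. Your route does give sharper information: the $h=H$ layer contributes $\asymp\mu^{-1}$, and when $g'(0)>-c_3$ the $h=0$ layer is exponentially small relative to $\mu^{-1}$. None of this is needed downstream.

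If you keep your version, tidy three points:
\begin{itemize}
\item State the order of constants: first $c_3\le c_2\log(1/\bar q)/4$, then $\eta$ with $C_2\eta\le\min\{c_1,c_3\}/2$ and $\eta\le\frac12\log(1/\bar q)$.
\item The bound $g(h)\le g(H)-\frac{c_1}{2}(H-h)$ near $H$ comes from $g''\le C_2$, not from convexity; convexity gives the reverse inequality.
\item In the middle range, the second case at $0$ should be handled by $g(\eta)\le g(H)-c'$ from the preceding bullet, not by $g(\eta)\le -c_3\eta/2$.
\end{itemize}
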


\begin{proof}
For $q\in[q_\star,\bar q]$ and
$0\le h\le-\log q$, the nonlinear part of 
$h\mapsto G_\alpha(e^{-h},q)$ has the form
$\log(1-\theta+\theta e^{2h})$, with
$\theta=q^2/(1\pm q)^2$, and
\begin{equation*}
 \frac{\mathrm d^2}{\mathrm d h^2}\log(1-\theta+\theta e^{2h})
 =\frac{4\theta(1-\theta)e^{2h}}
 {(1-\theta+\theta e^{2h})^2}\gg_{\bar{q}} 1.
\end{equation*}
 Hence, inserting into \eqref{eq:G1} yields 
$\frac{\mathrm d^2}{\mathrm d h^2}G_\alpha(e^{-h},q)\ge c$.

A direct computation yields $G_\alpha(1,q)=0$ and
\[
\begin{split}
 G_\alpha(q,q)
 &=\frac\lambda2\log q
 +(1-\alpha)\log\!\left(\frac2{1+q}\right)
 +\alpha\log\!\left(\frac2{1-q}\right)\\
 &=\frac\lambda2\log q+\log2
 -(1-\alpha)\log(1+q)-\alpha\log(1-q)\\
 &=-\frac1d\log\mu,
\end{split}
\]
where the last equality follows from $t/d=-\frac12\log q$ in
\eqref{eq:binary17}.  The function
$h\mapsto G_\alpha(e^{-h},q)-ch^2/2$ is convex and therefore lies below
the line segment joining its two endpoint values, i.e., 
\[
\begin{split}
 G_\alpha(e^{-h},q)-\frac c2h^2
 &\le \frac{h}{-\log q}
 \left(G_\alpha(q,q)-\frac c2(-\log q)^2\right)
 +\left(1-\frac{h}{-\log q}\right)G_\alpha(1,q)\\
 &=-\frac{h}{-\log q}\frac{\log\mu}{d}
 -\frac c2h(-\log q).
\end{split}
\]
Adding $ch^2/2$ to both sides gives
\begin{equation}\label{eq:binary22}
 G_\alpha(e^{-h},q)
 \le-\frac{h}{-\log q}\frac{\log\mu}{d}
 -\frac{c}{2}h(-\log q-h).
\end{equation}
On one hand, the exponential of the first term on the right-hand side of
\eqref{eq:binary22}, after multiplication by $d$, is
\[
 \exp\!\left(-\frac{h}{-\log q}\log\mu\right)
 =\mu^{-h/(-\log q)}\le\max\{1,\mu^{-1}\},
 \qquad 0\le h\le-\log q.
\]
On the other hand, since $-\log q\ge-\log\bar q$,
$$
 h(-\log q-h)
 \ge\frac{-\log\bar q}{2}\min\{h,-\log q-h\},
$$
because on the first half of the interval $-\log q-h\ge(-\log q)/2$,
and on the second half $h\ge(-\log q)/2$.
Then, inserting the above estimates into  \eqref{eq:binary22} gives
$$
 e^{dG_\alpha(e^{-h},q)}
 \le\max\{1,\mu^{-1}\}
 \exp\!\left(-\frac{dc(-\log\bar q)}{4}
 \min\{h,-\log q-h\}\right).
$$
Using the change of variable
$h=2(t-s)/d$ proves \eqref{eq:binary20}.
\end{proof}

\begin{proof}[Proof of Lemma~\ref{lem:binary2}]
Fix any $\bar q\in(q_\star,1/2)$.

\emph{Part~(i).}
Apply Lemma~\ref{lem:binary5} with $t=x_\star d-D$.  Then
$\alpha=1-r/d$ and $q=q_\star e^{2D/d}$.  Formula
\eqref{eq:binary17} can be rewritten exactly as
$$
 \log\mu
 =d\Phi\!\left(x_\star-\frac Dd\right)
   +r\log\!\left(\frac{1+q}{1-q}\right).
$$
Taylor's theorem, applied on a fixed neighborhood of $x_\star$, gives
$$
 d\Phi\!\left(x_\star-\frac Dd\right)
 =-\chi D+O\!\left(\frac{D^2}{d}\right),
 \qquad
 \log\!\left(\frac{1+q}{1-q}\right)
 =\log R+O\!\left(\frac Dd\right).
$$
The derivatives in these two applications are bounded uniformly for
$0\le D\le C\log\log d$.  Consequently,\footnote{Here $O_C(\cdot)$ allows
the implicit constant to depend on the fixed $C$ in part~(i), uniformly
over the ranges in Lemma~\ref{lem:binary2}(i).}
$$
 \log\mu=-\chi D+r\log R
 +O\!\left(\frac{D^2+rD}{d}\right)
 =-\chi D+r\log R
 +O_C\!\left(\frac{(\log\log d)^2}{d}\right).
$$
In particular,
$\mu/(e^{-\chi D}R^r)\to1$ uniformly, and the hypothesis in part~(i)
gives $e^{-\chi D}R^r\le1$.  Also $q=q_\star e^{2D/d}\in
[q_\star,\bar q]$ for all large $d$.  Lemmas
\ref{lem:binary5} and~\ref{lem:binary6} give
$\E_x[N_y(x_\star d-D)^2]\ll_C\mu+\mu^2$.
Thus, $\mu\le2$ for all large $d$, and Paley--Zygmund yields
\[
\begin{aligned}
 \Pp_x\!\left(\tau_y\le x_\star d-D\right)
 &\ge\Pp_x\!\left(N_y(x_\star d-D)>0\right)\\
 &\ge\frac{\mu^2}{\E_x[N_y(x_\star d-D)^2]}\\
 &\gg_C\mu\gg e^{-\chi D}R^r
\end{aligned}
\]
for all large $d$.  This proves part~(i).

\smallskip\noindent\emph{Part~(ii).}
Choose $\delta>0$ small enough that
$$
 \delta\le\frac{x_\star}{4},\qquad
 4\delta\le\log\!\left(\frac{\bar q}{q_\star}\right);
$$
our hypothesis gives $\alpha\le1-\ell/(2d)$.  At
$t=x_\star d-2\delta\ell$, one has
$q=e^{-2t/d}=q_\star e^{4\delta\ell/d}\in[q_\star,\bar q]$.
Lemma~\ref{lem:binary5} then leads to
\begin{align}
     \frac1d\log\E_x[N_y(t)]
 =\lambda\left(x_\star-\frac{2\delta\ell}{d}\right)-\log2
 +(1-\alpha)\log(1+q)+\alpha\log(1-q).\label{eq:mean1}
\end{align}
This right-hand side is decreasing in $\alpha$, so its minimum over
$\alpha\le1-\ell/(2d)$ is at $\alpha=1-\ell/(2d)$.  For
$0\le a\le\delta$, define
\[
 F(a):=\lambda\left(x_\star-\frac{2a\ell}{d}\right)-\log2
 +\frac{\ell}{2d}\log(1+q_\star e^{4a\ell/d})
 +\left(1-\frac{\ell}{2d}\right)\log(1-q_\star e^{4a\ell/d}).
\]
It follows from \eqref{eq:mean1} that
\[
 \frac1d\log\E_x[N_y(t)]\ge F(\delta).
\]
By \eqref{eq:root1},
$F(0)=\ell\log R/(2d)$, and
\[
 F'(a)=-\frac{2\lambda\ell}{d}
 +\frac{4\ell}{d}q_\star e^{4a\ell/d}\left(
 \frac{\frac{\ell}{2d}}{1+q_\star e^{4a\ell/d}}
 -\frac{1-\frac{\ell}{2d}}{1-q_\star e^{4a\ell/d}}\right).
\]
Next, we argue that $|F'(a)|\ll \ell/d$.  Indeed, since
$0\le\ell/(2d)\le1/2$ and $q_\star e^{4a\ell/d}\le\bar q$,
\[
 \frac{2\lambda\ell}{d}\ll\frac\ell d,
 \qquad
 \frac{4\ell}{d}q_\star e^{4a\ell/d}
 \left(
 \frac{\frac{\ell}{2d}}{1+q_\star e^{4a\ell/d}}
 +\frac{1-\frac{\ell}{2d}}{1-q_\star e^{4a\ell/d}}
 \right)
 \ll_{\bar q}\frac\ell d,
\]
where the second denominator is bounded below by $1-\bar q>1/2$.
For $\delta$ small enough depending on $R$, we have that  integrating the derivative bound over
$a\in[0,\delta]$ gives
\[
 F(\delta)\ge F(0)-\frac{C\delta\ell}{d}
 =\frac{\ell}{2d}\log R-\frac{C\delta\ell}{d}
 \ge\frac{\ell}{4d}\log R.
\]
In summary, we have proved
\begin{equation}\label{eq:binary23}
 q=e^{-2t/d}=q_\star e^{4\delta\ell/d}\in[q_\star,\bar q],
 \qquad \mu\ge R^{\ell/4}\ge1.
\end{equation}
Lemmas \ref{lem:binary5} and~\ref{lem:binary6},
together with \eqref{eq:binary23}, now give
$\E_x[N_y(t)^2]/\mu^2\ll1$.
Hence, Paley--Zygmund gives
$$
 \Pp_x(\tau_y\le t)
 \ge\Pp_x(N_y(t)>0)
 \ge\frac{\E_x[N_y(t)]^2}{\E_x[N_y(t)^2]}\gg1.
$$
This proves part~(ii).
\end{proof}
\section{Deterministic arguments}\label{sec:elementary1}

This appendix collects general estimates and deferred deterministic arguments
used in the main proofs.

\subsection{Proof of Lemma~\ref{lem:ratio1}}
\label{app:ratio1}

\begin{proof}[Proof of Lemma~\ref{lem:ratio1}]
By \eqref{eq:rate1} and $\Phi(x_\star)=0$,
$q_\star^{-\lambda/\beta}(1-q_\star)=b$.
Hence, $R-1=q_\star^{1-\lambda/\beta}=bq_\star/(1-q_\star)$.
In particular,
\[
 R-1<\lambda
 \quad\Longleftrightarrow\quad
 \frac{bq_\star}{1-q_\star}<\lambda
 \quad\Longleftrightarrow\quad
 q_\star<\frac{\lambda}{b+\lambda}.
\]
Since
$q\mapsto q^{-\lambda/\beta}(1-q)$ is strictly decreasing and equals $b$ at
$q_\star$, it suffices to show that its value at $\lambda/(b+\lambda)$ is less than $b$.  Indeed,
\[
 \frac1b\left(\frac{\lambda}{b+\lambda}\right)^{-\lambda/\beta}
 \left(1-\frac{\lambda}{b+\lambda}\right)
 =(b+\lambda)^{\lambda/\beta-1}\lambda^{-\lambda/\beta}<1
\]
is equivalent to
$(b+\lambda)^{1-\lambda/\beta}\lambda^{\lambda/\beta}>1$.
Since $\lambda/\beta=\frac{b-1}{b}\lambda<1$ and
$\log(b+\lambda)>\log b$, it is enough to prove
$(1-\frac{b-1}{b}\lambda)\log b+\frac{b-1}{b}\lambda\log\lambda>0$.
The derivative of the left-hand side is
$\frac{b-1}{b}(1+\log\lambda-\log b)<0$ on $(0,1]$ when $b\ge3$, while its value at
$\lambda=1$ is $b^{-1}\log b>0$.  The desired inequality then follows.
\end{proof}

\subsection{Proof of Lemma~\ref{lem:boxes1}}\label{app:boxes1}

\begin{proof}[Proof of Lemma~\ref{lem:boxes1}]
Independence gives
$$
 \Pp\!\left(y\notin\bigcup_vS_v\right)
 =\prod_v\bigl(1-\Pp(y\in S_v)\bigr).
$$
For $0\le x\le p$,
$$
 -\log(1-x)=\int_0^x\frac{1}{1-s}\dd s
 \le\frac{x}{1-p}.
$$
Consequently,
$$
 \Pp\!\left(y\notin\bigcup_vS_v\right)
 \ge \exp\!\left(-\frac{\sum_v\Pp(y\in S_v)}{1-p}\right),
$$
which proves \eqref{eq:boxes1}.  Let
$I_y=\one_{\{y\notin\bigcup_v S_v\}}$.  For $y\ne z$, the assumption
$|S_v|\le1$ gives
$\Pp(y,z\notin S_v)=1-\Pp(y\in S_v)-\Pp(z\in S_v)$.  Independence over
$v$ gives
$$
 \E[I_yI_z]=\prod_v\bigl(1-\Pp(y\in S_v)-\Pp(z\in S_v)\bigr)
 \le\prod_v\bigl(1-\Pp(y\in S_v)\bigr)
             \bigl(1-\Pp(z\in S_v)\bigr)
 =\E[I_y]\E[I_z].
$$
Thus, the indicators $(I_y)$ are pairwise negatively correlated, and
$$
 \operatorname{Var}\!\left(\sum_{y\in\mathcal U}I_y\right)
 \le\sum_{y\in\mathcal U}\operatorname{Var}(I_y)
 \le\sum_{y\in\mathcal U}\E[I_y]
 =\E\!\left[\sum_{y\in\mathcal U}I_y\right].
$$
Chebyshev's inequality then leads to
$$
\begin{aligned}
 \Pp\!\left(\mathcal U\subseteq\bigcup_vS_v\right)
 &=\Pp\!\left(\sum_{y\in\mathcal U}I_y=0\right)\\
 &\le\frac{\operatorname{Var}(\sum_{y\in\mathcal U}I_y)}
 {\E[\sum_{y\in\mathcal U}I_y]^2}
 \le\frac{1}{\E[\sum_{y\in\mathcal U}I_y]}
 =\left(\sum_{y\in\mathcal U}
 \Pp\!\left(y\notin\bigcup_vS_v\right)\right)^{-1}
 \\&\le\left(
 \sum_{y\in\mathcal U}
 \exp\!\left(-\frac{\sum_v\Pp(y\in S_v)}{1-p}\right)
 \right)^{-1}.
\end{aligned}
$$
This proves \eqref{eq:boxes2}.
\end{proof}

\subsection{A deterministic compound-Poisson estimate}\label{app:compound1}

\begin{lemma}\label{lem:compound1}
Let $\nu$ be a finite measure on
$\mathbb N$ and $(N_j)_{j\ge1}$ be independent Poisson random variables with means
$\nu(\{j\})$.  For every $P\ge1$ and
$\gamma\ge0$,
\begin{equation}\label{eq:compound1}
 \sum_{k\ge1}(1+k)^\gamma
 \Pp\!\left(\sum_{j\ge1}jN_j\ge k\right)^P
 \ll_\gamma(1+\nu(\mathbb N))^{\gamma+1}
 \sum_{k\ge1}(1+k)^\gamma\nu([k,\infty))^P.
\end{equation}
\end{lemma}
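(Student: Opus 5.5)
Write $S=\sum_j jN_j$, $m=\nu(\mathbb N)$ and $\bar\nu(k)=\nu([k,\infty))$. The plan is to bound the tail $\Pp(S\ge k)$ pointwise by a quantity involving only $\bar\nu$ at scales comparable to $k$, and then sum over $k$. The natural decomposition is by the largest jump. On $\{S\ge k\}$, either some $j\ge k/r$ has $N_j\ge1$ for a suitable $r\ge1$, or every summand is smaller than $k/r$ and therefore at least $r$ points must be present. Both cases should be expressible through $\bar\nu$. Which case occurs depends on whether the total number of points $N=\sum_j N_j\sim\operatorname{Pois}(m)$ is large.

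The basic pointwise bound is the following. If $S\ge k$ and $N=n\ge1$, the largest jump is at least $k/n$. Hence
\[
 \Pp(S\ge k)\le\sum_{n\ge1}\Pp\bigl(N=n,\ \exists j\ge \lceil k/n\rceil:\ N_j\ge1\bigr).
\]
Condition on $N=n$; the points are then i.i.d.\ with law $\nu/m$. A union bound gives
\[
 \Pp(S\ge k)\le\sum_{n\ge1}e^{-m}\frac{m^n}{n!}\cdot n\,\frac{\bar\nu(\lceil k/n\rceil)}{m}
 =\sum_{n\ge1}e^{-m}\frac{m^{n-1}}{(n-1)!}\,\bar\nu(\lceil k/n\rceil).
\]
This expresses the tail as an average of $\bar\nu(\lceil k/(1+N')\rceil)$ with $N'\sim\operatorname{Pois}(m)$.

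Next raise this to the power $P$. By Jensen (convexity of $x\mapsto x^P$ against a probability weight),
\[
 \Pp(S\ge k)^P\le\E\bigl[\bar\nu(\lceil k/(1+N')\rceil)^P\bigr].
\]
Multiply by $(1+k)^\gamma$ and sum over $k$. Fix $n=1+N'$ and group the $k$ according to the value $k'=\lceil k/n\rceil$. There are at most $n$ values of $k$ for each $k'$, and $(1+k)^\gamma\le (n(1+k'))^\gamma$. Therefore
\[
 \sum_k(1+k)^\gamma\bar\nu(\lceil k/n\rceil)^P\le n^{\gamma+1}\sum_{k'\ge1}(1+k')^\gamma\bar\nu(k')^P.
\]
Taking expectations leaves the factor $\E[(1+N')^{\gamma+1}]\ll_\gamma(1+m)^{\gamma+1}$, which is a standard Poisson moment bound. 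This gives \eqref{eq:compound1}.

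I expect the main obstacle to be justifying the conditional step. The claim that given $N=n$ the marks are i.i.d.\ with law $\nu/m$ holds because independent Poissons $N_j$ are the counts of a Poisson process with intensity $\nu$, so it is standard. The only other subtlety is the case $m=0$, which is trivial since then $S=0$. The regrouping also needs care: $\lceil k/n\rceil=k'$ forces $k\le nk'$, hence $1+k\le n(1+k')$.
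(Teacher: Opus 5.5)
Your proof is correct and follows the paper's argument essentially step for step: condition on the total number of points, take a union bound over the $n$ i.i.d.\ marks, apply Jensen against the size-biased Poisson weights, regroup by $\lceil k/n\rceil$, and finish with a Poisson moment bound. The only cosmetic difference is that you write the size-biased weights as the law of $1+N'$ with $N'\sim\operatorname{Pois}(m)$ and bound $\E[(1+N')^{\gamma+1}]$ directly, whereas the paper writes the same quantity as $\E[N^{\gamma+2}]/\nu(\mathbb N)$ and invokes Lemma~\ref{lem:poisson1}.
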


\begin{proof}
If $\nu(\mathbb N)=0$, both sides vanish.  Suppose $\nu(\mathbb N)>0$.
Then $\sum_{j\ge1}N_j\sim\operatorname{Pois}(\nu(\mathbb N))$.
Conditional on $\sum_{j\ge1}N_j=n\ge1$, the $n$ Poisson points have
independent weights with common law $\nu/\nu(\mathbb N)$ \cite[Proposition~3.8]{LastPenrose}.  If the sum of
their weights is at least $k$, at least one weight is at least
$\lceil k/n\rceil$.  The union bound therefore gives
\begin{equation}\label{eq:compound2}
 \Pp\!\left(\sum_{j\ge1}jN_j\ge k\,\middle|\,
 \sum_{j\ge1}N_j=n\right)
 \le\frac n{\nu(\mathbb N)}\nu\!\left(
 [\lceil k/n\rceil,\infty)\right).
\end{equation}
Since
\[
 \sum_{n\ge1}
 \Pp\!\left(\sum_{j\ge1}N_j=n\right)
 \frac{n}{\nu(\mathbb N)}
 =
 \frac{\E[\sum_{j\ge1}N_j]}{\nu(\mathbb N)}
 =1,
\]
averaging \eqref{eq:compound2} and applying Jensen's inequality with
these weights yields
\[
 \Pp\!\left(\sum_{j\ge1}jN_j\ge k\right)^P
 \le\sum_{n\ge1}\Pp\!\left(\sum_{j\ge1}N_j=n\right)
 \frac n{\nu(\mathbb N)}
 \nu\!\left([\lceil k/n\rceil,\infty)\right)^P.
\]
For every $n\ge1$, grouping the integers $k$ according to
$j=\lceil k/n\rceil$ gives
\[
 \sum_{k\ge1}(1+k)^\gamma
 \nu\!\left([\lceil k/n\rceil,\infty)\right)^P
 \ll_\gamma n^{\gamma+1}
 \sum_{j\ge1}(1+j)^\gamma\nu([j,\infty))^P.
\]
Finally, Lemma~\ref{lem:poisson1} applied with exponent
$\gamma+2$ gives
\begin{equation*}
 \sum_{n\ge1}\Pp\!\left(\sum_{j\ge1}N_j=n\right)
 \frac{n^{\gamma+2}}{\nu(\mathbb N)}
 =\frac{\E[(\sum_{j\ge1}N_j)^{\gamma+2}]}{\nu(\mathbb N)}
 \ll_\gamma(1+\nu(\mathbb N))^{\gamma+1}.
\end{equation*}
Combining the last three bounds proves \eqref{eq:compound1}.
\end{proof}

\subsection{Two elementary integral and moment bounds}

Proposition~\ref{prop:load1} relies on the following two lemmas.

\begin{lemma}\label{lem:beta1}
For every fixed $a>0$, every $s\ge0$, and every integer $k\ge1$,
\begin{equation}\label{eq:beta3}
 \int_0^\infty e^{-a\lambda u}
 (1-e^{-\lambda u})^{a(k-1)}\dd u\ll_a(1+k)^{-a}.
\end{equation}
Moreover, for $k>e^{\lambda s}$,
\begin{equation}\label{eq:beta4}
 \int_0^s e^{-a\lambda u}
 (1-e^{-\lambda u})^{a(k-1)}\dd u
 \ll_a(1+k)^{-a}e^{-a k e^{-\lambda s}/4}.
\end{equation}
\end{lemma}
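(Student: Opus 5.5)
We substitute $z=e^{-\lambda u}$, so that $\dd u=\dd z/(\lambda z)$ and $u\in[0,\infty)$ corresponds to $z\in(0,1]$. The integral in \eqref{eq:beta3} then becomes
\[
 \frac1\lambda\int_0^1 z^{a-1}(1-z)^{a(k-1)}\dd z
 =\frac1\lambda\,\mathrm{B}\bigl(a,\,a(k-1)+1\bigr),
\]
a Beta integral. The cleanest route avoids Gamma asymptotics and uses $1-z\le e^{-z}$:
\[
 \int_0^1 z^{a-1}(1-z)^{a(k-1)}\dd z\le\int_0^\infty z^{a-1}e^{-a(k-1)z}\dd z .
\]
For $k\ge2$ this equals $\Gamma(a)(a(k-1))^{-a}\ll_a(1+k)^{-a}$. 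For $k=1$ the original integral is $\int_0^1 z^{a-1}\dd z/\lambda=1/(a\lambda)\ll_a 2^{-a}$. This proves \eqref{eq:beta3}.

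For \eqref{eq:beta4}, the same substitution turns the range $u\in[0,s]$ into $z\in[e^{-\lambda s},1]$. We bound $(1-z)^{a(k-1)}\le e^{-a(k-1)z}$ and split the exponent: half of it is kept to produce the decay factor, and the other half supplies the power of $k$. On $z\ge e^{-\lambda s}$,
\[
 e^{-a(k-1)z}\le e^{-a(k-1)e^{-\lambda s}/2}\,e^{-a(k-1)z/2}.
\]
Integrating $z^{a-1}e^{-a(k-1)z/2}$ over $(0,\infty)$ gives $\Gamma(a)(a(k-1)/2)^{-a}\ll_a(1+k)^{-a}$ when $k\ge2$.

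It remains to convert $e^{-a(k-1)e^{-\lambda s}/2}$ into $e^{-ake^{-\lambda s}/4}$. For $k\ge2$ we have $k-1\ge k/2$, so $(k-1)/2\ge k/4$, which gives exactly the stated exponent.

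The case $k=1$ needs separate treatment, and this is the only delicate point. The hypothesis $k>e^{\lambda s}\ge1$ excludes $k=1$ unless $s=0$ is treated carefully: if $k=1$ then $e^{\lambda s}<1$, impossible for $s\ge0$. So $k\ge2$ always holds under the hypothesis, and the proof is complete. The main things to check are the $k=1$ exclusion and that the constant $1/4$ survives the $k-1$ versus $k$ loss, both of which hold as shown.
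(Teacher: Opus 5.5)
Your proof is correct and follows essentially the same route as the paper. The shared steps are the substitution $z=e^{-\lambda u}$, the bound $1-z\le e^{-z}$ followed by extension to a Gamma integral (with $k=1$ handled directly), and, for the truncated integral, splitting the exponent in half on $z\ge e^{-\lambda s}$ and using $k-1\ge k/2$ (since $k>e^{\lambda s}\ge 1$ forces $k\ge 2$) to obtain $e^{-ake^{-\lambda s}/4}$; the paper does this split after the change of variable $y=a(k-1)z$, which is the same estimate.
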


\begin{proof}
With $z=e^{-\lambda u}$, the left-hand side of 
\eqref{eq:beta3} becomes
$$
 \frac1\lambda\int_0^1z^{a-1}(1-z)^{a(k-1)}\dd z.
$$
For $k\ge2$, use $(1-z)^{a(k-1)}\le e^{-a(k-1)z}$ and extend the integral
to $[0,\infty)$ to obtain
$$
 \frac1\lambda\int_0^\infty z^{a-1}e^{-a(k-1)z}\dd z
 =\frac{\Gamma(a)}{\lambda\bigl(a(k-1)\bigr)^a}
 \ll_a(1+k)^{-a}.
$$
For $k=1$, the integral equals $(a\lambda)^{-1}$, and hence 
\eqref{eq:beta3} follows.

For \eqref{eq:beta4}, the same substitution and
$1-z\le e^{-z}$ give
$$
 \int_0^s e^{-a\lambda u}(1-e^{-\lambda u})^{a(k-1)}\dd u
 \le\frac1\lambda\int_{e^{-\lambda s}}^\infty
 z^{a-1}e^{-a(k-1)z}\dd z.
$$
Since $k>e^{\lambda s}\ge1$, it holds $k\ge2$ and $k-1\ge k/2$.
Using the change of variable $y=a(k-1)z$,
\[
\begin{split}
 \int_0^s e^{-a\lambda u}
 (1-e^{-\lambda u})^{a(k-1)}\dd u
 &\le\frac{1}{\lambda\bigl(a(k-1)\bigr)^a}
 \int_{a(k-1)e^{-\lambda s}}^\infty y^{a-1}e^{-y}\dd y\\
 &\le\frac{e^{-a(k-1)e^{-\lambda s}/2}}
 {\lambda\bigl(a(k-1)\bigr)^a}
 \int_0^\infty y^{a-1}e^{-y/2}\dd y\\
 &\ll_a(1+k)^{-a}
 e^{-a k e^{-\lambda s}/4}.
\end{split}
\]
This proves \eqref{eq:beta4}.
\end{proof}

\begin{lemma}\label{lem:poisson1}
If $N$ is Poisson with mean $\mu>0$, then, for every fixed real $r\ge1$,
\begin{equation*}
 \frac{\E[N^r]}{\mu}\ll_r(1+\mu)^{r-1}.
\end{equation*}
\end{lemma}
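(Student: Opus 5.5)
We prove $\E[N^r]\ll_r \mu(1+\mu)^{r-1}$ by treating small and large means separately, since the natural bounds differ in the two regimes.

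\emph{Large mean ($\mu\ge1$).} We need $\E[N^r]\ll_r\mu^r$. For this we use the Poisson--Chernoff bound \eqref{eq:chernoff1}. Write
\[
\E[N^r]\le (2e\mu)^r+\E\bigl[N^r\one_{\{N>2e\mu\}}\bigr].
\]
For $x\ge 2e\mu$, \eqref{eq:chernoff1} gives $\Pp(N\ge x)\le 2^{-x}$. Hence the tail term is at most $\sum_{n>2e\mu}n^r2^{-n}$. Since $2e\mu\ge 1$, this sum is bounded by a constant depending only on $r$, and in particular it is $\ll_r\mu^r$.

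\emph{Small mean ($\mu<1$).} We need $\E[N^r]\ll_r\mu$. Since $N^r=0$ when $N=0$, we have
\[
\E[N^r]=\sum_{n\ge1}n^re^{-\mu}\frac{\mu^n}{n!}\le \mu\sum_{n\ge1}\frac{n^r}{n!}\,\mu^{n-1}\le \mu\sum_{n\ge1}\frac{n^r}{n!},
\]
where the last step uses $\mu^{n-1}\le1$. The final series converges and depends only on $r$.

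Combining the two regimes gives $\E[N^r]\ll_r\mu(1+\mu)^{r-1}$ for all $\mu>0$; dividing by $\mu$ yields the claim. The non-integer case of $r$ needs no separate treatment, since both arguments use only $n^r$ pointwise. Every step is elementary, so we expect no real obstacle.
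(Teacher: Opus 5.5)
Your proof is correct. The small-mean case is exactly the paper's argument; the paper uses $0<\mu\le1$ where you use $\mu<1$, which is immaterial.

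In the large-mean regime you take a different route. You truncate at $N\le 2e\mu$ and control the remainder with the Poisson--Chernoff bound \eqref{eq:chernoff1}, which gives $\Pp(N=n)\le\Pp(N\ge n)\le 2^{-n}$ for $n\ge 2e\mu$. The tail therefore contributes at most $\sum_{n\ge1}n^r2^{-n}\ll_r 1\le\mu^r$.

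The paper instead bounds the integer moment $\E[N^{\lceil r\rceil}]$ by a combination of the exact factorial moments $\E[N(N-1)\cdots(N-j+1)]=\mu^j$ for $j\le\lceil r\rceil$, obtaining $\E[N^{\lceil r\rceil}]\ll_r\mu^{\lceil r\rceil}$. It then passes to real $r$ through Lyapunov's inequality $\E[N^r]\le\E[N^{\lceil r\rceil}]^{r/\lceil r\rceil}$.

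Your version handles non-integer $r$ directly, with no interpolation step, and reuses a bound the paper already states; the price is a crude but sufficient tail estimate. The paper's version is purely algebraic and needs no tail bound, but requires the Lyapunov step to leave the integers.

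One small remark: the condition $2e\mu\ge1$ plays no role in bounding the tail sum, which is dominated by $\sum_{n\ge1}n^r2^{-n}$ for every $\mu>0$. Only $\mu\ge1$ is needed, to absorb that constant into $\mu^r$.
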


\begin{proof}
If $0<\mu\le1$, then
$$
 \E[N^r]=e^{-\mu}\sum_{n\ge1}\frac{n^r\mu^n}{n!}
 \le\mu\sum_{n\ge1}\frac{n^r}{n!}\ll_r\mu.
$$
If $\mu\ge1$, the Poisson factorial-moment identities give
\[
\begin{split}
 \E[N^{\lceil r\rceil}]
 &\ll_r\sum_{j=1}^{\lceil r\rceil}
 \E[N(N-1)\cdots(N-j+1)]=\sum_{j=1}^{\lceil r\rceil}\mu^j
 \ll_r\mu^{\lceil r\rceil}.
\end{split}
\]
Lyapunov's inequality then gives
$\E[N^r]\le
\E[N^{\lceil r\rceil}]^{r/\lceil r\rceil}\ll_r\mu^r$. This completes the proof.
\end{proof}

\subsection{A deterministic packing estimate}

\begin{lemma}\label{lem:packing2}
Fix $b>2$ and $0<\epsilon<1/2$.  There exist
$\delta\in(0,1/2)$ and $h>0$ such that, for all sufficiently large $d$,
every set $\mathcal A_d\subseteq\{0,\ldots,b-1\}^d$ satisfying
$|\mathcal A_d|\ge(b-1)^{(1-\epsilon)d}$ contains a subset
$\mathcal C_d$ for which
$$
 |\mathcal C_d|\ge e^{hd},\qquad
 d_{\rm H}(y,z)\ge\delta d\quad(y\ne z).
$$
\end{lemma}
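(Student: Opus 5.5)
A greedy (Gilbert--Varshamov) argument suffices. Let $V(r)=\sum_{j\le r}\binom dj(b-1)^j$ denote the size of a Hamming ball of radius $r$ in $\{0,\ldots,b-1\}^d$. The plan is to pick points of $\mathcal A_d$ one at a time, and after each pick to delete from $\mathcal A_d$ all points within Hamming distance less than $\delta d$ of it. Each step removes at most $V(\delta d)$ points. The procedure therefore runs for at least $|\mathcal A_d|/V(\delta d)$ steps, and the chosen points are pairwise at distance at least $\delta d$. So it is enough to show
\[
 \frac{(b-1)^{(1-\epsilon)d}}{V(\delta d)}\ge e^{hd}
\]
for suitable constants $\delta\in(0,1/2)$ and $h>0$.

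For the volume bound, I would use the standard entropy estimate $\sum_{j\le\delta d}\binom dj\le e^{dH(\delta)}$, valid for $\delta\le1/2$, where $H(\delta)=-\delta\log\delta-(1-\delta)\log(1-\delta)$. Together with $(b-1)^j\le(b-1)^{\delta d}$ this gives
\[
 \log V(\delta d)\le d\bigl(H(\delta)+\delta\log(b-1)\bigr).
\]
Since $b>2$, we have $\log(b-1)>0$. Because $\epsilon<1/2$, the target exponent $(1-\epsilon)\log(b-1)$ is at least $\tfrac12\log(b-1)>0$. As $H(\delta)+\delta\log(b-1)\to0$ when $\delta\downarrow0$, I can fix $\delta$ small enough that this quantity is at most $\tfrac14\log(b-1)$. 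Then $h=\tfrac14\log(b-1)$ works, and we obtain $|\mathcal C_d|\ge e^{hd}$ for all large $d$. Strictly speaking, the count $|\mathcal A_d|/V(\delta d)$ needs a ceiling, and the rounding of $\delta d$ needs care, but neither issue matters for large $d$.

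None of these steps is a real obstacle. The only points to check are that the entropy bound is applied in the range $\delta\le1/2$, and that the hypothesis $b>2$ is exactly what makes the exponent $(1-\epsilon)\log(b-1)$ strictly positive. For $b=2$ the set $\mathcal A_d$ could have a single element and the lemma would fail, which is consistent with the hypothesis $b>2$.
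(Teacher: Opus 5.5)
Your proof is correct and follows essentially the same route as the paper: greedy deletion of Hamming balls inside $\mathcal A_d$, combined with an entropy bound on the ball volume. The only cosmetic differences are that you use the summed bound $\sum_{j\le\delta d}\binom dj\le e^{dH(\delta)}$, which avoids the paper's extra $(d+1)$ factor from bounding term by term, and that you exploit $\epsilon<1/2$ to take the $\epsilon$-independent choice $h=\tfrac14\log(b-1)$.
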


\begin{proof}
For $0<x\le1/2$,
$$
 \frac{\mathrm d}{\mathrm d x}\bigl(-x\log x-(1-x)\log(1-x)\bigr)
 =\log\!\left(\frac{1-x}{x}\right)\ge0.
$$
Thus, for $0<\delta<1/2$ and $0\le j\le\delta d$, the entropy inequality \(\binom dj\le \exp(dH(\frac jd)),
~
H(x)=-x\log x-(1-x)\log(1-x)\)
gives
$$
 \binom dj(b-1)^j
 \le\exp\!\left(d\bigl(-\delta\log\delta
 -(1-\delta)\log(1-\delta)+\delta\log(b-1)\bigr)\right).
$$ Thus, a Hamming ball in the Hamming graph of
radius $\delta d$ has size at most
\begin{equation}\label{eq:packing1}
 \sum_{j=0}^{\lfloor\delta d\rfloor}
 \binom dj(b-1)^j
 \le(d+1)\exp\!\left(d\bigl(-\delta\log\delta
 -(1-\delta)\log(1-\delta)+\delta\log(b-1)\bigr)\right).
\end{equation}
Choose $\delta>0$ small such that
$$
 h:=\frac12\left((1-\epsilon)\log(b-1)
       +\delta\log\delta+(1-\delta)\log(1-\delta)
       -\delta\log(b-1)\right)>0.
$$
Select any element and delete its
radius-$\delta d$ ball.  By \eqref{eq:packing1}, each such operation removes a set of size at most the right-hand side of \eqref{eq:packing1}.
Therefore, for all sufficiently large $d$,
\[
 |\mathcal C_d|
 \ge\frac{(b-1)^{(1-\epsilon)d}}
 {(d+1)\exp(d(-\delta\log\delta-(1-\delta)\log(1-\delta)
                  +\delta\log(b-1)))}
 =\frac{e^{2hd}}{d+1}
 \ge e^{hd}.
\]
This gives the desired separation.
\end{proof}

\subsection{Proof of Lemma~\ref{lem:binary1}}\label{app:binary2}

\begin{proof}[Proof of Lemma~\ref{lem:binary1}]
By \eqref{eq:binary13},
$R-1=q_\star^{1-\lambda/2}=2q_\star/(1-q_\star)$.  Hence, the claim
$R-1<\lambda$ is equivalent to
$q_\star<\lambda/(2+\lambda)$.  Since
$q\mapsto q^{-\lambda/2}(1-q)$ is strictly decreasing on $(0,1)$ and
equals $2$ at $q_\star$, it is enough to show that its value at
$q=\lambda/(2+\lambda)$ is less than $2$, i.e., 
$(2+\lambda)^{1-\lambda/2}\lambda^{\lambda/2}>1$.
Indeed, $(2+\lambda)^{1-\lambda/2}\ge\sqrt2$ and
$\lambda^{\lambda/2}\ge e^{-1/(2e)}$, while $\sqrt2\,e^{-1/(2e)}>1$.

For the existence of $a$, it is enough to prove
\begin{equation}\label{eq:binary24}
 \frac{\chi}{\log R}\log\!\left(\frac{\chi}{\log R}\right)
 -\frac{\chi}{\log R}+1>\lambda
\end{equation}
whenever $0<R-1<\lambda<1$.  The function
$2x-(1+x)\log(1+x)$ vanishes at $x=0$ and has derivative
$1-\log(1+x)>0$ on $(0,1)$.  Taking $x=R-1$ shows that
$2(R-1)>R\log R$ and
$(\lambda+R-1)/\log R>R$ for $\lambda\ge R-1$.  For fixed $R$, the
difference between the two sides of \eqref{eq:binary24} is increasing in
$\lambda$, since its derivative is
\begin{align}
     \frac{\mathrm d}{\mathrm d\lambda}
 \left(
 \frac{\lambda+R-1}{\log R}
 \log\!\left(\frac{\lambda+R-1}{\log R}\right)
 -\frac{\lambda+R-1}{\log R}+1-\lambda
 \right)
 =\frac{\log\!\left(\frac{\lambda+R-1}{\log R}\right)}{\log R}-1>0.\label{eq:derivative2}
\end{align}
Thus, it suffices to consider $\lambda=R-1$.

Observe that the continuous extension of $x\mapsto x/\log(1+x)$, with value $1$ at
$x=0$, is concave on $[0,1]$.  Considering the segment between the endpoint
values therefore gives
$$
 \frac{2x}{\log(1+x)}
 \ge2+\left(\frac{2}{\log2}-2\right)x.
$$

By \eqref{eq:derivative2} and since the function $y\mapsto y\log y-y+1$ is increasing for $y>1$, we have for every
$0<x<\lambda<1$ 
\[
\begin{split}
 &\frac{\lambda+x}{\log(1+x)}
 \log\!\left(\frac{\lambda+x}{\log(1+x)}\right)
 -\frac{\lambda+x}{\log(1+x)}+1-\lambda\\
 &\quad\ge
 \frac{2x}{\log(1+x)}
 \log\!\left(\frac{2x}{\log(1+x)}\right)
 -\frac{2x}{\log(1+x)}+1-x\\
 &\quad\ge
 \left(2+\left(\frac2{\log2}-2\right)x\right)
 \log\!\left(2+\left(\frac2{\log2}-2\right)x\right)-\left(2+\left(\frac2{\log2}-2\right)x\right)+1-x.
\end{split}
\]
It is elementary to check that the right-hand side is decreasing and positive on $x\in[0,1]$.
Taking $x=R-1$ proves \eqref{eq:binary24}.  Since
$\chi/\log R>R>1$, continuity of $x\mapsto x\log x-x+1$ allows us to
choose $1<a<\chi/\log R$ with $a\log a-a+1>\lambda$.  The upper bound on
$a$ also gives $a\log R<\chi$.
\end{proof}


\subsection{A binomial sum upper bound}
\begin{lemma}\label{lem:shell1}
Fix $c,w,\lambda,\delta>0$ and choose $C_{\rm shell}$ so that
$\lambda\delta C_{\rm shell}>2$.  Then
\begin{equation}\label{eq:far5}
 \sum_{\ell>C_{\rm shell}\log\log d}^{d}
 \binom d\ell\exp\!\left(-cw e^{\lambda\delta\ell}\right)=o(1).
\end{equation}
\end{lemma}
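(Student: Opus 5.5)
We bound each summand by $\exp(T_\ell)$ with $T_\ell:=\ell\log(ed/\ell)-cwe^{\lambda\delta\ell}$, using $\binom d\ell\le(ed/\ell)^\ell\le e^{\ell\log(ed)}$. Then we show that $\sum_\ell e^{T_\ell}$ is small by splitting the range of $\ell$ at a threshold where $e^{\lambda\delta\ell}$ dominates $\ell\log(ed)$ by a comfortable margin. The key input is the choice $\lambda\delta C_{\rm shell}>2$. Write $\ell=C_{\rm shell}\log\log d+m$ with $m>0$. Then
\[
e^{\lambda\delta\ell}=(\log d)^{\lambda\delta C_{\rm shell}}e^{\lambda\delta m}\ge(\log d)^{2+\eta}e^{\lambda\delta m}
\]
for some fixed $\eta>0$.

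\emph{Small $\ell$.} Consider $C_{\rm shell}\log\log d<\ell\le L_d:=(\log d)^{1+\eta/2}$. Here $\ell\log(ed)\le 2(\log d)^{2+\eta/2}$, whereas $cwe^{\lambda\delta\ell}\ge cw(\log d)^{2+\eta}$. Hence $T_\ell\le -\tfrac{cw}{2}(\log d)^{2+\eta}$ for large $d$. There are at most $L_d$ such terms, so their total is at most $L_d\exp(-\tfrac{cw}2(\log d)^{2+\eta})=o(1)$.

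\emph{Large $\ell$.} Consider $\ell>L_d$. Now $e^{\lambda\delta\ell}$ grows exponentially in $\ell$, while $\ell\log(ed)$ grows only linearly with slope $\log(ed)$. Because $\ell\ge(\log d)^{1+\eta/2}$, we have $e^{\lambda\delta\ell}\ge e^{\lambda\delta\ell/2}e^{\lambda\delta(\log d)^{1+\eta/2}/2}$, and the second factor is superpolynomial in $d$. It follows that for large $d$, $cwe^{\lambda\delta\ell}\ge 2\ell\log(ed)+\ell$ uniformly in $\ell>L_d$. Indeed, $e^{\lambda\delta\ell/2}\ge1+\lambda\delta\ell/2$ is at least of order $\ell$, and the superpolynomial factor beats $\log(ed)/\lambda\delta$. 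Thus $T_\ell\le-\ell$, and the tail satisfies $\sum_{\ell>L_d}e^{-\ell}\le e^{-L_d}/(1-e^{-1})=o(1)$.

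Adding the two ranges proves \eqref{eq:far5}. The main obstacle is the transition region just above $C_{\rm shell}\log\log d$. There the binomial entropy $\ell\log d$ is of order $\log d\log\log d$ while the exponential term is only polylogarithmic, so the argument needs exactly the margin $\lambda\delta C_{\rm shell}>2$, which makes $e^{\lambda\delta\ell}\gg(\log d)^2\gg\ell\log d$. If that margin were only $>1$, the first range would still work provided $L_d$ is chosen with $\ell\log d\ll(\log d)^{\lambda\delta C_{\rm shell}}$. The constant $2$ simply gives room to take $L_d$ slightly larger than $\log d$, so that the crude linear-versus-exponential comparison in the second range applies cleanly.
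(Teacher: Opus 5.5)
Your proof is correct, but it organizes the comparison differently from the paper. The paper keeps the sharper bound $\binom d\ell\le(ed/\ell)^\ell$ and proves one inequality, $cwe^{\lambda\delta\ell}\ge2\ell\log(ed/\ell)$, on the whole range at once. Differentiating shows that $x\mapsto e^{\lambda\delta x}/(x\log(ed/x))$ is increasing on $[C_{\rm shell}\log\log d,d]$, and at the left endpoint this ratio is $\asymp(\log d)^{\lambda\delta C_{\rm shell}-1}/\log\log d\to\infty$. Hence each summand is at most $e^{-\ell\log(ed/\ell)}\le e^{-\ell}$. You instead coarsen to $e^{\ell\log(ed)}$ and split at $L_d=(\log d)^{1+\eta/2}$. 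Below $L_d$ you use a uniform bound times the number of terms; above it you compare exponential against linear growth, driven by the superpolynomial factor $e^{\lambda\delta L_d/2}$. Your route avoids calculus at the cost of two cases. The paper's monotonicity argument is shorter and shows that only $\lambda\delta C_{\rm shell}>1$ is used.

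On that point your closing commentary is slightly off. Near the bottom of the range $\ell\log d\asymp\log d\log\log d$, so $(\log d)^{\lambda\delta C_{\rm shell}}$ already dominates once $\lambda\delta C_{\rm shell}>1$. Your upper-range step also never needs $L_d>\log d$. It only needs $e^{\lambda\delta L_d/2}/\log d\to\infty$, which holds for $L_d=K\log\log d$ with $K>2/(\lambda\delta)$. So the margin $2$ is slack in both arguments, and this does not affect the validity of your proof.
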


\begin{proof}
The standard estimate $\binom d\ell\le(ed/\ell)^\ell$ gives the upper bound
\begin{align}
     \sum_{\ell>C_{\rm shell}\log\log d}^{d}
 \exp\!\left(\ell\log\!\left(\frac{ed}{\ell}\right)
 -cw e^{\lambda\delta\ell}\right).\label{eq:L1}
\end{align}
For $C_{\rm shell}\log\log d\le x\le d$ and $d$ large enough,
$$
 \frac{\mathrm d}{\mathrm d x}\log\!\left(
 \frac{e^{\lambda\delta x}}{x\log(\frac{ed}{x})}\right)
 =\lambda\delta-\frac1x+\frac1{x\log(\frac{ed}{x})}
 \ge\frac{\lambda\delta}{2}>0.
$$
Thus, this ratio is increasing on
$[C_{\rm shell}\log\log d,d]$, and at $x=C_{\rm shell}\log\log d$ we have
$$
 \frac{(\log d)^{\lambda\delta C_{\rm shell}}}
 {C_{\rm shell}\log\log d\,
  \log\!\left(\frac{ed}{C_{\rm shell}\log\log d}\right)}
 \asymp
 \frac{1}{C_{\rm shell}}
 \frac{(\log d)^{\lambda\delta C_{\rm shell}-1}}{\log\log d}
 \longrightarrow\infty.
$$
Consequently, for every integer $\ell>C_{\rm shell}\log\log d$ and all
sufficiently large $d$,
\begin{align}
    cw e^{\lambda\delta\ell}
 \ge2\ell\log\!\left(\frac{ed}{\ell}\right).\label{eq:L2}
\end{align}
Combining \eqref{eq:L1} and \eqref{eq:L2} proves \eqref{eq:far5}.
\end{proof}

\section{Simulations}\label{app:simulation1}

This appendix reports exact event-driven simulations of the cover time for
$b=2$ and $b=3$.  We take $\lambda=\log2$.  Solving
\eqref{eq:rate1} numerically gives
$x_\star=1.7019033$ for $b=3$.  For $b=2$, it gives $x_\star=1.1518307$ and
$\chi=0.9151038$.  Figure~\ref{fig:simulation1} shows the simulation results.

\begin{figure}[H]
\centering
\begin{tikzpicture}[font=\small]
\begin{scope}[x={(.34cm,0)},y={(0,.16cm)}]
  \foreach \y/\lab in {0/5,5/10,10/15,15/20,20/25,25/30,30/35}{
    \draw[gray!20] (0,\y)--(17.7,\y);
    \draw (0,\y)--(-.14,\y) node[left=1pt] {\scriptsize \lab};
  }
  \foreach \x/\lab in {1/6,3/8,5/10,7/12,9/14,11/16,13/18,15/20,17/22}{
    \draw (\x,0)--(\x,-.35) node[below=1pt] {\scriptsize \lab};
  }
  \draw[->] (0,0)--(17.8,0) node[right] {$d$};
  \draw[->] (0,0)--(0,30.8);
  \node[font=\bfseries] at (8.8,32.0) {$b=2$};

  \draw[gray!75,dashed,thick]
    plot coordinates {(1,1.9110) (3,4.2146) (5,6.5183) (7,8.8220)
                      (9,11.1256) (11,13.4293) (13,15.7330) (15,18.0366)
                      (16,19.1884) (17,20.3403)};
  \draw[orange!85!black,thick]
    plot coordinates {(1,2.5483) (3,5.0147) (5,7.4297) (7,9.8166)
                      (9,12.1861) (11,14.5437) (13,16.8928) (15,19.2356)
                      (16,20.4051) (17,21.5735)};
  \draw[blue!70!black,thick]
    plot coordinates {(1,3.8923) (3,6.4765) (5,8.9136) (7,11.1938)
                      (9,13.7201) (11,15.8131) (13,18.2701) (15,20.6047)
                      (16,21.4365) (17,23.8583)};
  \foreach \x/\lo/\mid/\hi in {
    1/2.1961/3.8923/6.7294,
    3/4.7166/6.4765/9.3407,
    5/7.2186/8.9136/11.5949,
    7/9.6202/11.1938/13.7123,
    9/11.8670/13.7201/16.6471,
    11/14.4139/15.8131/18.8725,
    13/16.5896/18.2701/21.0804,
    15/19.4171/20.6047/21.9410,
    16/20.2875/21.4365/23.5560,
    17/20.3754/23.8583/27.3786}{
      \draw[blue!70!black] (\x,\lo)--(\x,\hi);
      \draw[blue!70!black] (\x-.16,\lo)--(\x+.16,\lo);
      \draw[blue!70!black] (\x-.16,\hi)--(\x+.16,\hi);
      \fill[blue!70!black] (\x,\mid) circle[radius=1.7pt];
  }
\end{scope}

\begin{scope}[xshift=7.4cm,x={(.53cm,0)},y={(0,.16cm)}]
  \foreach \y/\lab in {0/5,5/10,10/15,15/20,20/25,25/30,30/35}{
    \draw[gray!20] (0,\y)--(11.4,\y);
    \draw (0,\y)--(-.08,\y) node[left=1pt] {\scriptsize \lab};
  }
  \foreach \x/\lab in {1/4,3/6,5/8,7/10,9/12,11/14}{
    \draw (\x,0)--(\x,-.35) node[below=1pt] {\scriptsize \lab};
  }
  \draw[->] (0,0)--(11.55,0) node[right] {$d$};
  \draw[->] (0,0)--(0,30.8);
  \node[font=\bfseries] at (5.7,32.0) {$b=3$};

  \draw[gray!75,dashed,thick]
    plot coordinates {(1,1.8076) (3,5.2114) (5,8.6152)
                      (7,12.0190) (8,13.7209) (9,15.4228)
                      (10,17.1247) (11,18.8266)};
  \draw[orange!85!black,thick]
    plot coordinates {(1,3.8076) (3,7.7964) (5,11.6152)
                      (7,15.3410) (8,17.1804) (9,19.0078)
                      (10,20.8252) (11,22.6340)};
  \draw[blue!70!black,thick]
    plot coordinates {(1,4.1741) (3,8.1068) (5,11.5386)
                      (7,15.3373) (8,16.8879) (9,19.0527)
                      (10,20.9615) (11,22.4828)};
  \foreach \x/\lo/\mid/\hi in {
    1/2.3747/4.1741/6.8507,
    3/6.4306/8.1068/10.7400,
    5/9.9230/11.5386/14.4045,
    7/13.6025/15.3373/17.5577,
    8/15.7012/16.8879/19.4627,
    9/17.6901/19.0527/20.7718,
    10/19.1463/20.9615/22.7821,
    11/21.1637/22.4828/24.6204}{
      \draw[blue!70!black] (\x,\lo)--(\x,\hi);
      \draw[blue!70!black] (\x-.10,\lo)--(\x+.10,\lo);
      \draw[blue!70!black] (\x-.10,\hi)--(\x+.10,\hi);
      \fill[blue!70!black] (\x,\mid) circle[radius=1.7pt];
  }
\end{scope}

\node[rotate=90] at (-.85,2.35) {cover time};
\begin{scope}[yshift=-.85cm,font=\scriptsize]
  \draw[blue!70!black] (.1,-.12)--(.1,.12);
  \draw[blue!70!black,thick] (-.25,0)--(.45,0);
  \fill[blue!70!black] (.1,0) circle[radius=1.7pt];
  \node[anchor=west] at (.55,0) {median and 10--90\%};
  \draw[orange!85!black,thick] (5.25,0)--(6.05,0);
  \node[anchor=west] at (6.15,0) {theorem centering};
  \draw[gray!75,dashed,thick] (10.35,0)--(11.15,0);
  \node[anchor=west] at (11.25,0) {linear term};
\end{scope}
\end{tikzpicture}
\caption{Cover time as a function of the dimension $d$ for $\lambda=\log 2$.  Dots are
empirical medians and vertical bars show the empirical $10^{\mathrm{th}}$ and $90^{\mathrm{th}}$
percentiles.  The solid orange curves are the centerings in
Theorems~\ref{thm:cover1} and~\ref{thm:cover2}; the dashed gray curves
are the leading terms $x_\star d$.}
\label{fig:simulation1}
\end{figure}

\end{document}